\newtheorem{theorem}{Theorem}
\newtheorem*{theorem*}{Theorem}
\newtheorem{lemma}{Lemma}
\newtheorem{proposition}{Proposition}
\newtheorem{corollary}{Corollary}
\theoremstyle{definition}
\newtheorem{remark}{Remark}
\numberwithin{equation}{section}
\newcommand{\R}{\ensuremath{\mathbf{R}}}
\newcommand{\defn}{\ensuremath{\overset{\mathrm{def}}{=}}}
\newcommand{\dd}{\ensuremath{\mathrm{d}}}
\newcommand{\ii}{\ensuremath{\mathrm{i}}}
\newcommand{\captionfonts}{\small}
\long\def\@makecaption#1#2{%
  \vskip\abovecaptionskip
  \sbox\@tempboxa{{\captionfonts #1: #2}}%
  \ifdim \wd\@tempboxa >\hsize
    {\captionfonts #1: #2\par}
  \else
    \hbox to\hsize{\hfil\box\@tempboxa\hfil}%
  \fi
  \vskip\belowcaptionskip}
\begin{document}
\title{\bf{Collections of Fluid Loaded Plates: \\
A Nonlocal Approach}}
\author{A.C.L Ashton\footnote{a.c.l.ashton@damtp.cam.ac.uk} \\
\\
Department of Applied Mathematics \\
and Theoretical Physics,\\
University of Cambridge, \\
CB3 0WA, UK.}
\maketitle

\begin{abstract}
We consider the motion of a collection of fluid loaded elastic plates, situated horizontally in an infinitely long channel. We use a new, unified approach to boundary value problems, introduced by A.S. Fokas in the late 1990's, and show the problem is equivalent to a system of 1-parameter integral equations. We give a detailed study of the linear problem, providing explicit solutions and well-posedness results in terms of standard Sobolev spaces. We show that the associated Cauchy problem is completely determined by a matrix, which depends solely on the mean seperation of the plates and the horizontal velocity of each of the driving fluids. This matrix corresponds to the infinitesimal generator of the $C_0$-semigroup for the evolution equations in Fourier space. By analysing the properties of this matrix, we classify necessary and sufficient conditions for which the problem is asymptotically stable.
\end{abstract}

\newpage
\section{Introduction}
In this paper we are concerned with the motion of a collection of elastic plates residing in a horizontal channel. Between each plate is a fluid with mean flow in the horizonal direction which drive the motion of the elastic plates. 

The physical significance of the problem is discussed in detail in \cite{jia2007cmb}, where the authors give both a theoretical and experimental study of the flapping modes of two elastic bodies of finite length. In this paper, following \cite{crighton1991flm}, we do not impose any external length scale. As such, the results are particularly applicable to cases in which the elastic bodies are long: an example being the motion of underwater cables.

At the linear level, many approaches to similar problems see authors choosing to work with the case in which the channel is infinite in \emph{both} the vertical and horizontal directions. This approach often serves as a good approximation to some physically interesting scenarios, and also simplifies the analysis if working with a combination of Laplace and Fourier transforms. The use of the Laplace transform is generally inappropriate for the following reason: (a) the plates are assumed thin and as such disturbances can travel at arbitrarily high speeds and (b) one cannot immediately rule out the existence of instabilities that grow exponentially in time. In this paper, a new unified approach to boundary value problems is employed \cite{fokas2008uab}. This approach is direct, and removes the need to invoke the Laplace transform.

We choose to work with the more physically relevant case in which the vertical extent of the channel is finite and avoid the use of the Laplace tranform. The problem is reformulated in terms of a spectral parameter $k$, and in the spectral space the evolution equations for the amplitutdes of the plates take a simplified form.

The problem we study is non-dimensionalised, with the only remaining parameter being the speed of the horizontal flow between each plate. Referring to \cite{crighton1991flm} for details of the re-scaling scheme, the dimensionaless parameter $U$ arises in the form:
\[ U = \left( \frac{ m_* ^{3/2}}{\rho_* B_*^{1/2}}\right) U_*. \]
Here $B_*$ is the bending stiffness of the plate, $m_*$ is the mass per unit length of the plate, $\rho_*$ is the density of the fluid and $U_*$ is the horizontal velocity of the flow. In practice, the parameter $U$ will be small and has served as a basis for asymptotic approaches to the problem in many papers. For instance, in \cite{peake2001nonlinear} the author gives the following example: if the plate is made of $2$cm steel, and the fluid is water, moving at around $10m s^{-1}$ (which can serve as an approximate upper limit), then $U\sim 0.05 \ll 1$.

We first work on a formal level to derive explicit solutions to the underlying problem, \emph{then} prove what properties these solutions have. Frequently we will refer to the standard Sobolev space $H^s_{\dd \nu}(\R^m)=W^{s,2}_{\dd \nu}(\R^m)$, where:
\[ W^{k,p}_{\dd  \nu}(\R^m) \defn \{ \partial^\alpha f\in  L^p_{\dd \nu}(\R^m), 0\leq |\alpha |\leq k \}.\] 
and $L^p_{\dd \nu}(\R^m)$ is the usual space of Lebesgue integrable functions on $\R^m$ with norm
\[ \left( \int |f|^p\, \dd \nu\right)^{1/p}.\]
The space of $k$ times continuously differentiable functions from $\mathbf{R}^m$ to $\mathbf{R}$ will be denoted $C^k(\mathbf{R}^m)$, and the Schwartz space $\mathcal{S}(\mathbf{R}^m)$ is defined to consist of smooth functions $f$ from $\mathbf{R}^m$ to $\mathbf{R}$ such that $\sup_x |x^\alpha \partial^\beta f|<\infty$ for all multi-indices $\alpha$, $\beta$.

\subsection{The Governing Equations}
The problem we study concerns the motion of a collection of $n$ elastic plates, lying horizontally, each driven by a mean flow with horizontal velocity $U_i$. We assume each plate can be described by a single valued function $\eta_i (x,t)$ and we denote the corresponding surface by $\Gamma_i$:
\begin{equation} \Gamma_i = \{ (x,y)\in \R ^2 : y=\eta_i (x,t)\}, \qquad t> 0 .\end{equation}
with upward normal $N(\Gamma_i)=(-\partial_x\eta_i,1)$. The plates are situated in a horiztonal channel $\Omega$, bound above and below by the surfacaes $\mathcal{B}^+$ and $\mathcal{B}^-$:
\begin{equation} \mathcal{B}^\pm = \{ (x,y) \in\R ^2: y= \pm h_0+ h_\pm(x)\},  \end{equation}
where $h_0>0$ and $h_\pm \in \mathcal{S}(\R )$. We define $\Omega_i$, $1\leq i \leq n-1$, to be the open, connected region bounds between the surfaces $\Gamma_i$ and $\Gamma_{i+1}$ so that:
\begin{equation} \Omega_i = \{ (x,y) \in \R ^2: \eta_i(x,t) < y < \eta_{i+1}(x,t)\}, \qquad t> 0. \end{equation}
The connectedness condition means that for $i>j$, we have $\eta_i(x,t)>\eta_j(x,t)$. In addition, we define the distinguished regions:
\begin{align}
\Omega_0 &= \{(x,y)\in\R ^2: -h_0 +h_-(x)<y<\eta_1(x,t)\}, \qquad t> 0, \\
\Omega_n &= \{(x,y)\in\R ^2: \eta_n(x,t)<y<+h_0 +h_+(x)\}, \qquad t> 0.
\end{align}
With these definitions it is clear that $\Omega = \cup_{i=0}^n \overline\Omega_{i}$. A diagram for the geometry of the problem is given in Figure \ref{diagram}.
\begin{figure}\label{diagram}
\begin{center}
\input{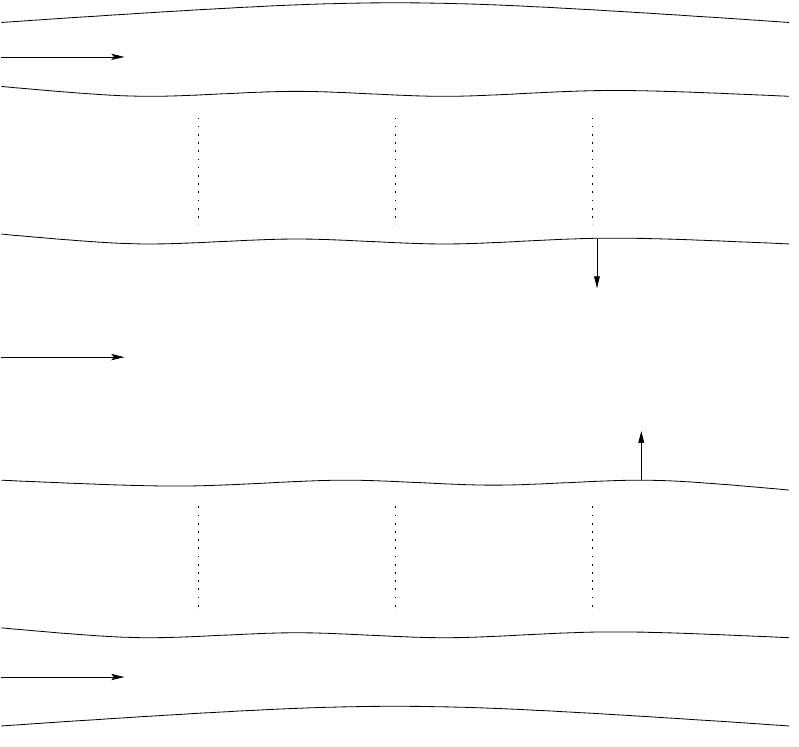_t}
\end{center}
\caption{The geometry of the problem.}
\end{figure}

Each $\Omega_i$ is occupied by an incompressible, irrotational fluid travelling with mean velocity $U_i$ in $x$-direction. We let $\phi_i(x,y,t)$ denote the velocity potential for the perturbation from the mean flow in $\Omega_i$, and as such:
\begin{equation} \Delta \phi_i =0 \qquad \textrm{in $\Omega_i$}, \end{equation}
which follows from the incompressibility condition. The evolution of the surface $\Gamma_i$ is determined by beam equation:
\begin{equation} \partial_t^2\eta_i + \partial_x^4 \eta_i = p_{i-1} - p_i \qquad \textrm{on $\Gamma_i$}, \label{beam_pair} \end{equation}
where $p_i=p_i(x,y,t)$ is the pressure of the fluid in $\Omega_i$. In addition, on the upper and lower sides of $\Gamma_i$ we have the Bernoulli conditions:
\begin{equation} \left. \begin{array}{rcl}
\partial_{U_{i}}\phi_{i} + \tfrac{1}{2} \|\nabla\!  \phi_i\|^2  &=& -p_i \\
\partial_{U_{i-1}}\phi_{i-1} + \tfrac{1}{2}\|\nabla\! \phi_{i-1}\|^2  &=& -p_{i-1} \end{array}\right\}\quad \textrm{on $\Gamma_i$}. \label{Bernoulli_pair} \end{equation}
where we have introduced the differential operator $\partial_U \equiv \partial_t + U\partial_x$ for convenience of notation. Using the pair of Bernoulli conditions in \eqref{Bernoulli_pair}, we can eliminate the pressure terms in \eqref{beam_pair} to find the single boundary condition on the surface $\Gamma_i$ in terms of the functions $\{\eta_i, \phi_i, \phi_{i-1}\}$:
\begin{equation} \partial_t^2\eta_i + \partial_x^4 \eta_i + \partial_{U_{i-1}}\phi_{i-1} - \partial_{U_i}\phi_i 
   + \tfrac{1}{2}\|\nabla\! \phi_{i-1}\|^2- \tfrac{1}{2} \|\nabla\!  \phi_i\|^2  =0 \qquad \textrm{on $\Gamma_i$.} \label{bernoullitotal}
\end{equation}

We also have two kinematic conditions on each $\Gamma_i$, since fluid particles initially on the surface must remain on the surface. 
\begin{equation}\label{kinematic_pair}
\left.\begin{array}{rcl}(U_{i-1} + \partial_x \phi_{i-1}, \partial_y\phi_{i-1}) \cdot N(\Gamma_i)\!\! &=&\!\! \partial_t \eta_i \\
\Big((U_{i-1} + \partial_x \phi_{i-1}, \partial_y\phi_{i-1})-(U_i + \partial_x \phi_i, \partial_y\phi_i)\Big) \cdot N(\Gamma_i) \!\! &=&\!\! 0 \end{array}\right\}\quad \textrm{on $\Gamma_i$}
\end{equation}
To summarise, the nonlinear hydrodynamic problem studied in this paper is governed by the following $n+1$ dynamic boundary value problems:
\subsubsection*{The internal domains $\Omega_i$, $1\leq i \leq n-1$:}
\vspace{-0.5cm}
\begin{subequations}\label{alleqns}
\begin{alignat}{2}\Delta\phi_i &= 0&\quad &\textrm{in $\Omega_i$,} \label{harmonic}\\
(U_i+\partial_x\phi_i, \partial_y\phi_i)\cdot N(\Gamma_i) &= \partial_t\eta_i &\quad &\textrm{on $\Gamma_i$,} \label{dynamicdown} \\
\partial_t^2\eta_i + \partial_x^4 \eta_i + \partial_{U_{i-1}}\phi_{i-1} + \tfrac{1}{2}\|\nabla\! \phi_{i-1}\|^2   &=\partial_{U_i}\phi_i+\tfrac{1}{2} \|\nabla\!  \phi_i\|^2&\quad &\textrm{on $\Gamma_i$,} \label{beamdown} \\
(U_i+\partial_x\phi_i, \partial_y\phi_i)\cdot N(\Gamma_{i+1}) &= \partial_t\eta_{i+1} &\quad &\textrm{on $\Gamma_{i+1}$,} \label{dynamicup} \\
\partial_t^2\eta_{i+1} + \partial_x^4 \eta_{i+1} + \partial_{U_{i}}\phi_{i} + \tfrac{1}{2}\|\nabla\! \phi_{i}\|^2   &=\partial_{U_{i+1}}\phi_{i+1}+\tfrac{1}{2} \|\nabla\!  \phi_{i+1}\|^2&\quad &\textrm{on $\Gamma_{i+1}$.} \label{beamup}
\end{alignat}
\end{subequations}
The fact that \emph{two} conditions are given on the upper and lower parts of $\partial\Omega_i$ is a consequence of the fact that the surfaces $\Gamma_i$ are unknown.
\subsubsection*{The top domain $\Omega_n$:}
\vspace{-0.5cm}
\begin{subequations}\label{alleqnsup}
\begin{alignat}{2}\Delta\phi_n &= 0&\quad &\textrm{in $\Omega_n$,} \label{harmonic-n}\\
(U_n+\partial_x\phi_n, \partial_y\phi_n)\cdot N(\Gamma_n) &= \partial_t\eta_n &\quad &\textrm{on $\Gamma_n$,} \label{dynamicdown-n} \\
\partial_t^2\eta_n + \partial_x^4 \eta_n + \partial_{U_{n-1}}\phi_{n-1} + \tfrac{1}{2}\|\nabla\! \phi_{n-1}\|^2   &=\partial_{U_n}\phi_i+\tfrac{1}{2} \|\nabla\!  \phi_n\|^2&\quad &\textrm{on $\Gamma_n$,} \label{beamdown-n} \\
(U_i+\partial_x\phi_i, \partial_y\phi_i)\cdot N(\mathcal{B}^+) &=0 &\quad &\textrm{on $\mathcal{B}^+$.} \label{neumann-n} 
\end{alignat}
\end{subequations}
Here only one boundary condition is given on $\mathcal{B}^-$, since this boundary is fixed and determined in terms of the given funciton $h_+(x)$. This condition is the usual Neumann boundary condition reflecting the fact that the fluid cannot escape through $\mathcal{B}^-$
\subsubsection*{The bottom domain $\Omega_0$:}
\vspace{-0.5cm}
\begin{subequations}\label{alleqnsdown}
\begin{alignat}{2}\Delta\phi_0 &= 0&\quad &\textrm{in $\Omega_0$,} \label{harmonic-0}\\
(U_0+\partial_x\phi_0, \partial_y\phi_0)\cdot N(\Gamma_1) &= \partial_t\eta_1 &\quad &\textrm{on $\Gamma_1$,} \label{dynamicdown-0} \\
\partial_t^2\eta_1 + \partial_x^4 \eta_1 + \partial_{U_{0}}\phi_{0} + \tfrac{1}{2}\|\nabla\! \phi_{0}\|^2   &=\partial_{U_1}\phi_1+\tfrac{1}{2} \|\nabla\!  \phi_1\|^2&\quad &\textrm{on $\Gamma_1$,} \label{beamup-0} \\
(U_0+\partial_x\phi_0, \partial_y\phi_0)\cdot N(\mathcal{B}^-) &=0 &\quad &\textrm{on $\mathcal{B}^-$.} \label{neumann-0} 
\end{alignat}
\end{subequations}
Again, only one boundary condition (Neumann) is given on the fixed surface $\mathcal{B}^-$, since this surface is given in terms of the known funciton $h_-(x)$.
\vspace{5mm}

In addition to each system of equations, we require that the amplitudes decay to zero at large $x$, and similarly we assume $\phi$ and its derivatives decay to zero for large $x$, uniformly in $y$. With this assumption the ``effective'' part of each $\partial\Omega_i$ is $\Gamma_i$ (bottom) and $\Gamma_{i+1}$ (top), since all the functions are zero on $\partial\Omega_i \setminus (\Gamma_i\cup \Gamma_{i+1})$.

\subsection{The New Dependent Coordinates}
In each of the boundary value problems listed above, it is clear that whatever the harmonic function $\phi_i$ is, it will be determined by the values of it and its derivatives on the boundary $\partial\Omega_i$. It is convenient then, to reformulate the problem in terms of the the potentials \emph{evaluated on} $\Gamma_i$. We do this by introducing the new functions $\xi^\pm_i(x,t)$:
\begin{subequations}
\begin{align}
\xi^+_i(x,t) &= \phi_i|_{\Gamma_{i+1}}, \\
\xi^-_i(x,t) &= \phi_i|_{\Gamma_i},
\end{align}
\end{subequations}
where $\phi|_{\Gamma_i}=\phi(x,\eta_i(x,t),t)$ etc. So $\xi^+(x,t)$ is the potential evaluated on the top of the domain $\Omega_i$ (i.e. $\Gamma_{i+1}$), and $\xi^-(x,t)$ is the potential evaluated on the bottom of the domain $\Omega_i$ (i.e. $\Gamma_{i}$). The chain rule gives rise to the following on $\Gamma_{i+1}$:
\begin{subequations}\label{chainrules(i+1)}
\begin{align}
\partial_x \xi^+_i &= (\partial_x \phi_i + \partial_x \eta_{i+1} \partial_y \phi_i)|_{\Gamma_{i+1}}, \\
\partial_t \xi^+_i &= (\partial_t \phi_i + \partial_t \eta_{i+1} \partial_y \phi_i)|_{\Gamma_{i+1}}, 
\end{align}
\end{subequations}
and similarly on $\Gamma_i$:
\begin{subequations}\label{chainrules(i)}
\begin{align}
\partial_x \xi^-_i &= (\partial_x \phi_i + \partial_x \eta_{i} \partial_y \phi_i)|_{\Gamma_i}, \label{chainrules(i)a}\\
\partial_t \xi^-_i &= (\partial_t \phi_i + \partial_t \eta_{i} \partial_y \phi_i)|_{\Gamma_i}.
\end{align}
\end{subequations}
It is possible to express all the derivatives of $\phi_i$ on the boundary $\partial\Omega_i$ in terms of these new functions. For example, \eqref{kinematic_pair} and \eqref{chainrules(i)} yield the following non-singular set of equations for the derivatives of $\phi_i$ on $\Gamma_i$:
\begin{equation}
\begin{bmatrix} -\partial_x\eta_i & 1 & 0 \\ 0 & \partial_t\eta_i & 1 \\ 1 & \partial_x\eta_i & 0 \end{bmatrix} \begin{bmatrix} \partial_x\phi_i \\ \partial_y \phi_i \\ \partial_t \phi_i \end{bmatrix} = \begin{bmatrix} \partial_{U_i}\eta_i \\ \partial_t \xi_i^- \\ \partial_x \xi_i^- \end{bmatrix} \qquad \textrm{on $\Gamma_i$.}
\end{equation}
A similar set of equations hold for the derivatives of $\phi_i$ on $\Gamma_{i+1}$. Solving these equations and using the notation $\langle X\rangle \equiv (1+X^2)^{1/2}$ gives:
\begin{equation}
\begin{bmatrix} \partial_x\phi_i \\ \partial_y \phi_i \\ \partial_t \phi_i \end{bmatrix} = \frac{1}{\langle \partial_x\eta_i\rangle ^2} \begin{bmatrix} \partial_x \xi^-_i - \partial_x \eta_i\partial_{U_i}\eta_i \\ \partial_x \xi^-_i \partial_x \eta_i + \partial_{U_i}\eta_i \\ \langle \partial_x \eta_i\rangle^2 \partial_t\xi^-_i - \partial_t\eta_i( \partial_{U_i}\eta_i +  \partial_x \eta_i \partial_x \xi^-_i) \end{bmatrix} \quad \textrm{on $\Gamma_i$,} \label{mess}
\end{equation}
and
\begin{equation}
\begin{bmatrix} \partial_x\phi_i \\ \partial_y \phi_i \\ \partial_t \phi_i \end{bmatrix} = \frac{1}{\langle \partial_x\eta_{i+1}\rangle ^2} \begin{bmatrix} \partial_x \xi^+_i - \partial_x \eta_{i+1}\partial_{U_i}\eta_{i+1} \\ \partial_x \xi^+_i \partial_x \eta_{i+1} + \partial_{U_i}\eta_{i+1} \\ \langle \partial_x \eta_{i+1}\rangle^2 \partial_t\xi^+_i - \partial_t\eta_{i+1}( \partial_{U_i}\eta_{i+1} + \partial_x \eta_{i+1} \partial_x \xi^+_i) \end{bmatrix} \quad \textrm{on $\Gamma_{i+1}$.} \label{mess2}
\end{equation}
These expressions indicate that it is enough to know the functions $\xi^\pm_i(x,t)$ to reproduce the potential $\phi_i(x,y,t)$ on $\Omega_i$. The problem now consists of determining each amplitutde $\eta_i(x,t)$ and the potentials each $\partial\Omega_i$. The total number of unknowns is:
\[ \textrm{(\# of amplitudes)} + \textrm{(\# of potentials on each $\Gamma_i$)}+ \textrm{(potentials on $\mathcal{B}^\pm$)} = 3n+2.  \]
Our aim is now to reduce our complex boundary value problem into a system of $3n+2$ equations for the unknowns. The Bernoulli condition \eqref{bernoullitotal} on each $\Gamma_i$ will give $n$ equations in terms of the $\{\xi_i^\pm\}$ and $\{\eta_i\}$, so we are left to produce $2(n+1)$ independent equations, i.e. two equations from each $\Omega_i$, so that the problem is well posed.

\section{The Non-Local Formulation}
In this section we present a non-local formulation of the problem that involves translating the harmonicity of each $\phi_i$, along with the dynamic (or Neumann) boundary conditions on $\partial\Omega_i$, into a pair of integro-differential equations in $\xi^{\pm}$ and $\eta_i$. This is achieved by constructing a so-called \emph{global relation} \cite{fokas1997utm,fokas2008uab,ashton2007fkf}. The global relation is a consequence of the following simple lemma.
\begin{lemma}\label{global_lem}
Suppose that the functions $u$ and $v$ are harmonic in $\Omega\subset \R ^2$. Then the following holds:
\begin{equation}
\partial_x \left(\partial_y u \, \partial_x v+ \partial_y v \, \partial_x u\right) + \partial_y \left( \partial_y u \, \partial_y v - \partial_x u \, \partial_x v\right)  = 0 \label{global_lem_eqn}
\end{equation}
for each $(x,y) \in \Omega$.
\end{lemma}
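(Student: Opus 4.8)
The plan is to prove the identity by a direct computation: expand the two divergence terms with the product rule and collect, then invoke the harmonicity of $u$ and $v$. First I would record the only regularity point that needs checking. Since $u$ and $v$ are harmonic on the open set $\Omega$, they are automatically $C^\infty(\Omega)$ (by elliptic regularity, or equivalently from the mean value property), so all the second-order partial derivatives appearing below exist, are continuous, and the mixed partials commute. With that in hand the computation is purely algebraic.

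Writing subscripts for partial derivatives, the product rule gives
\[
\partial_x\!\left(u_y v_x + v_y u_x\right) = u_{xy}v_x + u_y v_{xx} + v_{xy}u_x + v_y u_{xx},
\]
\[
\partial_y\!\left(u_y v_y - u_x v_x\right) = u_{yy}v_y + u_y v_{yy} - u_{xy}v_x - u_x v_{xy}.
\]
Adding these two expressions, the terms $u_{xy}v_x$ and $u_x v_{xy}$ each cancel against their negatives (here is where commutativity of mixed partials is used), leaving
\[
u_y\!\left(v_{xx}+v_{yy}\right) + v_y\!\left(u_{xx}+u_{yy}\right) = u_y\,\Delta v + v_y\,\Delta u,
\]
which vanishes identically on $\Omega$ because $\Delta u = \Delta v = 0$ there. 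This establishes \eqref{global_lem_eqn} pointwise on $\Omega$, completing the proof.

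I do not expect any genuine obstacle: the single subtlety is the interchange of mixed partials, and that is immediate from the smoothness of harmonic functions. It is worth noting in passing that the identity has a clean complex-variable reading, which is really why it is useful for constructing the global relation: if $u$ is harmonic then $p \defn u_x - \ii u_y$ is holomorphic (its real and imaginary parts satisfy the Cauchy--Riemann equations precisely because $u_{xx}+u_{yy}=0$ and $u_{xy}=u_{yx}$), and likewise $q \defn v_x - \ii v_y$; one checks that $(u_y v_y - u_x v_x) + \ii(u_y v_x + u_x v_y) = -\,pq$, so the vector field $\left(u_y v_x + u_x v_y,\; u_y v_y - u_x v_x\right)$ is the $(\operatorname{Im},\operatorname{Re})$ pair of the holomorphic function $-pq$, and its divergence vanishes by the Cauchy--Riemann equations. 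I would present the direct computation as the proof and mention this conjugate-function interpretation only as a remark.
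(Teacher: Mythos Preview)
Your proof is correct and follows exactly the paper's approach: expand the divergence by the product rule and observe that it collapses to $(\partial_y v)\Delta u + (\partial_y u)\Delta v$, which vanishes by harmonicity. Your added remarks on regularity and the holomorphic interpretation are sound but go beyond what the paper records.
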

\noindent
This can be verified by expanding out the left hand side of \eqref{global_lem_eqn} to find:
\[ (\partial_y v)\Delta u+ (\partial_y u) \Delta v \]
which vanishes in $\Omega$ if both $u$ and $v$ are harmonic in $\Omega$.

In what follows we shall be integrating expressions such as \eqref{global_lem_eqn} over each $\Omega_i$, and as such we need conditions on the functions $\{\eta_i,\phi_i\}$ and their derivatives to decay sufficiently rapidly as $|x|\rightarrow \infty$. At this stage we make the assumption, a priori, that the functions have the necessary conditions so that the integrals converge. It is possible to make these conditions precise using standard Sobolev estimates, but it is preferable at this stage to work on a formal level.

Suppose $k\in\R $ and define $v \in C^\infty(\R ^2)$ by:
\[ v(x,y)=\exp (-\ii kx +\kappa y), \qquad \kappa = \pm k. \]
Clearly $v$ is harmonic and bounded in each $\Omega_i$. It follows from an application of lemma \ref{global_lem} that the following holds in each $\Omega_i$:
\begin{equation} \partial_x \Big( e^{-\ii kx + \kappa y} (\kappa  \partial_x \phi_i-\ii k  \partial_y\phi_i )\Big) + \partial_y\Big( e^{-\ii kx + \kappa y}(\kappa  \partial_y\phi_i + \ii k \partial_x\phi_i)\Big)  = 0.\label{totdiv}\end{equation}
We now look at internal, top and bottom domains in $\Omega$ seperately.
\subsubsection*{The internal domain $\Omega_i$, $1\leq i \leq n-1$:}
We integrate \eqref{totdiv} about an internal domain $\Omega_i$. An application of the divergence theorem gives:
\begin{multline} \int_{\Gamma_i} e^{-\ii k x + \kappa y}(\kappa  \partial_x \phi_i-\ii k  \partial_y\phi_i ,\kappa  \partial_y\phi_i + \ii k \partial_x\phi_i)\cdot N(\Gamma_i)\, \dd x \\
- \int_{\Gamma_{i+1}}  e^{-\ii k x + \kappa y}(\kappa  \partial_x \phi_i-\ii k  \partial_y\phi_i ,\kappa  \partial_y\phi_i + \ii k \partial_x\phi_i)\cdot N(\Gamma_{i+1})\, \dd x =0, \label{divint}
\end{multline}
where we have discarded the contributions from $|x|\rightarrow \infty$, since the fields are assumed to vanish there. The contribution in \eqref{divint} from $\Gamma_i$ is given by:
\[ \int e^{-\ii k x + \kappa \eta_i}\Big(\kappa  \left( \partial_y\phi_i - \partial_x\eta_i \partial_x\phi_i  \right)|_{\Gamma_i} + \ii k \left( \partial_x\phi_i + \partial_x\eta_i \partial_y\phi_i\right)|_{\Gamma_i}\Big)\, \dd x. \]
We see that the first term in the integrand is $\partial_{U_i}\eta_i$ by \eqref{dynamicdown}, and the second is $\partial_x\xi^-_i$ by \eqref{chainrules(i)}. The expression becomes:
\begin{equation} \int e^{-\ii k x + \kappa \eta_i}\Big(\kappa  \partial_{U_i}\eta_i  + \ii k \partial_x\xi_i^- \Big)\, \dd x. \label{intcontrib(i)} \end{equation}
Performing a similar calculation, we find the contribution from $\Gamma_{i+1}$ to be:
\begin{equation} -\int e^{-\ii k x + \kappa \eta_{i+1}}\Big(\kappa  \partial_{U_i}\eta_{i+1} + \ii k \partial_x\xi_i^+ \Big)\, \dd x. \label{intcontrib(i+1)} \end{equation}
Combining equations \eqref{intcontrib(i)}, \eqref{intcontrib(i+1)} and \eqref{divint}, we find the following 1-parameter family of integrodifferential equations:
\begin{equation} \int e^{-\ii k x}\Big(e^{\kappa\eta_i} \left(\kappa  \partial_{U_i}\eta_i  + \ii k \partial_x\xi_i^- \right)- e^{\kappa \eta_{i+1}}\left(\kappa  \partial_{U_i}\eta_{i+1} + \ii k \partial_x\xi_i^+\right) \Big)\, \dd x=0. \label{globalint} \end{equation}
Recalling that $\kappa=\pm k$, \eqref{globalint} gives \emph{two} equations, which can be added and subtracted to give the following result.
\begin{proposition}\label{prop1}
The incomplete boundary value problem in \eqref{harmonic}, \eqref{dynamicdown} and \eqref{dynamicup} is equivalent to the the pair of integro-differential equations:
\begin{subequations}\label{integrals}
\begin{multline}
 \int e^{-\ii kx} \Big( \partial_{U_i}\eta_i \sinh (k\eta_i) + \ii \partial_x \xi_i^- \cosh (k\eta_i) \\ - \partial_{U_i} \eta_{i+1} \sinh(k\eta_{i+1}) - \ii \partial_x\xi_i^+\cosh(k\eta_{i+1}) \Big)\dd x =0, \label{int1}\end{multline}
and
\begin{multline}
 \int e^{-\ii kx} \Big( \partial_{U_i}\eta_i \cosh (k\eta_i) + \ii \partial_x \xi_i^- \sinh (k\eta_i) \\ - \partial_{U_i} \eta_{i+1} \cosh(k\eta_{i+1}) - \ii \partial_x\xi_i^+\sinh(k\eta_{i+1}) \Big)\dd x =0, \label{int2}
\end{multline}
which are valid for $k\in\R $. 
\end{subequations}
\end{proposition}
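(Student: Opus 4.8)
The plan is to split the claimed equivalence into its two implications, the first of which has essentially been carried out in the discussion preceding the statement.

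\emph{From the boundary value problem to \eqref{integrals}.} Assuming a solution $(\phi_i,\eta_i,\eta_{i+1})$ of \eqref{harmonic}, \eqref{dynamicdown}, \eqref{dynamicup} with the postulated decay, I would simply collect the computation already displayed: apply Lemma~\ref{global_lem} with $u=\phi_i$ and $v(x,y)=e^{-\ii kx+\kappa y}$, $\kappa=\pm k$, to get the divergence identity \eqref{totdiv}; integrate over $\Omega_i$ and use the divergence theorem, discarding the contributions from $|x|\to\infty$ by the decay hypotheses and noting that the only non-trivial parts of $\partial\Omega_i$ are $\Gamma_i$ and $\Gamma_{i+1}$; then rewrite the two boundary integrands using \eqref{dynamicdown}, \eqref{dynamicup} together with the chain rules \eqref{chainrules(i)}, \eqref{chainrules(i+1)}, which yields \eqref{globalint}. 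Finally, write \eqref{globalint} out for $\kappa=+k$ and $\kappa=-k$, divide each through by $k$ (the value $k=0$ then following by continuity of the integrals in $k$), and add and subtract the two resulting identities using $e^{\pm k\eta}=\cosh(k\eta)\pm\sinh(k\eta)$; after dividing out the overall factor of $2$ this is precisely the pair \eqref{int1}, \eqref{int2}.

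\emph{From \eqref{integrals} to the boundary value problem.} First reverse the elementary linear algebra of the last step: the pair \eqref{int1}, \eqref{int2} is an invertible linear combination of the $\kappa=\pm k$ instances of \eqref{globalint}, so \eqref{integrals} implies \eqref{globalint} for all real $k$, and hence — both sides being restrictions to $\R$ of entire functions of $k$ under the assumed decay — for all $k\in\mathbf{C}$. Let $\phi_i$ be the unique bounded harmonic function on $\Omega_i$ with Dirichlet data $\xi_i^-$ on $\Gamma_i$ and $\xi_i^+$ on $\Gamma_{i+1}$; existence, uniqueness and decay are standard for this geometry, and this already gives \eqref{harmonic}. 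Its Neumann data, transported through \eqref{chainrules(i)}, \eqref{chainrules(i+1)}, defines functions $m_i$ on $\Gamma_i$ and $m_{i+1}$ on $\Gamma_{i+1}$; the forward implication applied to this $\phi_i$ shows that $(\xi_i^\pm,m_i,m_{i+1})$ satisfy \eqref{globalint}. Subtracting this from the version of \eqref{globalint} in which $m_i,m_{i+1}$ are replaced by $\partial_{U_i}\eta_i,\partial_{U_i}\eta_{i+1}$ (which is what \eqref{integrals} furnishes) leaves a homogeneous global relation for $w_i:=m_i-\partial_{U_i}\eta_i$ and $w_{i+1}:=m_{i+1}-\partial_{U_i}\eta_{i+1}$, valid for all $k\in\mathbf{C}$. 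The connectedness hypothesis $\eta_{i+1}>\eta_i$ makes the weights $e^{\kappa\eta_i}$ and $e^{\kappa\eta_{i+1}}$ separate as $\operatorname{Im}k\to\pm\infty$, and tracking the growth of the two $\kappa=\pm k$ relations in that limit forces $w_i\equiv w_{i+1}\equiv 0$; these are exactly \eqref{dynamicdown} and \eqref{dynamicup}. Alternatively one can invoke directly the fact from the Fokas-transform theory \cite{fokas1997utm,fokas2008uab,ashton2007fkf} that the global relation, holding for all $k$, characterises compatibility of Dirichlet and Neumann data for $\Delta\phi_i=0$ on $\Omega_i$, and reconstruct $\phi_i$ from the associated inversion formula.

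\emph{Main obstacle.} I expect the forward direction and the algebraic passage to \eqref{int1}, \eqref{int2} to be routine; the work lies in the converse. Turning ``\eqref{globalint} holds for all $k$'' into the pointwise kinematic conditions needs uniqueness of the harmonic extension together with a careful asymptotic analysis of the relation satisfied by $w_i,w_{i+1}$ (or, equivalently, the Fokas inversion), and both of these — as well as the justification of the divergence theorem and the vanishing of the boundary terms at infinity — require the decay and regularity of $\eta_i,\phi_i$ that the paper has deliberately left unspecified at this formal stage.
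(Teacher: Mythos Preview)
Your forward direction is exactly the paper's argument: the derivation of \eqref{globalint} from Lemma~\ref{global_lem}, the divergence theorem, and the rewriting via \eqref{dynamicdown}, \eqref{dynamicup}, \eqref{chainrules(i)}, \eqref{chainrules(i+1)} is precisely what appears in the text immediately preceding Proposition~\ref{prop1}, and the add/subtract step with $\kappa=\pm k$ is the one-line manipulation the paper indicates.

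Where you differ is the converse. The paper gives no argument for it at all: it derives \eqref{globalint} from the boundary value problem and then simply states the proposition with the word ``equivalent'', working at the formal level announced earlier. Your proposal therefore does more than the paper. The strategy you outline --- solve the Dirichlet problem for $\phi_i$ with data $\xi_i^\pm$, run the forward computation with the true Neumann data, subtract to obtain a homogeneous global relation for the defects $w_i,w_{i+1}$, and then force $w_i=w_{i+1}=0$ --- is the right shape of argument. The one place to be careful is the ``separation as $\operatorname{Im}k\to\pm\infty$'' step: the pointwise ordering $\eta_{i+1}>\eta_i$ does make $e^{\kappa\eta_{i+1}}$ dominate $e^{\kappa\eta_i}$, but this sits inside an $x$-integral against $e^{-\ii kx}$, and for complex $k$ that integral need not converge without further control; a clean version of this step really amounts to the Fokas-transform uniqueness statement you cite as the alternative. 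Since the paper is explicitly working formally and defers all such analytic justification, either of your routes is acceptable at the level the paper operates, and your alternative appeal to \cite{fokas1997utm,fokas2008uab,ashton2007fkf} is in fact what the paper is implicitly leaning on.
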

\begin{remark}
We note that if both $\eta_{i+1}$ and $\eta_i$ are small, in an appropriate sense, then \eqref{int1} and the completeness of the Fourier transform yield $\partial_x\xi^+ \sim \partial_x\xi^-$. This is consistent with what one would expect, because over short length scales the speed of the flow in the $x$-direction would remain approximately constant in the absense of viscosity.
\end{remark}
\subsubsection*{The top domain $\Omega_n$:}
Integrating \eqref{totdiv} over $\Omega_n$ and using the divergence theorem we find:
\begin{multline} \int_{\Gamma_n} e^{-\ii k x + \kappa y}(\kappa  \partial_x \phi_i-\ii k  \partial_y\phi_i ,\kappa  \partial_y\phi_i + \ii k \partial_x\phi_i)\cdot N(\Gamma_n)\, \dd x \\
- \int_{\mathcal{B}^+}  e^{-\ii k x + \kappa y}(\kappa  \partial_x \phi_i-\ii k  \partial_y\phi_i ,\kappa  \partial_y\phi_i + \ii k \partial_x\phi_i)\cdot N(\mathcal{B}^+)\, \dd x =0. \label{divint_top}
\end{multline}
In analogy with \eqref{intcontrib(i)} we find the contribution from $\Gamma_n$ to be:
\begin{equation} \int e^{-\ii k x + \kappa \eta_n}\Big(\kappa  \partial_{U_n}\eta_n  + \ii k \partial_x\xi_n^- \Big)\, \dd x. \label{intcontrib(n)}\end{equation}
The contribution from $\mathcal{B}^+$, however, takes a simpler form owing to the fact that is boundary is fixed. Using the Neumann condition \eqref{neumann-n} with $N(\mathcal{B}^+)=(-\partial_xh^+,1)$ and the definition $\xi^+_n(x,t)=\phi(x,h_0+h_+(x),t)$ we find the contribution from $\mathcal{B}^+$ to be:
\begin{equation} -\ii k \int e^{-\ii k x + \kappa (h_0+h_+) } \partial_x \xi^+_n\, \dd x .\label{intcontrib(n+1)}\end{equation}
Combining \eqref{intcontrib(n)} and \eqref{intcontrib(n+1)} we find the following 1-parameter family of integro-differential equations:
\[ \int e^{-\ii k x}\Big(e^{\kappa\eta_n} \left(\kappa  \partial_{U_n}\eta_n  + \ii k \partial_x\xi_n^- \right)- \ii k e^{\kappa (h_0+h_+)}  \partial_x\xi_n^+ \Big)\, \dd x=0. \]
In analogy with Proposition \ref{prop1} we arrive at the following.
\begin{proposition}\label{prop2}
The incomplete boundary value problem in \eqref{harmonic-n}, \eqref{dynamicdown-n} and \eqref{neumann-n} is equivalent to the the pair of integro-differential equations:
\begin{subequations}\label{integrals-n}
\begin{equation}
 \int e^{-\ii kx} \Big( \partial_{U_n}\eta_n \sinh (k\eta_n) + \ii \partial_x \xi_n^- \cosh (k\eta_n) - \ii \partial_x\xi_n^+\cosh(k(h_0+h_+)) \Big)\dd x =0, \label{int1-n}\end{equation}
and
\begin{equation}
 \int e^{-\ii kx} \Big( \partial_{U_n}\eta_n \cosh (k\eta_n) + \ii \partial_x \xi_n^- \sinh (k\eta_n) - \ii \partial_x\xi_n^+\sinh(k(h_0+h_+)) \Big)\dd x =0, \label{int2-n}\end{equation}
which are valid for $k\in\R $. 
\end{subequations}
\end{proposition}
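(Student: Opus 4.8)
The plan is to prove the two implications separately, in close analogy with Proposition~\ref{prop1}; the only structural change is that the second moving interface $\Gamma_{i+1}$ appearing there is here replaced by the fixed wall $\mathcal{B}^+$, on which the Neumann condition \eqref{neumann-n} takes the place of the kinematic condition \eqref{dynamicup}.

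For the forward implication I would proceed exactly as in the computation preceding the statement. Assuming $\phi_n$ is harmonic in $\Omega_n$ and satisfies \eqref{dynamicdown-n} on $\Gamma_n$ together with \eqref{neumann-n} on $\mathcal{B}^+$, take the harmonic test function $v(x,y)=e^{-\ii kx+\kappa y}$ with $\kappa=\pm k$ and $k\in\R$; Lemma~\ref{global_lem} turns \eqref{global_lem_eqn} into the divergence identity \eqref{totdiv}, and integrating over $\Omega_n$ and applying the divergence theorem gives \eqref{divint_top}, the far-field contributions being discarded by the decay hypotheses. On $\Gamma_n$ one substitutes the kinematic condition \eqref{dynamicdown-n} and the chain rule \eqref{chainrules(i)} to identify the integrand with \eqref{intcontrib(n)}; on $\mathcal{B}^+$ one substitutes the Neumann condition \eqref{neumann-n} together with $\partial_x\xi_n^+=(\partial_x\phi_n+\partial_x h_+\,\partial_y\phi_n)|_{\mathcal{B}^+}$ to reduce the wall integral to \eqref{intcontrib(n+1)}. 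Taking $\kappa=+k$ and $\kappa=-k$, dividing through by $k$, and forming the sum and the difference of the two resulting scalar identities then yields \eqref{int1-n} and \eqref{int2-n}; every integral converges by the standing decay and Sobolev assumptions on $\eta_n$, $\xi_n^\pm$ and $h_+$.

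For the converse, suppose a triple $(\eta_n,\xi_n^-,\xi_n^+)$ satisfies \eqref{int1-n}--\eqref{int2-n} for all $k\in\R$. Recombining these recovers the single one-parameter global relation (the displayed identity with $\kappa=\pm k$) from which the proposition was read off. I would then take $\phi_n$ to be the unique decaying harmonic function in $\Omega_n$ with Dirichlet data $\xi_n^-$ on $\Gamma_n$ and $\xi_n^+$ on $\mathcal{B}^+$ --- solvability and uniqueness being standard elliptic theory for this graph-type domain under the decay and Sobolev hypotheses --- so that \eqref{harmonic-n} holds and $\phi_n$ is the only candidate. It remains to verify \eqref{dynamicdown-n} and \eqref{neumann-n}. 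Because $\phi_n$ is harmonic it automatically satisfies its own global relation (Lemma~\ref{global_lem} together with the divergence theorem), in which the $\Gamma_n$- and $\mathcal{B}^+$-integrands carry the \emph{true} normal-velocity data of $\phi_n$ in place of $\partial_{U_n}\eta_n$ and the prescribed wall flux. Comparing this automatic relation with the assumed one, and inverting the resulting transform against the exponential kernels $e^{-\ii kx}e^{\kappa\eta_n}$ and $e^{-\ii kx}e^{\kappa(h_0+h_+)}$, $\kappa=\pm k$, $k\in\R$ --- whose completeness on the relevant space of decaying functions is the spectral-transform ingredient of the Fokas method, cf.~\cite{fokas2008uab,ashton2007fkf} --- forces the true data of $\phi_n$ to coincide with the prescribed data, i.e.\ \eqref{dynamicdown-n} holds on $\Gamma_n$ and \eqref{neumann-n} on $\mathcal{B}^+$.

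The main obstacle is precisely this comparison-and-inversion step: showing that the pair of one-parameter families carries enough information to recover one boundary condition on \emph{each} of the two components $\Gamma_n$ and $\mathcal{B}^+$. This is not a pointwise Fourier inversion --- a fixed $k$ yields only a single scalar constraint --- but rests on the injectivity of the spectral representation restricted to functions of the assumed decay and regularity, which is exactly what the informal decay hypotheses in the text stand in for. A secondary technical point is the treatment of the curved, \emph{a priori} unknown surface $\Gamma_n$, where the weights $e^{\kappa\eta_n}$ are non-constant; one controls the associated integrals using $\langle\partial_x\eta_n\rangle$ as in \eqref{mess} and the smallness and decay of $\eta_n$.
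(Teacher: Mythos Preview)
Your forward implication is exactly the paper's argument: apply Lemma~\ref{global_lem} with $v=e^{-\ii kx+\kappa y}$, integrate over $\Omega_n$, use \eqref{dynamicdown-n} and \eqref{chainrules(i)} on $\Gamma_n$ and \eqref{neumann-n} together with the chain rule on $\mathcal{B}^+$ to obtain \eqref{intcontrib(n)} and \eqref{intcontrib(n+1)}, then take $\kappa=\pm k$ and form sum and difference. The paper in fact stops there --- it states Proposition~\ref{prop2} ``in analogy with Proposition~\ref{prop1}'' immediately after this computation and gives no separate converse argument, working throughout at the formal level announced earlier in the section.

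Your converse sketch therefore goes beyond what the paper supplies. The strategy --- solve the Dirichlet problem for $\phi_n$ with data $\xi_n^\pm$, write down the \emph{automatic} global relation it satisfies, subtract from the assumed one, and appeal to completeness of the exponential kernels to force the normal-derivative data to match --- is the standard way one justifies the ``equivalence'' claim in the Fokas framework, and you correctly flag the injectivity of the spectral map as the genuine analytic content. That step is not proved in the paper either; it is subsumed under the blanket a~priori decay and regularity hypotheses. So your proposal is correct and, on the forward direction, identical to the paper; on the converse you have simply made explicit what the paper leaves implicit.
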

\begin{remark}\label{flat_rem1}
We see that if $\mathcal{B}^+$ is flat, i.e. $h_+\equiv 0$, then we can eliminate $\xi^+_n(x,t)$ from equations \eqref{integrals-n}. Multiplying \eqref{int1-n} by $\sinh (kh_0)$, \eqref{int2-n} by $\cosh (kh_0)$ and subtracting yields the single global relation:
\begin{equation}
\int e^{-\ii k x} \Big( \partial_{U_n} \eta_n \cosh (k(\eta_n - h_0)) + \ii k \partial_x \xi^-_n \sinh (k(\eta_n-h_0)) \Big)\dd x =0
\end{equation}
\end{remark}

\subsubsection*{The bottom domain $\Omega_0$:}
The derivation of the results for the bottom domain is analogous to the previous calculation for the top domain.
\begin{proposition}\label{prop3}
The incomplete boundary value problem in \eqref{harmonic-0}, \eqref{dynamicdown-0} and \eqref{neumann-0} is equivalent to the the pair of integro-differential equations:
\begin{subequations}\label{integrals-0}
\begin{equation}
 \int e^{-\ii kx} \Big( \partial_{U_0}\eta_1 \sinh (k\eta_1) + \ii \partial_x \xi_0^+ \cosh (k\eta_1) - \ii \partial_x\xi_0^-\cosh(k(h_0-h_-)) \Big)\dd x =0, \label{int1-0}\end{equation}
and
\begin{equation}
 \int e^{-\ii kx} \Big( \partial_{U_0}\eta_1 \cosh (k\eta_1) + \ii \partial_x \xi_0^+ \sinh (k\eta_1) + \ii \partial_x\xi_0^-\sinh(k(h_0-h_-)) \Big)\dd x =0, \label{int2-0}\end{equation}
which are valid for $k\in\R $. 
\end{subequations}
\end{proposition}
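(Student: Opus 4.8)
The plan is to run the proof of Proposition~\ref{prop2} in mirror image, with the rigid wall now forming the \emph{lower} rather than the upper side of the domain. First I would take $v(x,y)=e^{-\ii kx+\kappa y}$ with $\kappa=\pm k$ and $k\in\R$, which is harmonic and bounded on $\Omega_0$, so that Lemma~\ref{global_lem} gives the total-divergence identity \eqref{totdiv} for $\phi_0$ throughout $\Omega_0$. Integrating this over $\Omega_0$ and applying the divergence theorem, the contributions from $|x|\to\infty$ drop out by the assumed decay of $\eta_1$, $\phi_0$ and their derivatives, leaving only line integrals over the upper boundary $\Gamma_1$ and the lower boundary $\mathcal{B}^-$.

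Next I would evaluate those two boundary terms. On $\Gamma_1=\{y=\eta_1\}$, with $N(\Gamma_1)=(-\partial_x\eta_1,1)$, the normal component of the relevant vector field is $e^{-\ii kx+\kappa\eta_1}\big[\kappa(\partial_y\phi_0-\partial_x\eta_1\,\partial_x\phi_0)+\ii k(\partial_x\phi_0+\partial_x\eta_1\,\partial_y\phi_0)\big]\big|_{\Gamma_1}$; by the kinematic condition \eqref{dynamicdown-0} the first bracket is $\partial_{U_0}\eta_1$ and by the chain rule \eqref{chainrules(i+1)} with $i=0$ the second is $\partial_x\xi_0^+$, so this term becomes $\int e^{-\ii kx+\kappa\eta_1}\big(\kappa\,\partial_{U_0}\eta_1+\ii k\,\partial_x\xi_0^+\big)\,\dd x$. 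On $\mathcal{B}^-=\{y=-h_0+h_-(x)\}$, with $N(\mathcal{B}^-)=(-\partial_x h_-,1)$, the rigid-wall condition \eqref{neumann-0} kills the normal-velocity part, while the tangential part is $\partial_x\xi_0^-$ by the definition $\xi_0^-(x,t)=\phi_0(x,-h_0+h_-(x),t)$ together with the chain rule; since $h_-\in\mathcal{S}(\R)$ I would then integrate by parts exactly as was done for $\mathcal{B}^+$ in Proposition~\ref{prop2}, reducing this term to $\ii k\int e^{-\ii kx+\kappa(-h_0+h_-)}\,\partial_x\xi_0^-\,\dd x$.

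Equating the two contributions produces the one-parameter family
\[ \int e^{-\ii kx}\Big(e^{\kappa\eta_1}\big(\kappa\,\partial_{U_0}\eta_1+\ii k\,\partial_x\xi_0^+\big)-\ii k\,e^{\kappa(-h_0+h_-)}\,\partial_x\xi_0^-\Big)\,\dd x=0, \]
for each $k\in\R$ and each $\kappa=\pm k$. Dividing by $k$, writing out the two sign choices, and then adding and subtracting them, the identities $e^{k\eta_1}+e^{-k\eta_1}=2\cosh(k\eta_1)$ and $e^{k\eta_1}-e^{-k\eta_1}=2\sinh(k\eta_1)$ (and the same with $h_0-h_-$ in place of $\eta_1$) collapse the exponentials into hyperbolic functions and give exactly \eqref{int1-0} and \eqref{int2-0}. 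I expect the only step demanding genuine care in this direction to be the sign bookkeeping: because $\mathcal{B}^-$ lies at $y=-h_0+h_-$ rather than at $y=+h_0+h_+$, the exponent $\kappa(-h_0+h_-)=\mp k(h_0-h_-)$ carries the opposite sign to the exponent $\kappa(h_0+h_+)=\pm k(h_0+h_+)$ occurring in the top domain, and this is precisely what makes the $\sinh\big(k(h_0-h_-)\big)$ term in \eqref{int2-0} come with a $+$ sign where \eqref{int2-n} carries a $-$; keeping the orientation of $N(\mathcal{B}^-)$ and the placement of $\Gamma_1$ as the \emph{upper} boundary consistent with one another is the thing most likely to go wrong.

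For the converse I would argue as in Propositions~\ref{prop1} and \ref{prop2}: completeness (injectivity) of the Fourier transform forces validity of \eqref{int1-0}--\eqref{int2-0} for all real $k$ to be equivalent to the traces $\{\partial_x\xi_0^\pm,\partial_{U_0}\eta_1\}$ being the boundary data of a single function harmonic in $\Omega_0$ and satisfying the Neumann condition on $\mathcal{B}^-$ together with the kinematic condition on $\Gamma_1$, after which the reconstruction formulas \eqref{mess}--\eqref{mess2} recover $\phi_0$ itself. Since the whole computation is a transcription of the one carried out for $\Omega_n$, I do not anticipate any essentially new difficulty; the only genuinely delicate points, should one wish to leave the formal level, are justifying the vanishing of the far-field terms and the integration by parts on $\mathcal{B}^-$ — this is where the Schwartz decay of $h_-$ and Sobolev control on $\eta_1,\phi_0$ enter — and making the reverse implication precise, exactly as in the earlier propositions.
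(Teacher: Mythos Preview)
Your proposal is correct and follows the paper's approach exactly: the paper's own proof consists of the single sentence ``The derivation of the results for the bottom domain is analogous to the previous calculation for the top domain,'' and you have carried out that analogous calculation in full, including the sign bookkeeping arising from $\kappa(-h_0+h_-)=\mp k(h_0-h_-)$ that produces the $+\ii\,\partial_x\xi_0^-\sinh(k(h_0-h_-))$ term in \eqref{int2-0}. One small note: the paper's handling of $\mathcal{B}^+$ before Proposition~\ref{prop2} applies the Neumann condition and chain rule directly rather than via an integration by parts, so that phrase in your write-up slightly misattributes the mechanism, though the outcome is the same.
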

\begin{remark}\label{flat_rem2}
In analogy with the result in Remark \ref{flat_rem1}, we note that in the case $\mathcal{B}_-$ flat, we can eliminate $\xi^-_0(x,t)$ from equations \eqref{integrals-0}. Multiplying \eqref{int1-0} by $\sinh (kh_0)$, \eqref{int2-0} by $\cosh (kh_0)$ and adding yields the single global relation:
\begin{equation}
\int e^{-\ii k x} \Big( \partial_{U_0} \eta_1 \cosh (k(\eta_1 + h_0)) + \ii k \partial_x \xi^+_0 \sinh (k(\eta_1+h_0)) \Big)\dd x =0
\end{equation}
\end{remark}

The non-local formulation of the problem has given two \emph{explicit} equations in each $\Omega_i$, $0\leq i\leq n$ for the functions $\{\xi_i^\pm\}$. Together with the $n$ Bernoulli-type conditions, we have:
\[ 2n+2 + n \equiv 3n+2 \]
equations for the $3n+2$ unknowns, so our problem is well-posed in this sense. The important feature of our $3n+2$ equations is that they are \emph{explicit} in the functions $\{\eta_i, \xi_i^\pm\}$, which makes analysis of them much simpler. For instance, they allow direct use of perturbation expansions, direct application of standard Sobolev estimates and direct application of standard asymptotic techniques -- we refer to \cite{ashton2008nfr} for examples of such techniques used in a related non-local formulation for rotational water waves.

We end this section with the following result, which summarises the non-local formulation for the dynamic boundary value problem outlined in \eqref{alleqns}, \eqref{alleqnsup} and \eqref{alleqnsdown} along with the $n$ bernoulli-type condtions on the $\{\Gamma_i\}_{i=1}^n$ in terms of $\{\xi_i^\pm\}_{i=0}^n$.
\begin{proposition}\label{summary_prop}
The solution to the boundary value problem in \eqref{alleqns}, \eqref{alleqnsup} and \eqref{alleqnsdown} is completely determined by the $3n+2$ functions $\{\eta_i\}_{i=1}^n$, $\{\xi_i^\pm\}_{i=0}^n$ which satisfy:
\begin{itemize}
 \item The $2n-2$ non-local \textbf{internal equations}:
\begin{multline}
 \int e^{-\ii kx} \Big( \partial_{U_i}\eta_i \sinh (k\eta_i) + \ii \partial_x \xi_i^- \cosh (k\eta_i) \\ - \partial_{U_i} \eta_{i+1} \sinh(k\eta_{i+1}) - \ii \partial_x\xi_i^+\cosh(k\eta_{i+1}) \Big)\dd x =0, \label{summaryint1}\end{multline}
\begin{multline}
 \int e^{-\ii kx} \Big( \partial_{U_i}\eta_i \cosh (k\eta_i) + \ii \partial_x \xi_i^- \sinh (k\eta_i) \\ - \partial_{U_i} \eta_{i+1} \cosh(k\eta_{i+1}) - \ii \partial_x\xi_i^+\sinh(k\eta_{i+1}) \Big)\dd x =0, \label{summaryint2}
\end{multline}
which are valid for $k\in \R $ and $1\leq i \leq n-1$.
 \item The $4$ non-local \textbf{external equations}:
\begin{subequations}\label{summaryintegrals}
\begin{equation}
 \int e^{-\ii kx} \Big( \partial_{U_n}\eta_n \sinh (k\eta_n) + \ii \partial_x \xi_n^- \cosh (k\eta_n) - \ii \partial_x\xi_n^+\cosh(k(h_0+h_+)) \Big)\dd x =0, \label{summaryint1-n}\end{equation}
\begin{equation}
 \int e^{-\ii kx} \Big( \partial_{U_n}\eta_n \cosh (k\eta_n) + \ii \partial_x \xi_n^- \sinh (k\eta_n) - \ii \partial_x\xi_n^+\sinh(k(h_0+h_+)) \Big)\dd x =0, \label{summaryint2-n}\end{equation}
\begin{equation}
 \int e^{-\ii kx} \Big( \partial_{U_0}\eta_1 \sinh (k\eta_1) + \ii \partial_x \xi_0^+ \cosh (k\eta_1) - \ii \partial_x\xi_0^-\cosh(k(h_0-h_-)) \Big)\dd x =0, \label{summaryint1-0}\end{equation}
\begin{equation}
 \int e^{-\ii kx} \Big( \partial_{U_0}\eta_1 \cosh (k\eta_1) + \ii \partial_x \xi_0^+ \sinh (k\eta_1) + \ii \partial_x\xi_0^-\sinh(k(h_0-h_-)) \Big)\dd x =0, \label{summaryint2-0}\end{equation}
which are valid for $k\in\R $. 
\end{subequations}
 \item The $n$ \textbf{Bernoulli-type equations}:
\begin{equation} \partial_t^2\eta_i + \partial_x^4 \eta_i + \partial_{U_{i-1}}\phi_{i-1} + \tfrac{1}{2}\|\nabla\! \phi_{i-1}\|^2   =\partial_{U_i}\phi_i+\tfrac{1}{2} \|\nabla\!  \phi_i\|^2 \qquad \textrm{on $\Gamma_i$}\label{summarybernoulli} \end{equation}
for $1\leq i \leq n$, where $\nabla \phi_i|_{\Gamma_i}$ and $\partial_t \phi_i |_{\Gamma_i}$ etc. are given in terms of $\xi_i^\pm$ via \eqref{mess} and \eqref{mess2}.
\end{itemize}
\end{proposition}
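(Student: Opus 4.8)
The plan is to assemble the statement directly from Propositions \ref{prop1}, \ref{prop2} and \ref{prop3}, each of which already establishes the equivalence of one constituent sub-problem with a pair of non-local equations. First I would observe that the full hydrodynamic system \eqref{alleqns}, \eqref{alleqnsup}, \eqref{alleqnsdown} splits, domain by domain, into two groups of conditions: a \emph{kinematic} group (harmonicity of $\phi_i$ in $\Omega_i$ together with the dynamic or Neumann conditions on $\partial\Omega_i$) and a \emph{Bernoulli} group (equations \eqref{beamdown}, \eqref{beamup}, \eqref{beamdown-n}, \eqref{beamup-0}). The kinematic group on an internal $\Omega_i$ is precisely the ``incomplete boundary value problem'' of Proposition \ref{prop1}, while Propositions \ref{prop2} and \ref{prop3} cover $\Omega_n$ and $\Omega_0$. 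Invoking these three results replaces the entire kinematic group by the $2n-2$ internal non-local equations \eqref{summaryint1}--\eqref{summaryint2} and the four external equations \eqref{summaryintegrals}. A bookkeeping point to verify here is that each interface $\Gamma_i$ ($1\le i\le n-1$) carries two kinematic conditions -- one inherited from $\Omega_{i-1}$ as its upper boundary (via \eqref{dynamicup} with the index shifted) and one from $\Omega_i$ as its lower boundary (via \eqref{dynamicdown}) -- and that their difference reproduces exactly the jump condition, the second line of \eqref{kinematic_pair}; thus nothing is lost or double-counted in the passage to the $\xi_i^\pm$ variables.

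Second I would treat the Bernoulli group. Collecting \eqref{beamdown}, \eqref{beamup}, \eqref{beamdown-n} and \eqref{beamup-0} over all interfaces leaves exactly one equation per $\Gamma_i$, $1\le i\le n$, namely \eqref{summarybernoulli}. The only remaining task is to write these purely in terms of $\{\eta_i\}$ and $\{\xi_i^\pm\}$: the traces $\partial_t\phi_i|_{\Gamma_i}$ and $\nabla\phi_i|_{\Gamma_i}$, together with the analogous traces on $\Gamma_{i+1}$, are given explicitly by the linear inversions \eqref{mess} and \eqref{mess2}, which were derived from the kinematic conditions and the chain-rule identities \eqref{chainrules(i)}, \eqref{chainrules(i+1)}. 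Substituting these closes \eqref{summarybernoulli} into a system in the claimed unknowns alone.

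Third comes the count: $2(n-1)$ internal equations $+\,4$ external equations $+\,n$ Bernoulli equations $=3n+2$, matching the $3n+2$ unknowns $\{\eta_i\}_{i=1}^n\cup\{\xi_i^\pm\}_{i=0}^n$ identified in Section 1.2. For the converse direction I would note that, given functions solving the listed system, Propositions \ref{prop1}--\ref{prop3} reconstruct harmonic potentials $\phi_i$ in each $\Omega_i$ with the prescribed boundary data, and \eqref{mess}, \eqref{mess2} ensure the $\phi_i$-traces appearing in \eqref{summarybernoulli} coincide with those of the reconstructed potentials, so that \eqref{summarybernoulli} is genuinely the Bernoulli condition for that reconstruction.

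I expect the real content to reside inside Propositions \ref{prop1}--\ref{prop3} themselves -- in particular the sufficiency direction, that a solution of the one-parameter family of global relations actually arises from a harmonic function on $\Omega_i$ with the stated traces (the Fokas reconstruction step, which exploits analyticity of the integrand in $k$ and the decay hypotheses). For the present proposition the only delicate matter is the interface accounting: making sure the two kinematic conditions attached to each internal $\Gamma_i$ are correctly distributed between the sub-problems for $\Omega_{i-1}$ and $\Omega_i$, and that the shared amplitude $\eta_i$ and the pair $\xi_{i-1}^+,\xi_i^-$ on $\Gamma_i$ enter consistently, so that the assembled system has exactly $3n+2$ independent equations with no hidden compatibility constraint dropped.
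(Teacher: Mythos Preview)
Your proposal is correct and matches the paper's approach exactly: the paper presents Proposition~\ref{summary_prop} as a summary statement, with no separate proof, simply collecting Propositions~\ref{prop1}--\ref{prop3} for the $2(n+1)$ non-local equations and appending the $n$ Bernoulli conditions rewritten via \eqref{mess}--\eqref{mess2}, together with the count $2(n+1)+n=3n+2$. Your additional remarks on interface bookkeeping and the converse direction are more careful than what the paper actually supplies.
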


\section{The Linear Problem and Well-Posedness}
In this section we consider the linearisation of the equations appearing Proposition \ref{summary_prop} and the corresponding Cauchy problem for the linearised equations. We denote $\eta_i = h_i + \tilde{\eta}_i$ where the $\{h_i\}$ are constant and $\|\partial_x\xi_i^\pm\|_{L^\infty}$ and $\|\tilde{\eta}_i\|_{L^\infty}$ are small so that quadratic terms and beyond may be neglected. For simplicity we restrict attention to the case in which $\mathcal{B}^\pm$ are flat. It is straight forward to adjust the analysis to deal with the case in which the upper and lower parts of $\partial\Omega$ vary according to a funciton in the Schwartz class.

We use $\hat{f}$ to denote the Fourier transform of the function $f$, and drop the tilde from $\tilde{\eta_i}$ for notational convenience. It will also be useful to denote the upper and lower parts of the channel by $y=h_{n+1}$ (top) and $y=h_0$ (bottom). Discarding quadratic terms and higher, the linearised equations are written as:
\begin{itemize}
 \item The $2n-2$ \textbf{internal equations}:
\begin{subequations}\label{interlin}
\begin{multline} (\partial_t \hat{\eta}_i + \ii k U_i \hat{\eta}_i )\sinh (kh_i)- (\partial_t \hat{\eta}_{i+1} + \ii k U_i \hat{\eta}_{i+1})\sinh (kh_{i+1}) \\  + k\hat{\xi}_i^+ \cosh (kh_{i+1}) - k\hat{\xi}_i^-  \cosh (kh_i) =0, \end{multline}
\begin{multline} (\partial_t \hat{\eta}_i + \ii k U_i \hat{\eta}_i )\cosh (kh_i)- (\partial_t \hat{\eta}_{i+1} + \ii k U_i \hat{\eta}_{i+1})\cosh (kh_{i+1}) \\  + k\hat{\xi}_i^+ \sinh (kh_{i+1}) - k\hat{\xi}_i^-  \sinh (kh_i) =0, \end{multline}
\end{subequations}
for $1\leq i \leq n$.
 \item The $2$ \textbf{external equations}:
\begin{subequations}\label{exterlin}
\begin{align} (\partial_t \hat{\eta}_n + \ii k U_n \hat{\eta}_n) \cosh (k(h_{n+1} - h_n)) + k \hat{\xi}^-_n \sinh (k(h_{n+1} - h_n)) &=0, \\
 (\partial_t \hat{\eta}_1 + \ii k U_0 \hat{\eta}_0) \cosh (k(h_1 - h_0)) + k \hat{\xi}_0^+ \sinh(k(h_1 -h_0)) &=0.
\end{align}
\end{subequations}
 \item The $n$ \textbf{Bernoulli-type conditions}:
\begin{equation} \partial^2_t \hat{\eta}_i + k^4 \hat{\eta}_i + (\partial_t \hat{\xi}^+_{i-1} + \ii k U_{i-1} \hat{\xi}^+_{i-1}) = (\partial_t \hat{\xi}^-_i + \ii k U_i \hat{\xi}^-_i), \label{bernoullilin}\end{equation}
for $1\leq i \leq n$.
\end{itemize}
Note that we have eliminated $\xi^+_n$ and $\xi^-_0$ using two of the external equations (see Remarks \ref{flat_rem1} and \ref{flat_rem2}) leaving $3n$ equations for $3n$ unknowns. Equations \eqref{interlin} and \eqref{exterlin} allow us to express the $\{\hat{\xi}_i^\pm\}$ in terms of $\{ \hat{\eta}_i\}$, which can then be used in \eqref{bernoullilin} to determine the governing equation for the $\{\hat{\eta}_i\}$. Introducing the functions $\varphi (k,t) = (\hat{\eta}_1, \ldots , \hat{\eta}_n)^T$, $\pi(k,t)= (\partial_t \hat{\eta}_1, \ldots , \partial_t \hat{\eta}_n)^T$ and $\Delta h_i = h_{i+1}-h_i$, the governing equations are:
\begin{align*} & \Big[ - \tfrac{\mathrm{csch}(k \Delta h_i)}{k} \Big] \dot{\pi}_{i+1} + \Big[ 1 + \tfrac{\coth (k\Delta h_i)}{k} + \tfrac{ \mathrm{coth}(k\Delta h_{i-1})}{k}\Big] \dot{\pi}_i + \Big[ - \tfrac{ \mathrm{csch}(k\Delta h_{i-1})}{k}\Big] \dot{\pi}_{i-1}   \\
 &\qquad = \Big[ 2\ii U_i \mathrm{csch}(k\Delta h_i)\Big] \pi_{i+1} - \Big[ 2\ii U_i \coth(k\Delta h_i) + 2\ii U_{i-1} \mathrm{coth}(k\Delta h_{i-1})\Big] \pi_i  \\ 
 &\qquad - \Big[ kU_i^2 \mathrm{csch} (k\Delta h_i)\Big] \varphi_{i+1}  - \Big[ k^4 - kU_{i-1}^2 \mathrm{coth}(k\Delta h_{i-1}) - kU_{i}^2 \coth (k\Delta h_i) \Big] \varphi_i  \\
&\qquad + \Big[ 2\ii U_{i-1} \mathrm{csch} (k\Delta h_{i-1}) \Big] \pi_{i-1} - \Big[ kU_{i-1}^2 \mathrm{csch} (k\Delta h_{i-1})\Big]\varphi_{i-1}\end{align*}
for $2\leq i \leq n-1$, where the dot denotes differentiation with respect to $t$. These equations are supplemented with the extra equations $n$ equations $\dot{\varphi}=\pi$. The problem becomes one of linear algebra: it is straightforward to see that the governing equations will take the form:
\begin{equation} \mathcal{M}_1(k)\partial_t \begin{bmatrix} \varphi \\ \pi \end{bmatrix} = \mathcal{M}_2(k) \begin{bmatrix} \varphi \\ \pi \end{bmatrix} \label{evoeqn}\end{equation}
where $\mathcal{M}_i(k)\in \mathrm{Mat}_{2n}(\mathbf{C})$, $i=1,2$, whose entries are meromorphic in $k$. As such, if the matrices $\mathcal{M}_1$ and $\mathcal{M}_2$ are determined explicitly, one can immediately write down the solution to the initial value problem:
\begin{equation} \begin{bmatrix} \varphi \\ \pi \end{bmatrix} = \exp(t\mathcal{M})(k) \begin{bmatrix} \varphi_0 \\ \pi_0 \end{bmatrix}, \label{evoeqn2} \end{equation}
where $\mathcal{M}=\mathcal{M}_1^{-1} \mathcal{M}_2$ and the vector on the RHS contains the initial data. We see that the Cauchy problem for the linearised equations is completely determined by the matrix $\mathcal{M}(k)$, which corresponds to the infinitesimal generator of the $C_0$-semigroup for the evolution equation in the Fourier space. In what follows we prove several results regarding the matrices $\mathcal{M}_1,\mathcal{M}_2$.
\begin{lemma}\label{matriceslem}
The matrices $\mathcal{M}_1(k)$ and $\mathcal{M}_2(k)$ take the form
\[ \mathcal{M}_1(k) = \left(\begin{smallmatrix} \mathbf{I}_n & 0 \\ 0 & \mathcal{A}(k)\end{smallmatrix}\right)\qquad \textrm{and}\qquad \mathcal{M}_2(k) = \left(\begin{smallmatrix} 0 & \mathbf{I}_n \\ k\mathcal{B}(k)-k^4\mathbf{I}_n & -2\ii \mathcal{C}(k)\end{smallmatrix}\right),\]
where $\mathcal{A}(k),\mathcal{B}(k),\mathcal{C}(k) \in \mathrm{Mat}_n(\mathbf{R})$ are symmetric, tridiagonal, meromorphic in $k$ and real for $k\in \R $. The matrix $\mathcal{A}(k)$ is given by:
\[ \left[\begin{smallmatrix}\tfrac{k+\coth (k\Delta h_1) +\coth(k\Delta h_0)}{k} & - \tfrac{ \mathrm{csch}(k \Delta h_1)}{k} & 0  & \textstyle{\cdots} \\
    -  \tfrac{ \mathrm{csch}(k \Delta h_1)}{k} & \tfrac{k+\coth (k\Delta h_2)+\coth(k\Delta h_1)}{k} & - \tfrac{ \mathrm{csch}(k \Delta h_2)}{k}   &\textstyle{\cdots} \\
0 & - \tfrac{ \mathrm{csch}(k \Delta h_2)}{k} & \tfrac{k+\coth (k\Delta h_3)+\mathrm{coth}(k\Delta h_2)}{k} & \textstyle{\cdots}  \\
\vdots & \vdots & \vdots & \ddots \end{smallmatrix} \right] \]
The matrix $\mathcal{B}(k)$ is given by:
\[ \left[\begin{smallmatrix}  U_0^2 \coth(k\Delta h_0)+U_1^2 \coth (k\Delta h_1) & - U_1^2 \mathrm{csch}(k\Delta h_1)  & 0  & \textstyle{\cdots} \\
    - U_1^2 \mathrm{csch}(k\Delta h_1) &  U_1^2 \coth(k\Delta h_1)+U_2^2 \coth (k\Delta h_2) & - U_2^2 \mathrm{csch}(k\Delta h_2)   &\textstyle{\cdots} \\
0 & - U_2^2 \mathrm{csch}(k\Delta h_2) & U_2^2 \coth(k\Delta h_2)+U_3^2 \coth (k\Delta h_3) & \textstyle{\cdots}  \\
\vdots & \vdots & \vdots & \ddots \end{smallmatrix} \right] \]
The matrix $\mathcal{C}(k)$ is given by:
\[ \left[\begin{smallmatrix} U_0 \coth(k\Delta h_0)+ U_1 \coth (k\Delta h_1) & - U_1 \mathrm{csch}(k\Delta h_1)  & 0  & \textstyle{\cdots} \\
    - U_1 \mathrm{csch}(k\Delta h_1) &  U_1 \coth(k\Delta h_1)+ U_2 \coth (k\Delta h_2) & - U_2 \mathrm{csch}(k\Delta h_2)   &\textstyle{\cdots} \\
0 & - U_2 \mathrm{csch} (k\Delta h_2) & U_2 \coth(k\Delta h_2)+ U_3 \coth (k\Delta h_3) & \textstyle{\cdots}  \\
\vdots & \vdots & \vdots & \ddots \end{smallmatrix} \right] \]
\end{lemma}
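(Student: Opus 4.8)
The plan is to use the linearised equations \eqref{interlin}, \eqref{exterlin}, \eqref{bernoullilin} to eliminate the potential variables $\{\hat\xi_i^\pm\}$ in favour of the plate amplitudes $\{\hat\eta_i\}$, thereby reducing \eqref{bernoullilin} to a closed second-order system in $t$ whose coefficient matrices are exactly $\mathcal{M}_1(k)$ and $\mathcal{M}_2(k)$.

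First I would invert the internal equations. For each fixed $i$ with $1\le i\le n-1$, the pair \eqref{interlin} is a linear system for the unknowns $k\hat\xi_i^+,k\hat\xi_i^-$ whose coefficient matrix is $\left(\begin{smallmatrix}\cosh kh_{i+1} & -\cosh kh_i\\ \sinh kh_{i+1} & -\sinh kh_i\end{smallmatrix}\right)$, with determinant $\sinh(k\,\Delta h_i)$ by the hyperbolic subtraction formula; hence it is invertible away from the discrete set where $\sinh(k\Delta h_i)=0$, and Cramer's rule together with the identities $\cosh A\cosh B-\sinh A\sinh B=\cosh(A-B)$, $\sinh A\cosh B-\cosh A\sinh B=\sinh(A-B)$ gives
\[ k\hat\xi_i^+=\coth(k\Delta h_i)\,\partial_{U_i}\hat\eta_{i+1}-\mathrm{csch}(k\Delta h_i)\,\partial_{U_i}\hat\eta_i,\qquad k\hat\xi_i^-=\mathrm{csch}(k\Delta h_i)\,\partial_{U_i}\hat\eta_{i+1}-\coth(k\Delta h_i)\,\partial_{U_i}\hat\eta_i, \]
where $\partial_{U}$ now abbreviates the Fourier-space operator $\partial_t+\ii kU$. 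The two external equations \eqref{exterlin} are already solved, giving $k\hat\xi_n^-=-\coth(k\Delta h_n)\,\partial_{U_n}\hat\eta_n$ and $k\hat\xi_0^+=\coth(k\Delta h_0)\,\partial_{U_0}\hat\eta_1$; these are precisely the degenerate ``edge'' cases of the interior formula in which the rigid wall plays the role of a plate of zero amplitude, so the csch-term drops out.

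Next I would substitute these relations into the $i$-th Bernoulli equation \eqref{bernoullilin}, whose potential contribution is $\partial_{U_{i-1}}\hat\xi_{i-1}^+-\partial_{U_i}\hat\xi_i^-$. The crucial structural point is that $\hat\xi_{i-1}^+$ has been expressed using the \emph{same} convective operator $\partial_{U_{i-1}}$ that acts on it in \eqref{bernoullilin}, and likewise $\hat\xi_i^-$ with $\partial_{U_i}$; since $\partial_U$ commutes with the $k$-dependent hyperbolic multipliers, the substitution produces only the squared operators $\partial_{U_{i-1}}^2$ and $\partial_{U_i}^2$ applied to $\hat\eta_{i-1},\hat\eta_i,\hat\eta_{i+1}$ (each $\hat\xi_i^\pm$ also carries a factor $k^{-1}$). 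Expanding $\partial_U^2=\partial_t^2+2\ii kU\partial_t-k^2U^2$ and setting $\varphi_i=\hat\eta_i$, $\pi_i=\partial_t\hat\eta_i$, each Bernoulli equation becomes a linear relation among $\dot\pi_{i-1},\dot\pi_i,\dot\pi_{i+1}$, $\pi_{i-1},\pi_i,\pi_{i+1}$ and $\varphi_{i-1},\varphi_i,\varphi_{i+1}$ — this reproduces the governing equations displayed above for $2\le i\le n-1$, and the two boundary formulae supply the cases $i=1$ and $i=n$. Adjoining the $n$ identities $\dot\varphi_i=\pi_i$ and collecting coefficients casts the $2n$ equations in the form \eqref{evoeqn}: the $\dot\varphi=\pi$ block gives the $\mathbf{I}_n$-blocks of $\mathcal{M}_1,\mathcal{M}_2$, the coefficients of the $\dot\pi_j$ assemble into $\mathcal{A}(k)$, the $\varphi$-block of $\mathcal{M}_2$ into $k\mathcal{B}(k)-k^4\mathbf{I}_n$, and that of the $\pi_j$ into $-2\ii\mathcal{C}(k)$, with exactly the tridiagonal entries claimed.

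Finally, the asserted properties follow by inspection. Each Bernoulli equation only sees the two fluid layers adjacent to $\Gamma_i$, so it couples $\hat\eta_i$ to $\hat\eta_{i\pm1}$ alone; this gives tridiagonality. The $(i,i+1)$ entry of each of $\mathcal{A},\mathcal{B},\mathcal{C}$ originates from the single layer $\Omega_i$ and therefore coincides with the $(i+1,i)$ entry, which gives symmetry. And $\coth$, $\mathrm{csch}$ and $k^{-1}$ are meromorphic on $\mathbf{C}$ and real-valued on $\R$, as are the constants $U_i$ and $\Delta h_i$, so the same holds for every entry. I expect the only real obstacle to be the sign-and-index bookkeeping at the three boundary rows $i\in\{1,n\}$: one must check that the rigid-wall external equations \eqref{exterlin} feed the terms $\coth(k\Delta h_0)$ and $\coth(k\Delta h_n)$ into the first and last rows of $\mathcal{A},\mathcal{B},\mathcal{C}$ with exactly the sign that makes the interior tridiagonal pattern extend unbroken to the endpoints.
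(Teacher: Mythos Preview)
Your proposal is correct and follows exactly the route the paper takes: the paper does not give a separate proof of this lemma but rather outlines the computation in the paragraph immediately preceding it --- solve \eqref{interlin}, \eqref{exterlin} for the $\hat\xi_i^\pm$, substitute into \eqref{bernoullilin}, and read off the tridiagonal coefficients displayed just before \eqref{evoeqn}. Your explicit Cramer's-rule inversion and the observation that $\partial_{U_i}$ appears \emph{squared} after substitution make the mechanism clearer than the paper's sketch, and your edge formula $k\hat\xi_0^+=\coth(k\Delta h_0)\,\partial_{U_0}\hat\eta_1$ is the sign that actually reproduces the stated $\mathcal{A}_{11}$ (the displayed version of \eqref{exterlin} in the paper carries a sign typo inherited from Remark~\ref{flat_rem2}).
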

Whilst the general form of $\mathcal{M}(k)$ is complicated, many of the entries become exponentially small beyond $|kh|>O(1)$, where $h=\max_i \Delta h_i$. This makes the asymptotic analysis of $\mathcal{M}(k)$ extremely simple. In dropping the exponentially small terms, the $\Delta h_i$ are removed from the problem and we deduce that the short wavelength dynamics are independent of the distribution of the elastic plates within $\Omega$. This is expected on physical grounds: the separation will effect modes with waveslengths of comparable size.

\begin{lemma}\label{inverse}
The determinant of $\mathcal{M}_1(k)$ is non-zero for each $k\in \mathbf{R}$.
\end{lemma}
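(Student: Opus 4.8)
The plan is to reduce the claim to the nonsingularity of the tridiagonal block $\mathcal{A}(k)$ and then prove strict diagonal dominance. By the block form recorded in Lemma~\ref{matriceslem} one has $\det\mathcal{M}_1(k)=\det\mathcal{A}(k)$, so it suffices to show that $\mathcal{A}(k)$ is invertible for every $k\in\R\setminus\{0\}$, and separately to understand the behaviour at $k=0$, where the entries of $\mathcal{M}_1$ have poles.

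First I would fix signs: the ordering of the plates gives $\eta_{i+1}>\eta_i$, and $\mathcal{B}^-$ lies below $\Gamma_1$ while $\mathcal{B}^+$ lies above $\Gamma_n$, so every separation $\Delta h_i=h_{i+1}-h_i$ is strictly positive for $0\le i\le n$. Next I would record the elementary identity $\coth x-\mathrm{csch}\,x=\tanh(x/2)$, which together with the oddness of $\coth$ and $\mathrm{csch}$ yields, for any $\Delta>0$ and any $k\neq0$,
\[ \frac{\coth(k\Delta)}{k}=\frac{\coth(|k|\Delta)}{|k|}>\frac{\mathrm{csch}(|k|\Delta)}{|k|}=\frac{|\mathrm{csch}(k\Delta)|}{|k|}>0. \]
Applying this termwise, the $i$-th diagonal entry of $\mathcal{A}(k)$ equals $1+k^{-1}\coth(k\Delta h_i)+k^{-1}\coth(k\Delta h_{i-1})$, which strictly exceeds $|k|^{-1}|\mathrm{csch}(k\Delta h_i)|+|k|^{-1}|\mathrm{csch}(k\Delta h_{i-1})|$, the sum of the moduli of the off-diagonal entries of that row; in the first and last rows the "missing" off-diagonal term is replaced by the strictly positive quantity $k^{-1}\coth(k\Delta h_0)$, respectively $k^{-1}\coth(k\Delta h_n)$, which only strengthens the estimate. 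Hence $\mathcal{A}(k)$ is real, symmetric, and strictly diagonally dominant with positive diagonal for every $k\neq0$, so by the Levy--Desplanques theorem (equivalently, Gershgorin's circle theorem) it is nonsingular --- in fact positive definite.

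It remains to address $k=0$. Here I would pass to $k^2\mathcal{A}(k)$, which is analytic at the origin with limit the tridiagonal matrix $L$ given by $L_{ii}=\Delta h_{i-1}^{-1}+\Delta h_i^{-1}$ and $L_{i,i\pm1}=-\Delta h_i^{-1}$; the quadratic-form identity $v^{\mathsf T}Lv=\sum_{i=0}^{n}\Delta h_i^{-1}(v_{i+1}-v_i)^2$ with $v_0=v_{n+1}=0$ shows $L$ is positive definite, so $\det\mathcal{A}(k)\sim k^{-2n}\det L\neq0$ in a punctured neighbourhood of $0$ and $\mathcal{M}_1(k)^{-1}=k^2(k^2\mathcal{A})^{-1}\oplus\mathbf{I}_n$ extends continuously across $k=0$. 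I expect the two points needing genuine care to be the pointwise comparison $\coth>|\mathrm{csch}|$ (valid uniformly in the sign of $k$ once one divides by $k$) and the rescaling argument at $k=0$; the matrix-theoretic conclusion from strict diagonal dominance is then immediate, so the main obstacle is really the careful treatment of the pole at the origin rather than anything structural.
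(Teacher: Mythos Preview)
Your argument is correct and matches the paper's proof almost exactly: the paper also reduces to showing $\mathcal{A}(k)\succ 0$ and establishes the same row-wise inequality $a_i>b_i+b_{i-1}$ via $\coth-\mathrm{csch}=\tanh(\cdot/2)>0$, obtaining it from an AM--GM bound on the quadratic form $X\cdot\mathcal{A}X$ rather than by invoking Gershgorin/Levy--Desplanques by name. Your explicit treatment of $k=0$ (rescaling by $k^{2}$ and identifying the discrete-Laplacian limit $L$) is an addition; the paper leaves that point implicit here and only addresses analyticity of $\mathcal{M}_1^{-1}\mathcal{M}_2$ at the origin in the subsequent lemma.
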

\begin{proof}
It is enough to prove that $\mathcal{A}(k)$ is invertible, and we do this by proving $\mathcal{A}\succ 0$, i.e. $\mathcal{A}$ is positive definite. Denoting the diagonal entries $\{a_i\}_{i=1}^n$, and the sub/sup-diagonals by $\{-b_i\}_{i=1}^{n-1}$, we have for $X\in\mathbf{R}^n$:
\begin{align*}X\cdot ( \mathcal{A} X) &= \sum_{i=1}^n a_i X_i^2 - 2 \sum_{i=1}^{n-1} b_i X_i X_{i+1} \\
  &\geq (a_1 - b_1)X_1^2 + (a_n - b_{n-1})X_n^2 + \sum_{i=2}^{n-1} (a_i -b_i - b_{i-1})X_i^2,
\end{align*}
which follows from AM-GM inquality. It is straight forward to prove that each of the coefficients of the $X_i^2$ are strictly positive, i.e.
\[ \tfrac{1}{k} \left(k + \coth (k\Delta h_i) - \mathrm{csch}(k\Delta h_i)+\coth (k\Delta h_{i+1}) - \mathrm{csch}(k\Delta h_{i+1})\right) >0 \]
so we conclude $\mathcal{A}\succ 0$. From this we deduce that $\mathrm{det} (\mathcal{M}_1)>0$ for $k\in\mathbf{R}$.
\end{proof}
An immediate corollary to Lemma \ref{inverse} is that $\mathcal{M}_1(k)$ is invertible, and what is more, $\mathcal{M}_1^{-1}(k)$ is analytic on the real line. This follows from the previous lemma and the fact that the entries of $\mathcal{A}(k)$ are analytic on the real line.

\begin{lemma}\label{analyticprop}
The matrix $\mathcal{M}(k)$ is a analytic function of $k$ in a neighbourhood of the real line whose spectrum becomes purely imaginary as $|k|\rightarrow \infty$.
\end{lemma}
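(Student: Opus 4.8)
The statement has two parts — analyticity of $\mathcal M(k)=\mathcal M_1^{-1}(k)\mathcal M_2(k)$ in a complex neighbourhood of $\R$, and the collapse of $\mathrm{spec}\,\mathcal M(k)$ onto $\ii\R$ as $|k|\to\infty$ — which I would treat separately. For analyticity: by Lemma~\ref{matriceslem} every entry of $\mathcal M_1$ and $\mathcal M_2$ is a rational function of $k$, $\coth(k\Delta h_j)$ and $\mathrm{csch}(k\Delta h_j)$, hence meromorphic with poles confined to $\{0\}\cup\{\ii\pi m/\Delta h_j: m\in\Z\setminus\{0\},\ j\}$; away from $0$ these all lie at distance at least $\pi/\max_j\Delta h_j$ from $\R$. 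The corollary to Lemma~\ref{inverse} already gives $\mathcal M_1^{-1}$ analytic on $\R$, which propagates automatically to a complex neighbourhood of $\R$ (patch the local power series), and $\mathcal M_2$ is analytic near $\R\setminus\{0\}$; so $\mathcal M=\mathcal M_1^{-1}\mathcal M_2$ is analytic near $\R\setminus\{0\}$ and the only thing left is to exclude a pole at $k=0$. Its non-trivial blocks are $\mathcal A^{-1}(k)\bigl(k\mathcal B(k)-k^4\mathbf{I}_n\bigr)$ and $-2\ii\,\mathcal A^{-1}(k)\mathcal C(k)$; the first is a product of functions regular at $0$ (note $k\mathcal B(k)$ is regular there), and for the second a short Laurent count using $\mathcal A(k)=k^{-2}\mathcal A_{-2}+O(1)$, with $\mathcal A_{-2}$ positive definite (the same AM--GM computation as in Lemma~\ref{inverse}, so $\mathcal A^{-1}$ has a double zero at $0$), shows that the double zero of $\mathcal A^{-1}$ absorbs the simple pole of $\mathcal C$. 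Hence $\mathcal M$ is analytic in a neighbourhood of $\R$.

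For the spectrum, the plan is to reduce the $2n\times2n$ eigenvalue problem to an $n\times n$ quadratic one. Since $\det\mathcal M_1=\det\mathcal A\ne0$ on $\R$, we have $\lambda\in\mathrm{spec}\,\mathcal M(k)$ iff $\det\bigl(\mathcal M_2(k)-\lambda\mathcal M_1(k)\bigr)=0$; taking the Schur complement of the block $-\lambda\mathbf{I}_n$ (valid for $\lambda\ne0$, hence for all $\lambda$ since both sides are polynomials in $\lambda$) collapses this to
\[
\det\bigl(\lambda^2\mathcal A(k)+2\ii\lambda\,\mathcal C(k)+k^4\mathbf{I}_n-k\mathcal B(k)\bigr)=0 .
\]
So each eigenvalue $\lambda$ carries a null vector $u\ne0$ of $\lambda^2\mathcal A+2\ii\lambda\mathcal C+(k^4\mathbf{I}_n-k\mathcal B)$; pairing with $u$ and using that $\mathcal A(k),\mathcal B(k),\mathcal C(k)$ are real symmetric for $k\in\R$ turns this into the scalar identity $a\lambda^2+2\ii c\lambda+e=0$ with $a=u^*\mathcal A(k)u>0$ (Lemma~\ref{inverse}), $c=u^*\mathcal C(k)u\in\R$ and $e=k^4\|u\|^2-k\,u^*\mathcal B(k)u\in\R$. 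The one quantitative input is that $\|\mathcal B(k)\|$ stays bounded as $|k|\to\infty$ — immediate, since its entries only involve $\coth(k\Delta h_j)\to\pm1$ and $\mathrm{csch}(k\Delta h_j)\to0$ — so $e\ge\|u\|^2\bigl(k^4-|k|\,\|\mathcal B(k)\|\bigr)>0$ once $|k|$ is large; solving the quadratic then gives $\lambda=\ii\bigl(-c\pm\sqrt{c^2+ae}\bigr)/a\in\ii\R$. Thus there is a $K$ with $\mathrm{spec}\,\mathcal M(k)\subset\ii\R$ for all real $|k|>K$.

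The genuinely fiddly point is small in substance: it is the removable singularity at $k=0$, where one must pin down the leading Laurent coefficients of $\mathcal A$, $\mathcal B$, $\mathcal C$ and verify the simple pole of $\mathcal C$ is cancelled by the double zero of $\mathcal A^{-1}$. The spectral half, by contrast, is essentially immediate once the Schur reduction is written down: real-symmetry of $\mathcal A,\mathcal B,\mathcal C$ forces the eigenvalue condition into a scalar quadratic with real leading and constant coefficients — both positive for large $|k|$, by the dominance of $k^4$ — and purely imaginary linear coefficient, and every root of such a quadratic is purely imaginary.
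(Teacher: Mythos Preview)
Your argument is correct. The analyticity half matches the paper's proof almost exactly: both reduce the issue to the removable singularity at $k=0$ and dispose of it by a Laurent computation, though you supply more detail than the paper does.

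For the spectral half you take a genuinely different route. The paper argues asymptotically: as $|k|\to\infty$ one has $\mathcal A\sim\mathbf I_n$ and $\mathcal B,\mathcal C$ tend to explicit diagonal matrices, so the characteristic polynomial of $\mathcal M$ asymptotically factors into $n$ explicit quadratics $\lambda^2+2\ii(U_m+U_{m+1})\lambda+k^4-k(U_m^2+U_{m+1}^2)$ whose roots are purely imaginary for large $|k|$; continuity of eigenvalues in the matrix entries then finishes. Your approach instead performs the Schur reduction to the quadratic pencil $\det(\lambda^2\mathcal A+2\ii\lambda\mathcal C+k^4\mathbf I_n-k\mathcal B)=0$ and pairs with a null vector to obtain a scalar quadratic $a\lambda^2+2\ii c\lambda+e=0$ with $a>0$, $c\in\R$ and $e>0$ for large $|k|$, whence $\lambda\in\ii\R$ exactly. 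This is sharper --- it shows $\sigma(\mathcal M(k))\subset\ii\R$ for all $|k|$ beyond a fixed threshold, not merely that the real parts tend to zero --- and it avoids any appeal to continuous dependence of eigenvalues. It is also worth noting that the pencil reduction you use here is precisely the device the paper deploys later in the proof of Theorem~\ref{stability} (equation~\eqref{detmatrix}), so your argument anticipates that machinery. The paper's asymptotic-diagonalisation route, on the other hand, delivers the explicit large-$k$ form \eqref{asymptoticM} of $\mathcal A,\mathcal B,\mathcal C$, which is reused in the well-posedness proof.
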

\begin{proof}
Our previous discussion establishes the analyticity of $\mathcal{M}_1^{-1}(k)$ on the real line. It is clear that $\mathcal{M}_2(k)$ is analytic away from the origin, and a straightforward computation of the relevant Laurent series about $k=0$ establishes analyticity of $(\mathcal{M}_1^{-1} \mathcal{M}_2)(k)$ at $k=0$, so the first claim is established. For the second, some standard asymptotic expansions for large, positive $k$ reveals:
\begin{subequations}\label{asymptoticM}\begin{align} \mathcal{A}(k) &\sim \mathbf{I}_n, \\ \mathcal{B}(k) &\sim \mathrm{diag}(U_0^2 + U_1^2, \ldots, U_{n-1}^2+U_n^2),\\ \mathcal{C}(k) &\sim \mathrm{diag}(U_0 + U_1, \ldots, U_{n-1}+U_n). \end{align}\end{subequations}
This gives us the asymptotic form of the characteristic polynomial:
\[ \mathrm{det} \Big(\mathcal{M}(k)-\lambda \mathbf{I}_{2n}\Big) \sim (-1)^n \prod_{m=0}^{n-1}\Big( \lambda^2 + 2\ii (U_m+U_{m+1}) - k(U_m^2+U_{m+1}^2) +k^4 \Big). \]
Since the characteristic polynomial and hence the eigenvalues of $\mathcal{M}(k)$ depend continuously on the elements of $\mathcal{M}(k)$, we deduce that the eigenvalues become purely imaginary as $k\rightarrow \infty$. A similar argument holds for $k\rightarrow -\infty$.
\end{proof}

We now address the issue of well-posedness: given initial data in a specified function space, what can be said in regards to the continuity of the map from the inital data to to the solution for times $t>0$.
\begin{theorem}[Local well-posedness in $L^2_{\dd x}(\R )^{\times 2n}$]\label{well-posed} Let the $n$ elastic plates be distributed throughout the channel $\Omega$. If the initial amplitudes $\eta_0$ and velocities $\partial_t\eta_0$ belong to $H^s_{\dd x}(\R )$, where $s\geq2$, the Cauchy problem on $\R \times (0,T)$ is locally well-posed in $L^2_{\dd x}(\R )$, so that:
 \[ \| (\eta,\partial_t \eta) \|_{L^2} \leq C_n(T) \|(\eta_0,\partial_t \eta_0)\|_{H^s} \]
where $C_n(T)$ is a constant depending on the time domain of the problem and the number of elastic plates. We have used the norm:
\[ \|(\eta,\partial_t \eta)\|_{L^2}^2 = \sum_{i=1}^n \Big(\|\eta_i\|^2_{L^2}+\|\partial_t\eta_i\|^2_{L^2}\Big) \]
and similarly for $\|\cdot\|_{H^s}$.
\end{theorem}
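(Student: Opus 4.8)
The plan is to carry everything over to the Fourier side. By Lemmas~\ref{inverse} and~\ref{analyticprop} the generator $\mathcal{M}(k)=\mathcal{M}_1(k)^{-1}\mathcal{M}_2(k)$ is well defined and analytic for every $k\in\R$, so \eqref{evoeqn2} reads $\widehat{(\eta,\partial_t\eta)}(k,t)=\exp(t\mathcal{M}(k))\,\widehat{(\eta_0,\partial_t\eta_0)}(k)$ and, by Plancherel,
\[
\|(\eta,\partial_t\eta)(t)\|_{L^2}^2=\int_{\R}\big\|\exp(t\mathcal{M}(k))\,\widehat{(\eta_0,\partial_t\eta_0)}(k)\big\|_{\mathbf{C}^{2n}}^2\,\dd k .
\]
It therefore suffices to establish the pointwise bound $\|\exp(t\mathcal{M}(k))\|\le C_n(T)\,\langle k\rangle^{2}$ for $t\in[0,T]$ and all $k\in\R$: squaring and integrating against the weight $\langle k\rangle^{2s}\ge\langle k\rangle^{4}$ then gives the stated inequality for $s\ge 2$. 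Using the block decomposition of Lemma~\ref{matriceslem}, I would work with the equivalent second-order system $\mathcal{A}(k)\ddot\varphi+2\ii\,\mathcal{C}(k)\dot\varphi+\big(k^{4}\mathbf{I}_n-k\mathcal{B}(k)\big)\varphi=0$, where $\varphi=\hat\eta$ and $\dot\varphi=\widehat{\partial_t\eta}$.

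For $|k|$ large the key is a conserved energy. Set
\[
E(k,t)=\big\langle \mathcal{A}(k)\dot\varphi,\dot\varphi\big\rangle+\big\langle \big(k^{4}\mathbf{I}_n-k\mathcal{B}(k)\big)\varphi,\varphi\big\rangle .
\]
Because $\mathcal{A},\mathcal{B},\mathcal{C}$ are real and symmetric (Lemma~\ref{matriceslem}), differentiating $E$ and inserting the equation shows that the gyroscopic term $2\ii\mathcal{C}$ does no work and $\dot E\equiv 0$. The asymptotics \eqref{asymptoticM} give $\mathcal{A}(k)\to\mathbf{I}_n$ and $\|k\mathcal{B}(k)\|=O(|k|)=o(k^{4})$, so there are $K>0$ and $c\in(0,1)$ with $k^{4}\mathbf{I}_n-k\mathcal{B}(k)\succ0$ and $c\big(\|\dot\varphi\|^{2}+k^{4}\|\varphi\|^{2}\big)\le E(k,t)\le c^{-1}\big(\|\dot\varphi\|^{2}+k^{4}\|\varphi\|^{2}\big)$ for all $|k|\ge K$. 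Conservation of $E$ then yields, uniformly in $t$,
\[
\|\dot\varphi(k,t)\|^{2}+k^{4}\|\varphi(k,t)\|^{2}\ \le\ c^{-2}\big(\|\dot\varphi(k,0)\|^{2}+k^{4}\|\varphi(k,0)\|^{2}\big),\qquad |k|\ge K,
\]
and hence $|\varphi(k,t)|^{2}+|\dot\varphi(k,t)|^{2}\le C\big(\langle k\rangle^{4}|\hat\eta_0(k)|^{2}+|\widehat{\partial_t\eta_0}(k)|^{2}\big)$ there.

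For $|k|\le K$ I would argue crudely: by Lemma~\ref{analyticprop}, $\mathcal{M}(k)$ is continuous on the compact set $\{|k|\le K\}$, so $M_K:=\sup_{|k|\le K}\|\mathcal{M}(k)\|<\infty$ and $\|\exp(t\mathcal{M}(k))\|\le e^{tM_K}\le e^{TM_K}$, which gives $|\varphi(k,t)|^{2}+|\dot\varphi(k,t)|^{2}\le e^{2TM_K}\big(|\hat\eta_0(k)|^{2}+|\widehat{\partial_t\eta_0}(k)|^{2}\big)$. Combining the two frequency ranges and integrating over $k$ by Plancherel gives
\[
\|(\eta,\partial_t\eta)(t)\|_{L^2}^2\le\big(e^{2TM_K}+C\big)\,\|(\eta_0,\partial_t\eta_0)\|_{H^{2}}^{2}\le C_n(T)^{2}\,\|(\eta_0,\partial_t\eta_0)\|_{H^{s}}^{2}
\]
for $s\ge 2$ (in fact $\eta_0\in H^{2}$ and $\partial_t\eta_0\in L^2$ already suffice). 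Existence of the solution comes from the formula \eqref{evoeqn2}; uniqueness follows because the $k$-wise linear ODE with vanishing data has only the zero solution; and since the data-to-solution map is linear, the displayed bound is exactly the asserted continuous dependence, which is local well-posedness on $(0,T)$.

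The main obstacle is the high-frequency step, and more precisely the fact that the conserved form $E$ is coercive only once $|k|\ge K$: for bounded $k$ the ``stiffness'' $k^{4}\mathbf{I}_n-k\mathcal{B}(k)$ need not be positive — this band is exactly where the physical instabilities reside — so no frequency-uniform energy identity is available and one must simply absorb that compact set of frequencies into $C_n(T)$. This is why $C_n(T)$, in contrast with a globally well-posed problem, is allowed to grow with $T$ through $e^{TM_K}$. Two points need care along the way: one must use that $\mathcal{A},\mathcal{B},\mathcal{C}$ are genuinely real and symmetric, so that $\ii\mathcal{C}$ is purely gyroscopic and contributes nothing to $\dot E$; and the factor $\langle k\rangle^{2}$ (loss of two derivatives) is intrinsic, since the natural energy space for $(\eta,\partial_t\eta)$ is $\dot{H}^{2}\times L^2$ rather than $L^2\times L^2$, so passing to an $L^2\times L^2$ estimate necessarily costs two derivatives on the amplitude data.
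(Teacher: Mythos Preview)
Your argument is correct, and it takes a genuinely different route from the paper. The paper also reduces to a pointwise bound on $\|\exp(t\mathcal{M}(k))\|$ via Parseval, but at high frequency it writes $\mathcal{M}(k)=\mathcal{P}(k)+\mathcal{E}(k)$ with $\mathcal{P}$ the diagonal limit coming from \eqref{asymptoticM} (purely imaginary spectrum) and $\mathcal{E}$ small, then invokes the semigroup perturbation estimate $\|\exp(t\mathcal{M})\|\le c\,e^{(\beta+c\|\mathcal{E}\|)t}$ together with a direct computation $\|\exp(t\mathcal{P})\|\le nk^{2}\sqrt{t}$, obtaining $\|\exp(t\mathcal{M}(k))\|\le cn\langle k\rangle^{2}\sqrt{T}\,e^{c\epsilon T}$. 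Your approach instead passes to the second-order form $\mathcal{A}\ddot\varphi+2\ii\mathcal{C}\dot\varphi+(k^{4}\mathbf{I}_n-k\mathcal{B})\varphi=0$ and exploits the exact conservation of the quadratic energy $E=\langle\mathcal{A}\dot\varphi,\dot\varphi\rangle+\langle(k^{4}\mathbf{I}_n-k\mathcal{B})\varphi,\varphi\rangle$, which is legitimate precisely because $\mathcal{A},\mathcal{B},\mathcal{C}$ are real symmetric so $\ii\mathcal{C}$ is skew-Hermitian. This buys you a \emph{time-uniform} high-frequency bound, so all the $T$-dependence of $C_n(T)$ is concentrated in the compact band $|k|\le K$ (where it is unavoidable, since this is where the genuine instabilities live); the paper's bound, by contrast, carries a spurious $\sqrt{T}\,e^{c\epsilon T}$ factor at every frequency. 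Your observation that $\eta_0\in H^{2}$, $\partial_t\eta_0\in L^{2}$ already suffices is also sharper than the symmetric $H^{s}\times H^{s}$ hypothesis in the statement, and falls out naturally from the energy because $E$ controls $k^{4}|\varphi|^{2}+|\dot\varphi|^{2}$ rather than $\langle k\rangle^{4}(|\varphi|^{2}+|\dot\varphi|^{2})$.
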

\begin{proof}
By Parseval's theorem it is enough to prove the relevant inequality in the Fourier space. We proceed to make a bound on $\exp(\mathcal{M}t)(k)$. We have already proved in Proposition \ref{analyticprop} that $\mathcal{M}(k)$ is analytic on the real line, so we only consider the large $k$ behaviour of $\exp(\mathcal{M}t)(k)$. From equation \eqref{asymptoticM} we see that $\mathcal{M}$ can be written as:
\[ \mathcal{P}(k) + \mathcal{E}(k) \]
where $\mathcal{P}(k)$ has purely imaginary spectrum and $\|\mathcal{E}(k)\|$ is arbitrarily small for $k$ sufficiently large\footnote{The norm $\|\cdot\|$ can be any induced norm on $\mathrm{Mat}_{2n}(\mathbf{C})$.}. Now recall the following estimate from semigroup theory: if $\|\exp(\mathcal{P}t)\|<ce^{\beta t}$ and $\mathcal{M}=\mathcal{P}+\mathcal{E}$, then:
\begin{equation} \| \exp(\mathcal{M}t)\| \leq ce^{(\beta + c\|\mathcal{E}\|)t}. \label{semigroupbound}\end{equation}
for some $c>0$. A straightforward computation reveals:
\[ \| \exp(\mathcal{P}t)\| \leq n k^2 \sqrt{t}. \]
Choosing $|k|$ large enough so that $\|\mathcal{E}\|<\epsilon$, the bound in \eqref{semigroupbound} gives:
\begin{align*} \| \exp(\mathcal{M}t)(k) \| &\leq c n k^2 \sqrt{t} e^{c\epsilon t}  \\
&\leq c n \langle k\rangle ^2 \sqrt{T} e^{c\epsilon T}.  
\end{align*}
for $k$ sufficiently large. Finally then:
\begin{align*} \| \eta\|_{L^2} &= \| \exp(t\mathcal{M})(k) \hat{\eta}_0 \|_{L^2} \\
 &\leq c n  \sqrt{T} e^{cT} \| \langle k\rangle^2 \hat{\eta}_0 \|_{L^2} \\
 & \leq C_n(T) \|\eta_0 \|_{H^s}.
\end{align*}
where $s\geq 2$. This is the desired result.
\end{proof}
\begin{figure}\label{n6_output_stable}
\begin{center}
\includegraphics[scale=0.45]{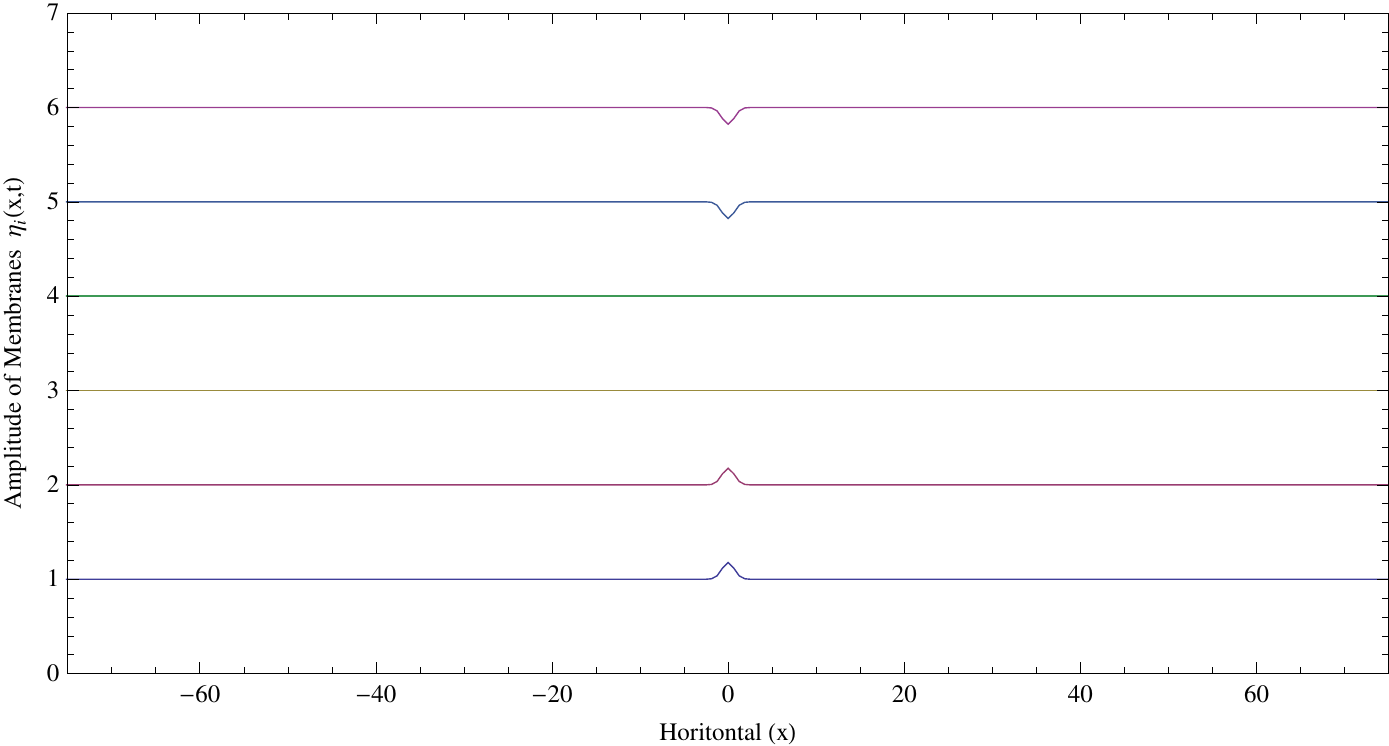}\hspace{4mm}\includegraphics[scale=0.45]{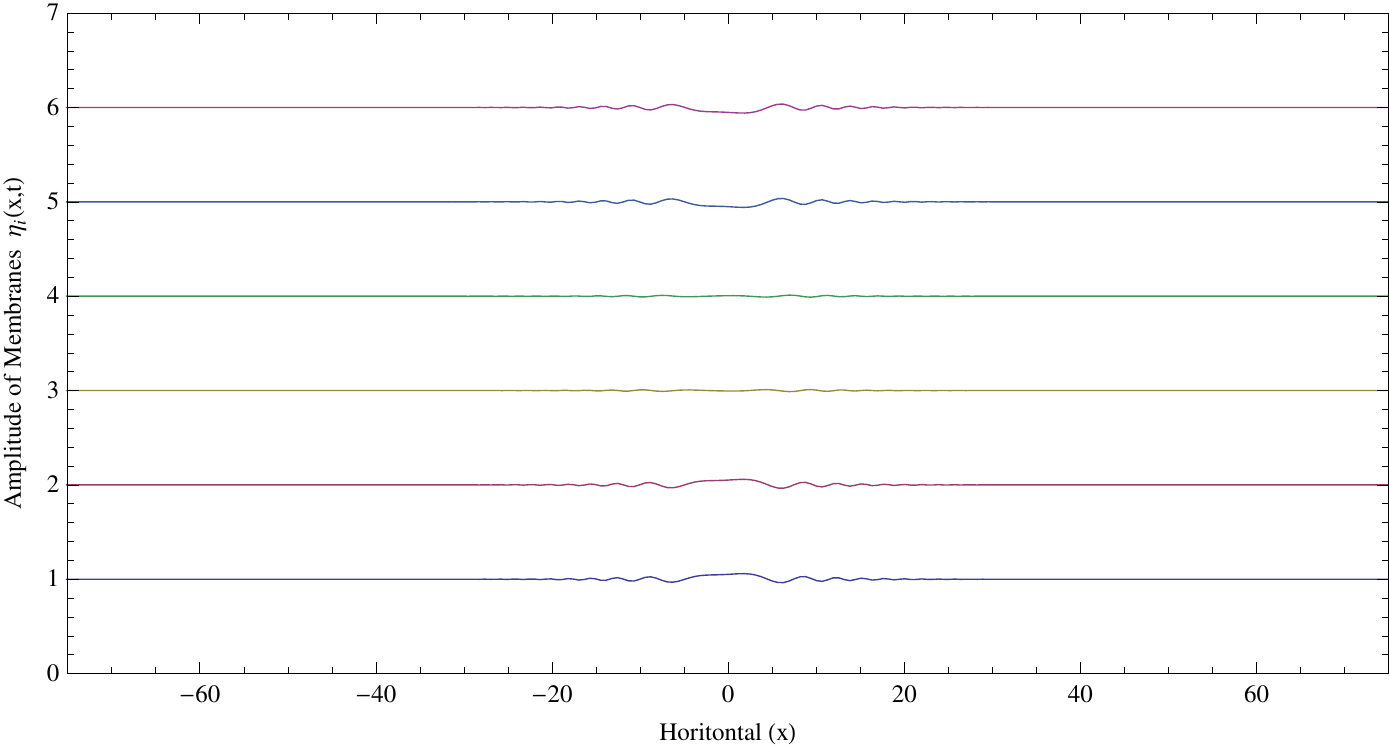}

\vspace{4mm}
\includegraphics[scale=0.45]{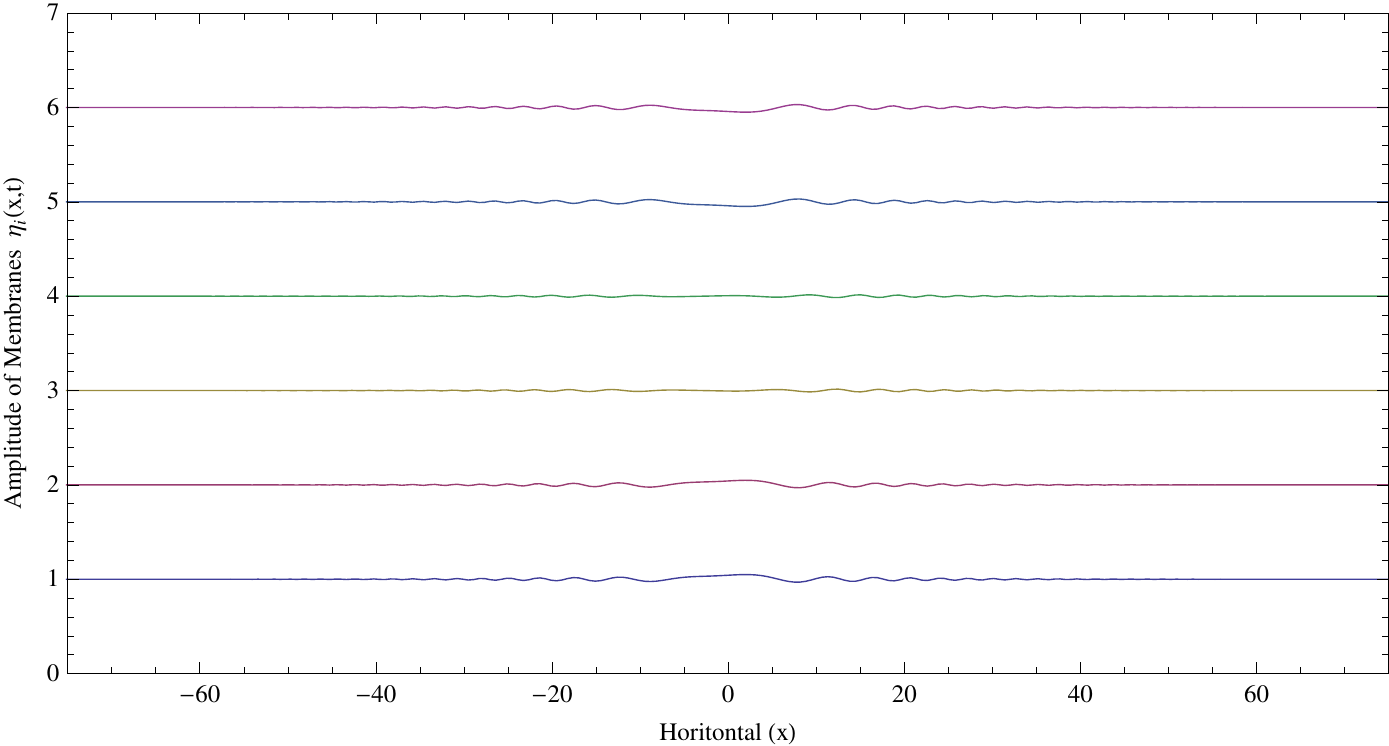}\hspace{4mm}\includegraphics[scale=0.45]{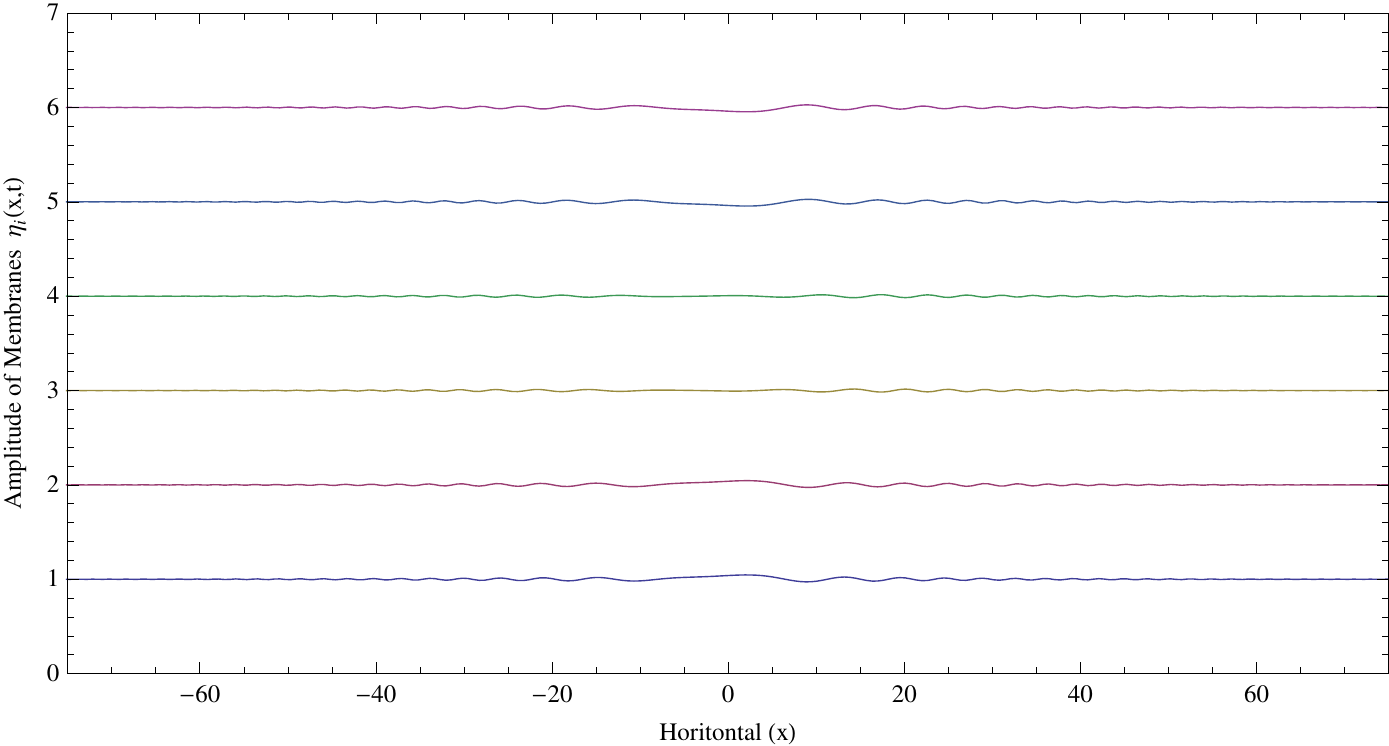}
\caption{Numerical implementation of of solution to the linear Cauchy problem with $n=6$. Four elastic plates are given Gaussian initial data, the seperation is uniform and the mean flow between each plate is $U=0.1$. The plots are evaluated at $t=0,1,2,3$ respectively. Note the amplitude of the disturbances decay, indicating short time stability of this flow configuration.}
\vspace{1cm}
\includegraphics[scale=0.45]{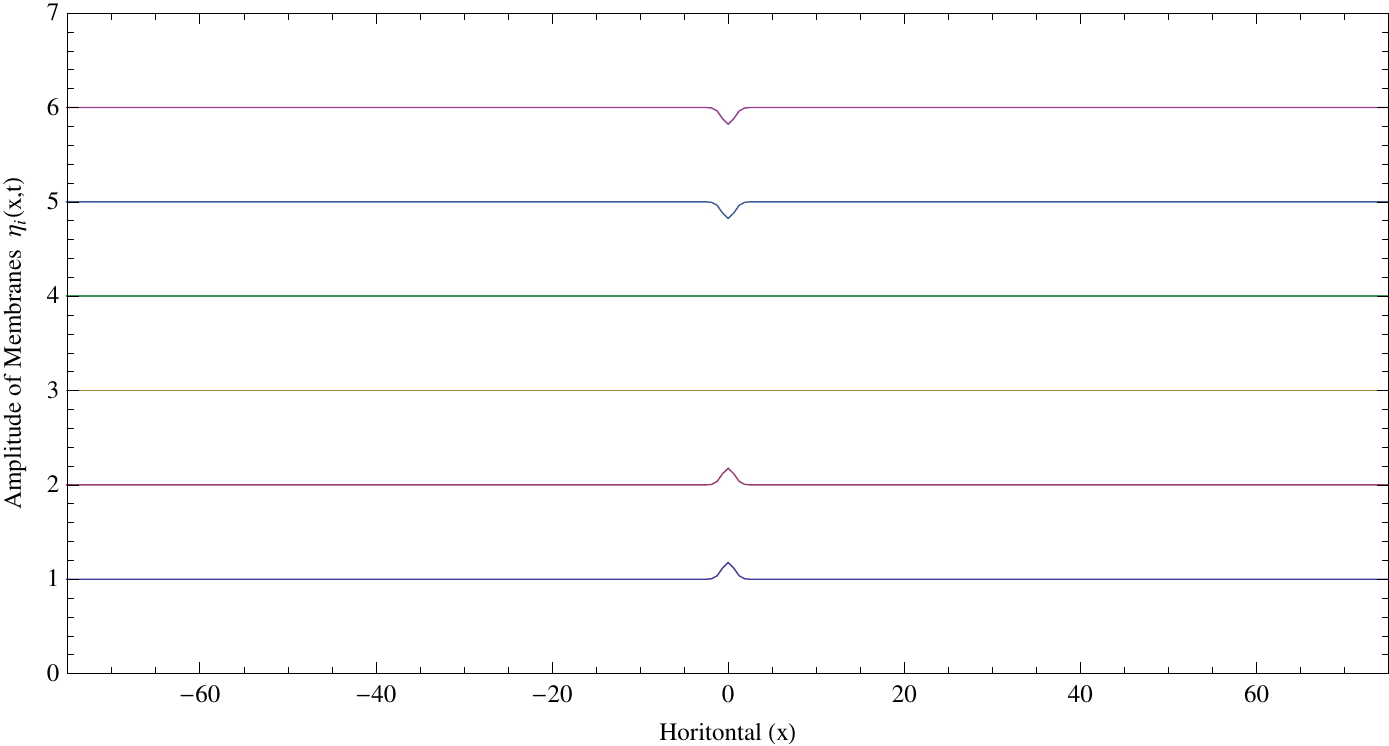}\hspace{4mm}\includegraphics[scale=0.45]{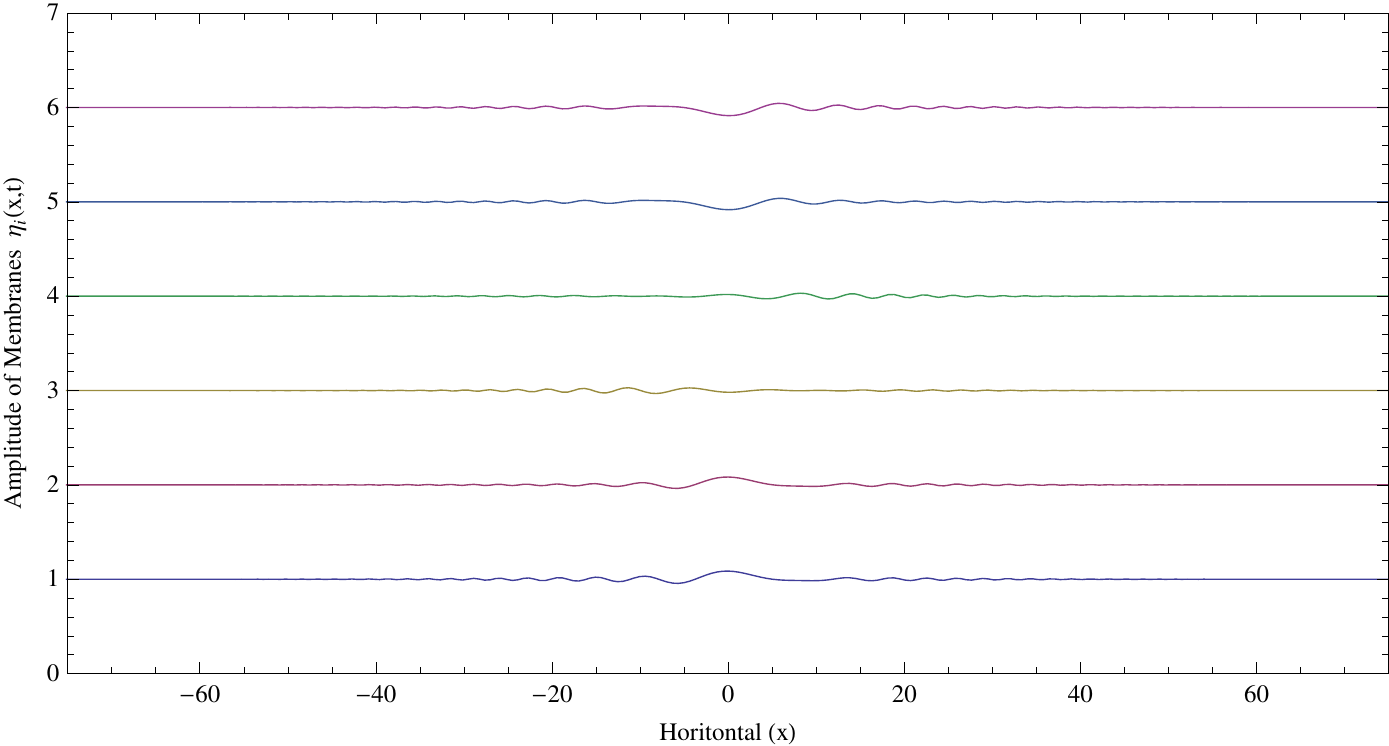}

\vspace{4mm}
\includegraphics[scale=0.45]{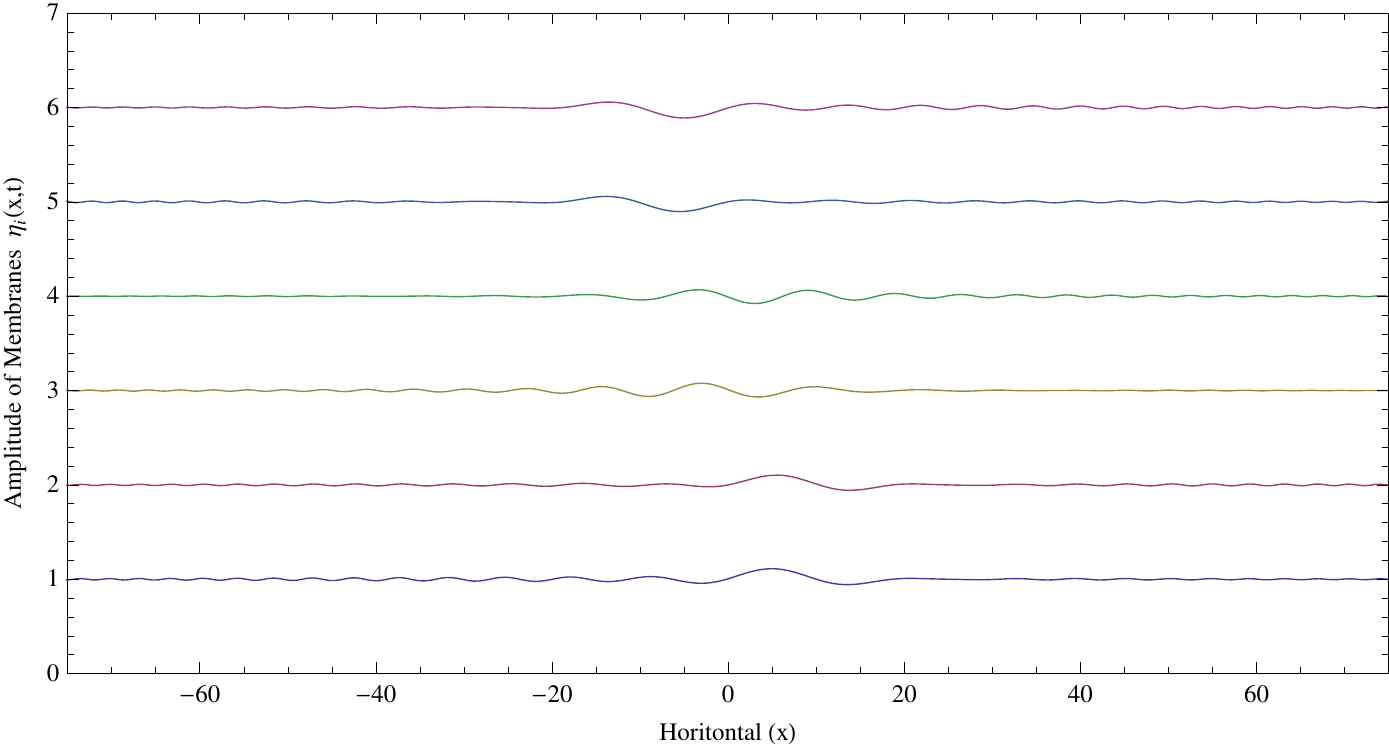}\hspace{4mm}\includegraphics[scale=0.45]{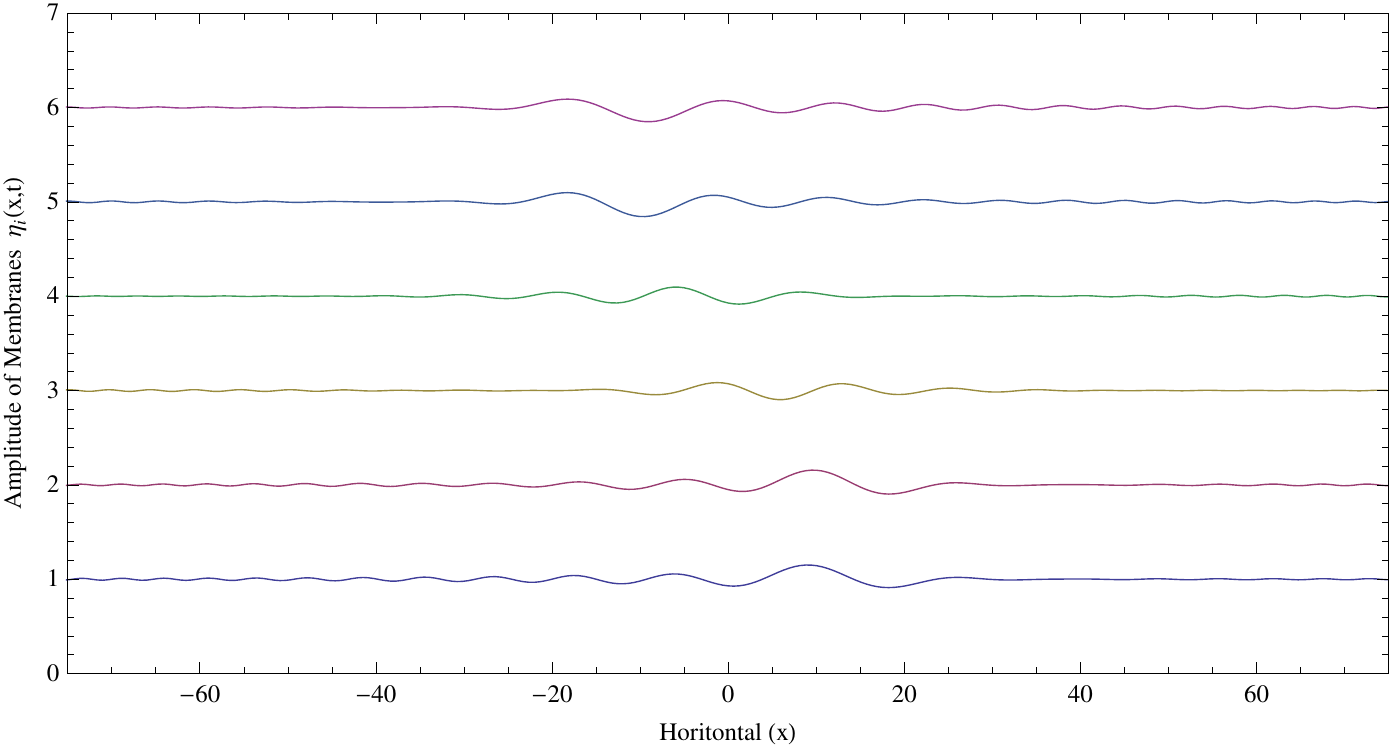}
\caption{The same numerical implementation as in Figure \ref{n6_output_stable}, but with the mean flows given by $U=(+0.4,+0.4,+0.4,0,-0.4,-0.4,-0.4)$.
In contrast to the previous case, we see the amplitutde of the disturbances increasing indicating the instability of the flow configuration.}
\end{center}
\end{figure}
This result is, of course, local. One could not hope for a global well-posedness for arbitrary mean flow velocities $U$, based purely on physical grounds. If, for instance, the magnitudes of the velocities were too great, or were in opposing directions, small instabilities would rapidly develop. See Figures 2 and 3 for an example of this phenomena.

Using standard Sobolev theory, we observe the continuous embedding:
\[ H^2_{ \dd x} (\mathbf{R}) \hookrightarrow C^1_0(\mathbf{R}), \]
where $C^1_0(\mathbf{R})$ denotes the space of continuously differentiable functions which decay at infinity. As such, we deduce that local well-posedness is achieved if the initial data is continuously differentiable with decay at infinity, possibly after redefinition on a set of measure zero.

\section{Stability}
The long time stability is determined by the Green's function of the underlying system of PDEs. Our non-local formulation has already provided us with the Green's function: from equation \eqref{evoeqn2} we see that it is given by $G \in \mathrm{Mat}_{2n}(\mathbf{C})$, where:
\begin{equation} G(x,t) =\frac{1}{2\pi} \int \exp( t\mathcal{G})(k)\, \dd k \label{expint} \end{equation}
where we have defined the matrix $\mathcal{G}\in \mathrm{Mat}_{2n}(\mathbf{C})$ by:
\begin{equation} \mathcal{G}(x,t;k) = \ii k \left(\frac{x}{t}\right) \mathbf{I}_{2n} + \mathcal{M}(k). \label{curlyG}\end{equation}
The problem is asymptotically stable if and only if $G(x,t)\rightarrow 0$ as $t\rightarrow \infty$ for each fixed $V=x/t$. It is clear that the asymptotic behaviour of $G(x,t)$ is determined by the spectrum of $\mathcal{G}$, denoted $\sigma(\mathcal{G})$. In particular, the leading order term in the asymptotic expansion will be determined by the eigenvalue with maximal real part. If $\Re \lambda > 0$ for any $\lambda\in \sigma(\mathcal{G})$, then the leading order behaviour of $G(x,t)$ has exponential growth and the problem is asymptotically unstable. In the special case where $\sigma (\mathcal{G})\in \ii \mathbf{R}$, the leading order of $G$ can be computed via the method of stationary phase, which gives $G(x,t)\sim t^{-1/2}$ as $t\rightarrow \infty$, indicating a stable configuration\footnote{Here we have assumed the relevant matrix is diagonalisable, which turns out to be the case when $\sigma(\mathcal{G})\subset \ii \mathbf{R}$.}. Alternatively one can view this as a consequence of the Riemann-Lebesgue lemma.

\begin{theorem}\label{stability}
The linear problem is asymptotically stable if and only if there is no mean flow, i.e. $U_i=0$ for $i = 0, \ldots, n$.
\end{theorem}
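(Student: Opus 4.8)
The idea is to turn asymptotic stability into a spectral condition on $\mathcal{M}(k)$ and then to settle that condition directly from the explicit tridiagonal formulae of Lemma~\ref{matriceslem}. Since $\mathcal{G}(x,t;k)=\mathrm{i}k(x/t)\mathbf{I}_{2n}+\mathcal{M}(k)$ differs from $\mathcal{M}(k)$ only by a purely imaginary scalar matrix, $\Re\,\sigma(\mathcal{G})=\Re\,\sigma(\mathcal{M}(k))$, so the discussion preceding the theorem gives: the problem is asymptotically stable iff $\sigma(\mathcal{M}(k))\subset\mathrm{i}\R$ for every $k\in\R$. Conjugating the quadratic eigenvalue relation $\big(\lambda^2\mathcal{A}(k)+2\mathrm{i}\lambda\mathcal{C}(k)-k\mathcal{B}(k)+k^4\mathbf{I}_n\big)u=0$ and using that $\mathcal{A},\mathcal{B},\mathcal{C}$ are real shows $\lambda\in\sigma(\mathcal{M}(k))\Rightarrow-\bar\lambda\in\sigma(\mathcal{M}(k))$; hence that condition fails exactly when some eigenvalue has strictly positive real part.

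\emph{Sufficiency of $U\equiv0$.} Then $\mathcal{B}(k)=\mathcal{C}(k)=0$ and $\mathcal{M}(k)=\left(\begin{smallmatrix}0 & \mathbf{I}_n\\ -k^4\mathcal{A}(k)^{-1} & 0\end{smallmatrix}\right)$, whose eigenvalues satisfy $\lambda^2\in\sigma\!\big(-k^4\mathcal{A}(k)^{-1}\big)$; as $\mathcal{A}(k)\succ0$ by Lemma~\ref{inverse}, these are $\le0$, so $\lambda\in\mathrm{i}\R$. For $k\ne0$ the matrix $\mathcal{A}(k)$ is tridiagonal with non-zero sub-diagonal, hence has simple spectrum, so the $2n$ values $\pm\mathrm{i}k^2\sqrt{\mu_j}$ (the $\mu_j>0$ being the eigenvalues of $\mathcal{A}(k)^{-1}$) are distinct and $\mathcal{M}(k)$ is diagonalisable; the stationary-phase estimate $G=O(t^{-1/2})$ quoted above then applies, the single point $k=0$ (where $\mathcal{M}(0)$ is nilpotent) being irrelevant to the decay of the integral \eqref{expint}.

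\emph{Necessity of $U\equiv0$.} I argue the contrapositive: if some $U_j\ne0$ I produce $k\in\R$ with an eigenvalue in the open right half-plane. Writing $\lambda=\mathrm{i}\omega$, this reduces to: the real symmetric pencil $\mathcal{Q}_k(\omega)=\omega^2\mathcal{A}(k)+2\omega\mathcal{C}(k)+\big(k\mathcal{B}(k)-k^4\mathbf{I}_n\big)$ has a non-real root, which I obtain for small $k>0$. Let $r$ be the number of distinct values among $U_0,\dots,U_n$. If $r\ge2$: using $\coth\theta\sim1/\theta$, $\mathrm{csch}\theta\sim1/\theta$ one has $k^2\mathcal{A}(k)\to\mathcal{A}_0$, $k\mathcal{C}(k)\to\mathcal{C}_0$, $k\mathcal{B}(k)\to\mathcal{B}_0$ as $k\to0^+$, so the degree-$2n$ polynomial $\det\mathcal{Q}_k(k\hat\omega)$ converges coefficientwise to $\det P(\hat\omega)$, where
\[ P(\hat\omega)=\hat\omega^2\mathcal{A}_0+2\hat\omega\mathcal{C}_0+\mathcal{B}_0=\sum_{i=0}^n\frac{(\hat\omega+U_i)^2}{\Delta h_i}\,e_ie_i^{\!\top},\]
with $e_0=\varepsilon_1$, $e_i=\varepsilon_{i+1}-\varepsilon_i$ $(1\le i\le n-1)$, $e_n=-\varepsilon_n$ and $\varepsilon_j$ the standard basis of $\R^n$. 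Since any $n$ of the $n+1$ vectors $e_i$ are linearly independent, a Cauchy--Binet expansion of $\det P(\hat\omega)=\det\!\big(E\,\mathrm{diag}(\tfrac{(\hat\omega+U_i)^2}{\Delta h_i})\,E^{\!\top}\big)$ exhibits it as a sum, with strictly positive coefficients, of the products $\prod_{i\in I}(\hat\omega+U_i)^2$ over the $n$-subsets $I$; no cancellation occurs, $\deg\det P=2n$, and its real zeros are exactly the $-\mu$ with $m_\mu:=\#\{i:U_i=\mu\}\ge2$, of multiplicity $2(m_\mu-1)$, totalling $2((n+1)-r)$. Hence $\det P$ has $2(r-1)\ge2$ non-real zeros, each of which perturbs to a non-real zero of $\det\mathcal{Q}_k(k\,\cdot\,)$ for small $k$. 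If instead $r=1$, say $U_i\equiv U\ne0$: then $\mathcal{B}(k)=U^2\mathcal{D}(k)$, $\mathcal{C}(k)=U\mathcal{D}(k)$ where $\mathcal{A}(k)=\mathbf{I}_n+k^{-1}\mathcal{D}(k)$ and $\mathcal{D}(k)$ is the $\coth/\mathrm{csch}$ tridiagonal matrix ($\mathcal{D}(k)\succeq0$ for $k>0$, as in Lemma~\ref{inverse}), so $\mathcal{Q}_k(\omega)=(\omega^2-k^4)\mathbf{I}_n+k^{-1}(\omega+Uk)^2\mathcal{D}(k)$; an eigenvalue $d$ of $\mathcal{D}(k)$ contributes the factor $(k+d)\omega^2+2dUk\,\omega+(dU^2k^2-k^5)$, of discriminant $4k^3\big(k^3-d(U^2-k^2)\big)$, which is negative as soon as $d>\tfrac{k^3}{U^2-k^2}$; taking $d=d_{\max}(k)\to\infty$ (since $k\mathcal{D}(k)\to\mathcal{A}_0$) this holds for all small $k<|U|$, so again $\mathcal{Q}_k$ has a non-real root. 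In every case $\mathcal{M}(k)$ has an eigenvalue off $\mathrm{i}\R$, hence one in the open right half-plane, so the problem is asymptotically unstable.

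The technical heart is the case $r\ge2$: checking the Cauchy--Binet bookkeeping (in particular that the manifestly nonnegative terms cause no cancellation, so the real-root multiplicities are exactly $2(m_\mu-1)$), and then justifying the $k\to0$ perturbation of the limiting pencil, including at the exceptional $k$ where $k\mathcal{B}(k)-k^4\mathbf{I}_n$ is singular.
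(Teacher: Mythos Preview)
Your argument is correct, and in the ``sufficiency'' direction it coincides with the paper's: both use $\mathcal{A}(k)\succ 0$ to get $\sigma(\mathcal{M})\subset\ii\R$, and the Jacobi (simple-spectrum) property of $\mathcal{A}$ for diagonalisability. The reduction ``stable $\Leftrightarrow$ $\sigma(\mathcal{M}(k))\subset\ii\R$ for all $k$'' and the symmetry $\lambda\mapsto-\bar\lambda$ are also the same in spirit as the paper's trace argument.

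Where you genuinely diverge is in the ``necessity'' (instability for $U\not\equiv 0$) step. The paper does \emph{not} split on the number $r$ of distinct $U_i$ and does not pass to a limiting pencil. Instead it applies the same diagonal-dominance/AM--GM estimate used in Lemma~\ref{inverse} directly to the real symmetric matrix $\mu^2\mathcal{A}(k)+2\mu\mathcal{C}(k)+k\mathcal{B}(k)-k^4\mathbf{I}_n$: bounding the off-diagonal contributions row by row reduces the question to a family of scalar quadratics in $\mu$, and requiring each discriminant to be negative yields the single inequality \eqref{ineq}, which the paper observes holds on a nonempty interval of small $k$ whenever the $U_i$ are not all zero. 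That route is shorter and avoids the Cauchy--Binet bookkeeping and the $k\to 0$ perturbation argument altogether. On the other hand, your approach extracts more structure: the factorisation $P(\hat\omega)=E\,\mathrm{diag}\!\big((\hat\omega+U_i)^2/\Delta h_i\big)E^{\top}$ and the exact count $2(r-1)$ of non-real limiting roots give quantitative information about \emph{how many} unstable branches emerge near $k=0$, and your separate treatment of $r=1$ (via the identity $\mathcal{A}=\mathbf{I}+k^{-1}\mathcal{D}$, $\mathcal{C}=U\mathcal{D}$, $\mathcal{B}=U^2\mathcal{D}$ and the explicit discriminant $4k^3\big(k^3-d(U^2-k^2)\big)$) is cleaner than forcing that case through a degenerate limit. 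In short: the paper's proof is a one-line positivity estimate per row; yours is a more detailed structural analysis of the small-$k$ pencil that yields additional spectral information at the cost of a case split and a continuity-of-roots step.
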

\begin{proof}
The problem is certainly unstable if there is at least one element of $\sigma(\mathcal{G})$ with positive real part, so it is sufficient to prove the existence of such an eigenvalue to prove the problem is unstable. We see from lemma \ref{matriceslem} that $\mathcal{M}(k)$ has the form:
\[ \begin{bmatrix} 0 & \mathbf{I}_{n} \\ k\left(\mathcal{A}^{-1}\mathcal{B}\right)(k) - k^4 \mathcal{A}^{-1}(k) & -2\ii (\mathcal{A}^{-1}\mathcal{C})(k) \end{bmatrix}. \]
Since $(\mathcal{A}^{-1}\mathcal{C})(k)\in \mathrm{Mat}_{n}(\mathbf{R})$, we see that $\Re \left( \mathrm{Tr} \mathcal{M}\right)=0$ and so from \eqref{curlyG}:
\[ \Re \left( \mathrm{Tr}\, \mathcal{G}\right)\,\, \equiv \sum_{\lambda \in \sigma(\mathcal{G})} \Re \lambda = 0. \]
It follows that either (a) all eigenvalues of $\mathcal{G}$ are purely imaginary for each $k\in\mathbf{R}$ or (b) the real parts of at least two members of $\sigma(\mathcal{G})$ have opposite sign for at least one value of $k\in\mathbf{R}$. If (b) is true, we are done, since there must be at least one member of $\sigma(\mathcal{G})$ which has positive real part. 

Now consider the case in which (a) holds, so that $\sigma(\mathcal{G}) \subset \ii \mathbf{R}$, or equivalently $\sigma(\mathcal{M})\subset \ii \mathbf{R}$. For $\lambda \in \sigma(\mathcal{M})$, set $\lambda = \ii \mu$, where $\mu \in \mathbf{R}$. A straightforward computation gives the characteristic polynomial for $\mathcal{M}$:
\begin{equation} \chi_{\mathcal{M}}(\ii\mu) = \mathrm{det} \left( \mu^2\mathcal{A}+ 2\mu \mathcal{C} + k \mathcal{B}- k^4 \mathbf{I}_n  \right). \label{detmatrix} \end{equation}
If we show that the matrix in parenthesis appearing in \eqref{detmatrix} is positive definite for some range of $k$ (for all real $\mu$), then $|\chi_{\mathcal{M}}(\ii \mu)|>0$ in that range, for all real $\mu$, and so there are no purely imaginary eigenvalues. Using a similar argument to that in the proof of lemma \ref{inverse}, one finds the matrix appearing in \eqref{detmatrix} is positive definite for all $\mu\in\mathbf{R}$ if:
\begin{multline} \big[ U_i \tanh\left(\tfrac{1}{2}k\Delta h_i\right) + U_{i-1} \tanh\left(\tfrac{1}{2}k\Delta h_{i-1}\right)\big]^2 \\
 < \big[-k^3 +  U_{i}^2 \tanh\left(\tfrac{1}{2}k\Delta h_i\right)+U_{i-1}^2 \tanh\left(\tfrac{1}{2}k\Delta h_{i-1}\right)\big] \\
\times \big[ k + \tanh\left(\tfrac{1}{2}k\Delta h_i\right) + \tanh\left(\tfrac{1}{2}k\Delta h_{i-1}\right) \big] \label{ineq}
\end{multline}
for each $i=1,\ldots , n$. This comes from examining the discriminant of the relevant quadratic in $\mu$. There is always some non-empty $k$-interval in which \eqref{ineq} is satisfied (choose $k$ sufficiently small), \emph{unless} $U_i=0$ for $i = 0, \ldots, n$, in which case \eqref{ineq} cannot be satisfied for any $k$. We deduce that $\sigma(\mathcal{M})$ always contains an element with positive real part, unless $U_i=0$ for $i = 0, \ldots, n$ in which case:
\[ \sigma(\mathcal{M})= \left\{ \pm \ii \frac{k^2}{\sqrt{\lambda(k)}}: \lambda (k) \in \sigma(\mathcal{A}) \right\}, \]
which is well defined since $\mathcal{A}\succ 0$, by lemma \ref{inverse}. In addition, we note that $\mathcal{A}$ is a Jacobi matrix so has $n$ distinct eigenvalues. It follows that in the case $U_i=0$ for $i = 0, \ldots, n$, $\mathcal{G}(k)$ has $2n$ purely imaginary, distinct eigenvalues. In particular, $\mathcal{G}(k)$ is diagonalizable for each $k\in\mathbf{R}$. We conclude that the problem is asymptotically stable if and only if there is no flow, in which case $G(x,t) \sim t^{-1/2}$ as $t\rightarrow \infty$, with $V=x/t$ fixed.
\end{proof}
This result can be reconciled with the well-posedness result in theorem \ref{well-posed}. In the case that the $U_i$ are not all zero, we know that $\sigma(\mathcal{M})$ is not purely imaginary for all $k\in\mathbf{R}$, but rather $\sigma(\mathcal{M})$ is becomes purely imaginary for $|k|$ large. As such, the optimal bound on $\|\exp (\mathcal{M} t)\|$ was of the form in \eqref{semigroupbound}, which led to the \emph{local} well-posedness result. If we consider the case $U_i=0$ for $i = 0, \ldots, n$, then $\sigma(\mathcal{M}) \subset \ii \mathbf{R}$, and the optimal bound in this case is found by using:
\begin{equation} \| \exp(\mathcal{M}t) \| \leq \kappa (\mathcal{M}) e^{\alpha (\mathcal{M}) t}, \label{betterbound} \end{equation}
where $\alpha(\mathcal{M})$ is the spectral abscissa of the matrix $\mathcal{M}$, defined by:
\[ \alpha(\mathcal{M}) = \sup_{\lambda \in \sigma(\mathcal{M})} \Re \lambda \]
and $\kappa (\mathcal{M})=\|\mathcal{V}\| \| \mathcal{V}^{-1}\|$ is the condition number of $\mathcal{M}$, where $\mathcal{V}$ is the matrix of eigenvectors of $\mathcal{M}$. After some standard asymptotic estimates, one can show:
\[ \mathcal{V}(k) \sim \begin{bmatrix} \ii k^2 \mathbf{I}_n & - \ii k^2 \mathbf{I}_n \\ \mathbf{I}_n & \mathbf{I}_n \end{bmatrix}. \]
Using this, it is straightforward to show that $\kappa (\mathcal{M})$ is $\mathcal{O}(k^2)$ for large $k$. Finally, using the estimate in \eqref{betterbound} and the fact that $\alpha(\mathcal{M})=0$, we have:
\[ \| \exp(\mathcal{M} t)(k) \| \leq C_n \langle k \rangle^2 \]
where $C_n$ is a constant depending only on $n$. In summary, for that case of no mean flow, so that $U_i=0$ for $i = 0, \ldots, n$, we have following global well-posedness result.

\begin{theorem}[Global well-posedness in $L^2_{\dd x}(\mathbf{R})^{\times 2n}$ in the case of zero mean flow]
 Let the $n$ elastic plates be distributed throughout the channel $\Omega$, and suppose there is no mean flow so that $U_i=0$ for $i = 0, \ldots, n$. Then if the initial amplitudes $\eta_0$ and velocities $\partial_t\eta_0$ belong to $H^s_{\dd x}(\R )$, with $s\geq2$, the Cauchy problem is globally well-posed in $L^2_{\dd x}(\R )$. 
\end{theorem}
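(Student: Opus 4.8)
The plan is to reuse the Fourier-space strategy of the proof of Theorem~\ref{well-posed}, the essential new input being that when $U_i\equiv 0$ the spectrum of $\mathcal{M}(k)$ lies on $\ii\R$ for \emph{every} $k\in\R$, not merely for $|k|$ large, so that the semigroup bound on $\exp(t\mathcal{M})(k)$ becomes uniform in $t$. By Parseval's theorem it suffices to establish an estimate $\|\exp(t\mathcal{M})(k)\|\le C_n\langle k\rangle^2$ with $C_n$ independent of $t$; granting this, $\|(\eta,\partial_t\eta)\|_{L^2}=\|\exp(t\mathcal{M})(k)\hat\eta_0\|_{L^2}\le C_n\|\langle k\rangle^2\hat\eta_0\|_{L^2}\le C_n\|(\eta_0,\partial_t\eta_0)\|_{H^s}$ for any $s\ge 2$ (here $\hat\eta_0$ abbreviates the Fourier transform of the full initial-data vector), which is exactly the asserted global well-posedness.

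First I would record the structure of $\mathcal{M}$ in the zero-flow case. Setting $\mathcal{B}=\mathcal{C}=0$ in Lemma~\ref{matriceslem} gives $\mathcal{M}(k)=\left(\begin{smallmatrix}0&\mathbf{I}_n\\-k^4\mathcal{A}^{-1}(k)&0\end{smallmatrix}\right)$. Since $\mathcal{A}(k)$ is a positive-definite Jacobi matrix (Lemma~\ref{inverse}) with $n$ distinct eigenvalues $\lambda_1(k)<\cdots<\lambda_n(k)$ and an orthonormal eigenbasis, $\mathcal{M}(k)$ is diagonalizable with the $2n$ distinct, purely imaginary eigenvalues $\pm\ii k^2/\sqrt{\lambda_j(k)}$, as already noted in the proof of Theorem~\ref{stability}; in particular the spectral abscissa is $\alpha(\mathcal{M}(k))=0$ for every $k\in\R$. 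Next I would invoke the elementary bound $\|\exp(\mathcal{M}t)\|\le\kappa(\mathcal{M})\,e^{\alpha(\mathcal{M})t}$ valid for a diagonalizable matrix, $\kappa$ being the condition number of its eigenvector matrix $\mathcal{V}$; with $\alpha=0$ the task reduces to showing $\kappa(\mathcal{M}(k))\le C_n\langle k\rangle^2$ uniformly in $k$. Writing $\mathcal{A}(k)=W(k)\,\mathrm{diag}(\lambda_j(k))\,W(k)^{T}$ with $W(k)$ orthogonal, the eigenvectors of $\mathcal{M}(k)$ assemble into $\mathcal{V}(k)=\left(\begin{smallmatrix}W&W\\WD_+&WD_-\end{smallmatrix}\right)$ with $D_\pm=\pm\ii k^2\,\mathrm{diag}(\lambda_j^{-1/2})$. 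By Lemma~\ref{analyticprop}, $\mathcal{A}(k)\to\mathbf{I}_n$ as $|k|\to\infty$, whence $D_\pm\sim\pm\ii k^2\mathbf{I}_n$ and $\mathcal{V}(k)\sim\left(\begin{smallmatrix}\ii k^2\mathbf{I}_n&-\ii k^2\mathbf{I}_n\\\mathbf{I}_n&\mathbf{I}_n\end{smallmatrix}\right)$, the form already quoted in the text; a direct block computation of $\mathcal{V}^{-1}$ then yields $\kappa(\mathcal{M}(k))=\mathcal{O}(k^2)$ for $|k|$ large. On any compact $k$-interval the analyticity of $\mathcal{M}$ (Lemma~\ref{analyticprop}) together with the simplicity of its eigenvalues lets one choose $\mathcal{V}(k)$ analytic and boundedly invertible, so $\kappa$ is bounded there; patching the two ranges gives the claimed uniform estimate, and with $\alpha=0$ the bound on $\exp(t\mathcal{M})(k)$ carries no exponential in $t$.

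The main obstacle is precisely the uniformity in $k$ of $\kappa(\mathcal{M}(k))\le C_n\langle k\rangle^2$. As $|k|\to\infty$ all the $\lambda_j(k)$ collapse to $1$, so the $2n$ eigenvalues of $\mathcal{M}(k)$ coalesce into two clusters near $\pm\ii k^2$, and one must verify that this near-degeneracy does not inflate $\kappa$ beyond the harmless explicit factor $k^2$ coming from the top block of $\mathcal{V}$. Here the symmetry of $\mathcal{A}(k)$ is what saves the day: its eigenbasis $W(k)$ stays orthogonal, so $\kappa(\mathcal{V})$ is governed by the condition number of $\left(\begin{smallmatrix}\mathbf{I}_n&\mathbf{I}_n\\D_+&D_-\end{smallmatrix}\right)$, which is $\mathcal{O}(k^2)$. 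A comparable check near $k=0$ again uses Lemma~\ref{analyticprop} to exclude a spurious blow-up. The remaining ingredients — the semigroup inequality and the Parseval reduction — are exactly as in Theorem~\ref{well-posed}, so no further technical work is needed beyond assembling these estimates.
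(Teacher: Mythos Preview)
Your strategy mirrors the paper's argument exactly: exploit the zero spectral abscissa when $U_i\equiv 0$, invoke the diagonalizable-matrix bound $\|\exp(t\mathcal{M})\|\le\kappa(\mathcal{M})\,e^{\alpha(\mathcal{M})t}$, and show $\kappa(\mathcal{M}(k))=\mathcal{O}(k^2)$ from the explicit asymptotic form of the eigenvector matrix. For large $|k|$ your analysis is correct and coincides with what the paper sketches in the paragraph preceding the theorem.

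The gap is at $k=0$. You assert that ``the analyticity of $\mathcal{M}$ together with the simplicity of its eigenvalues lets one choose $\mathcal{V}(k)$ analytic and boundedly invertible'' on compact intervals, and that a ``comparable check near $k=0$'' excludes a spurious blow-up. But the eigenvalues are \emph{not} simple at $k=0$: since $\mathcal{A}(k)\sim k^{-2}A_0$ as $k\to 0$ for a fixed positive-definite tridiagonal matrix $A_0$, one has $k^4\mathcal{A}^{-1}(k)=\mathcal{O}(k^6)\to 0$ and hence
\[
\mathcal{M}(0)=\begin{pmatrix}0&\mathbf{I}_n\\0&0\end{pmatrix},
\]
a nilpotent matrix with a single $2n$-fold eigenvalue at $0$. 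In your own notation $D_\pm(k)\to 0$, so the two block-columns of $\mathcal{V}(k)=\left(\begin{smallmatrix}W&W\\WD_+&WD_-\end{smallmatrix}\right)$ coalesce and $\kappa(\mathcal{V}(k))\to\infty$ as $k\to 0$. More to the point, $\exp\!\big(t\mathcal{M}(0)\big)=\mathbf{I}_{2n}+t\mathcal{M}(0)$ has operator norm growing linearly in $t$, so no estimate of the form $\|\exp(t\mathcal{M})(k)\|\le C_n\langle k\rangle^2$ with $C_n$ independent of $t$ can hold uniformly in a neighbourhood of $k=0$; the blow-up is genuine, not spurious. The paper's own discussion treats only large $|k|$ and passes over this issue in silence, so you have faithfully reproduced its line of argument --- but the extra sentence you added to cover small $k$ does not go through as written, and the uniform-in-$t$ pointwise bound you (and the paper) aim for is in fact false at $k=0$.
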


Let us now examine the result in Theorem \ref{stability}. We see that the condition that $\Re \sigma(\mathcal{G})=\emptyset$ is only violated for small $k$, i.e. only modes corresponding to long wavelengths are unstable. Let $K$ denote the interval for which \eqref{ineq} is satisfied, so that for $k\in K$ we know $\Re\,\sigma(\mathcal{M})\neq \emptyset$. To understand the instabilities it is enough to know $\mathcal{M}$ for $k\in K$, i.e. $k$ small. It is straightforward, although algebraically intensive, to show that the leading order terms in the real part of the eigenvalues of $\mathcal{M}$ are of the form:
\begin{equation} \frac{\mathcal{L}(U,\Delta h)}{|\Omega|} k + \mathcal{O}(k^2). \label{leadingorder} \end{equation}
where $\mathcal{L}$ is a bilinear function of $U=(U_0, U_1,\ldots)$ and $\Delta h=(\Delta h_0, \Delta h_1, \ldots)$ and $|\Omega|\equiv h_{n+1}-h_0$ is the channel width. For large seperations $\Delta h$, and mean flows with small velocities, this quantity is exceedingly small. As such, the growth rate of the amplitudes of the corresponding modes will be very slow. One would expect to have to wait for long times before such instabilities were realised, as indicated in Figure \ref{n6_output_stable}.


We have shown that the linear problem is asymptotically stable if and only if there is no mean flow, however it is unlikely that in practice this stability will be realised. It is well known that the naive treatment of stability by means of spectra can be misleading, the more relevant quantity being the pseudospectrum:
\[ \sigma_\epsilon (\mathcal{M}) = \{ \lambda \in \mathbf{C} : \| (\mathcal{M}-\lambda \mathbf{I})^{-1}\| > \epsilon^{-1} \}. \]
The pseudospectrum contains information about how the spectrum of the associated operator changes under small perturbations, in the sense that $\sigma_\epsilon (\mathcal{M})$ contains information about the spectrum of the operator $\mathcal{M}+\mathcal{E}$, where $\|\mathcal{E}\|<\epsilon$. See figure \ref{pseudo_output} for some plots of $\sigma_\epsilon (\mathcal{M})$ for given $\epsilon$. Recall that the pseudospectrum of a normal operator $\mathcal{N}$ is:
\[ \bigcup_{\lambda \in \sigma(\mathcal{N})} B_\epsilon(\lambda), \]
i.e. one draws balls of radius $\epsilon$ about each element of the spectrum of $\mathcal{N}$. In this sense, the deviation from this form of pseudospectrum is a measure of non-normality. See \cite{trefethen2005spectra} for several different, but equivalent definitions of pseudospectra and non-normality. It is clear from Figure \label{pseudo_output} that for $k=1$ the operator is approximately normal, and as $|k|$ decreases the non-normality of $\mathcal{M}$ increases. Indeed, the width $\sigma_\epsilon(\mathcal{M})$ is approximately $8\epsilon$ in the final output. This indicates that whilst theoretically the instabilities of the long-wavelength modes grow very slowly, in practice the instabilities will grow at a much faster rate since $\Re \, \sigma_\epsilon(\mathcal{M})$ approaches $\mathcal{O}(1)$ for small $k$.
\begin{figure}\label{pseudo_output}
\begin{center}
\includegraphics[scale=0.47]{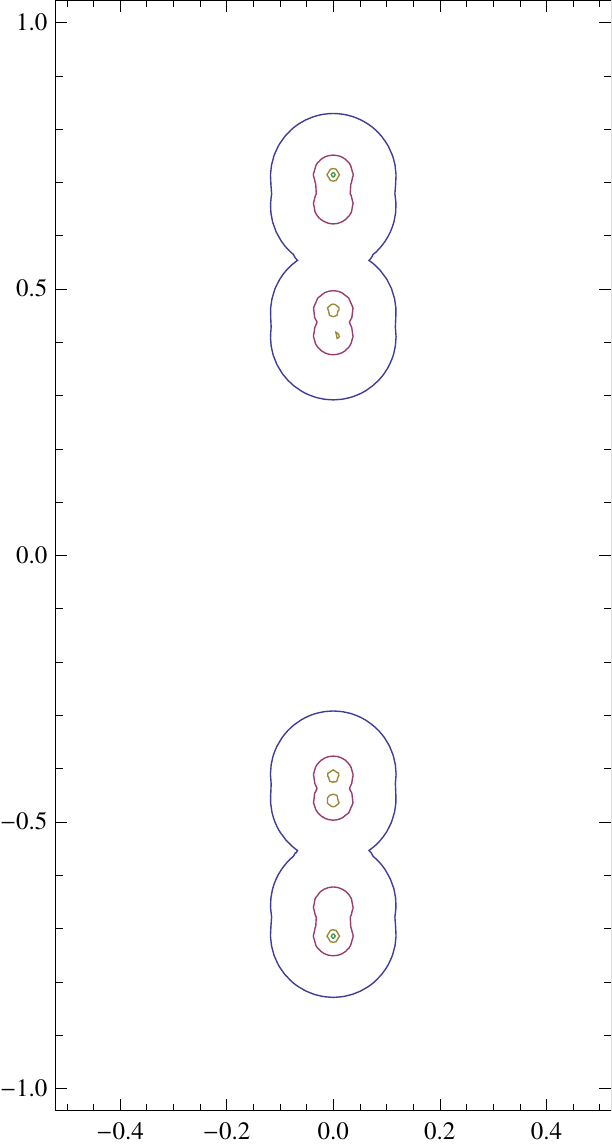}\hspace{4mm}\includegraphics[scale=0.47]{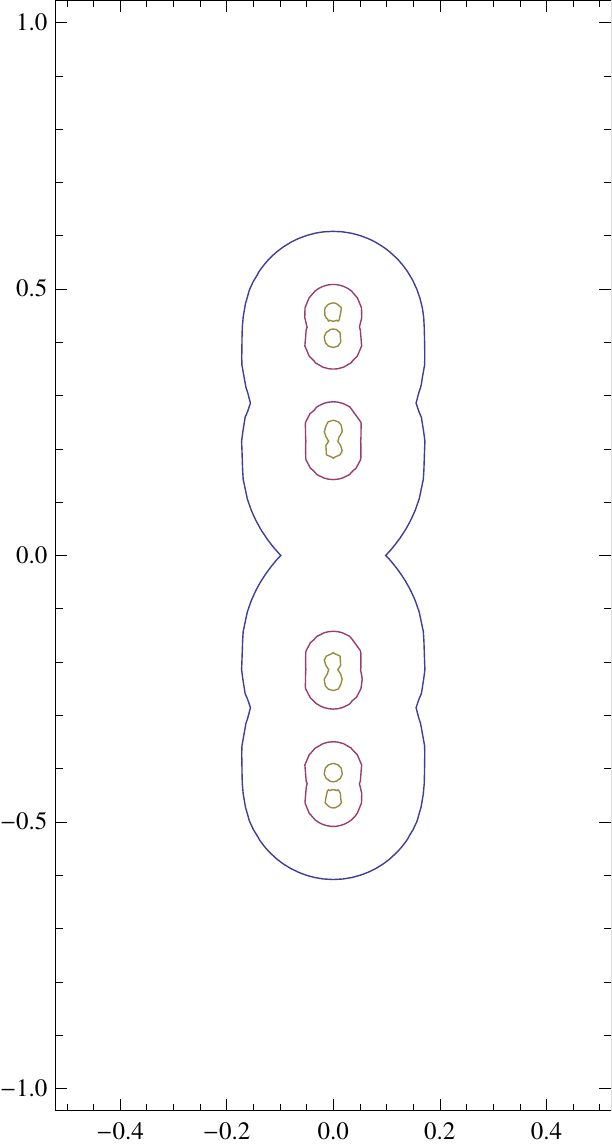}\hspace{4mm}\includegraphics[scale=0.47]{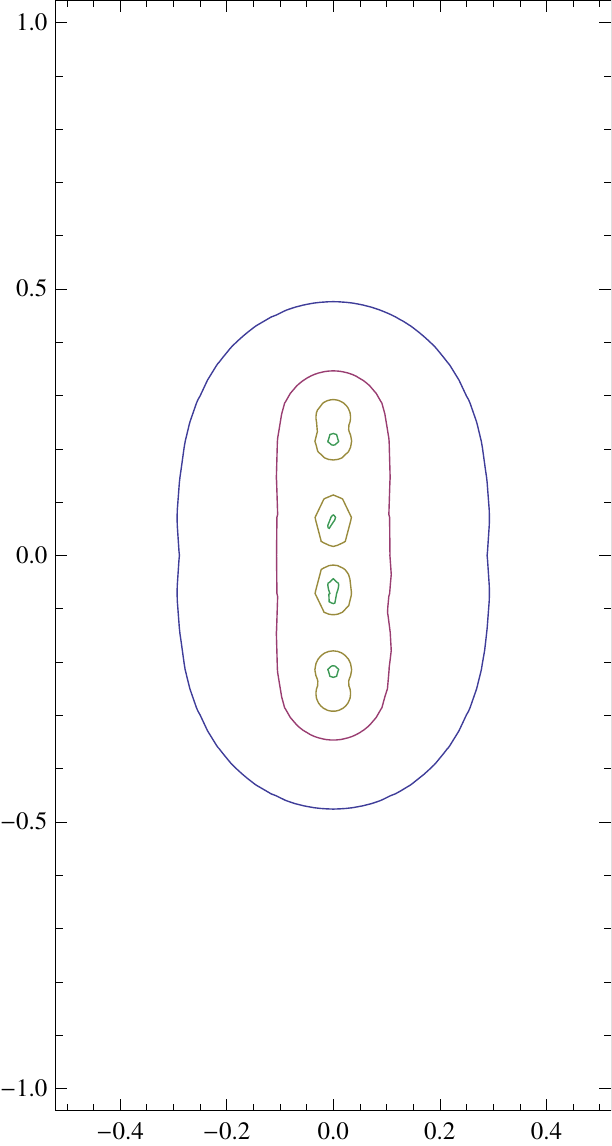}\hspace{4mm}\includegraphics[scale=0.47]{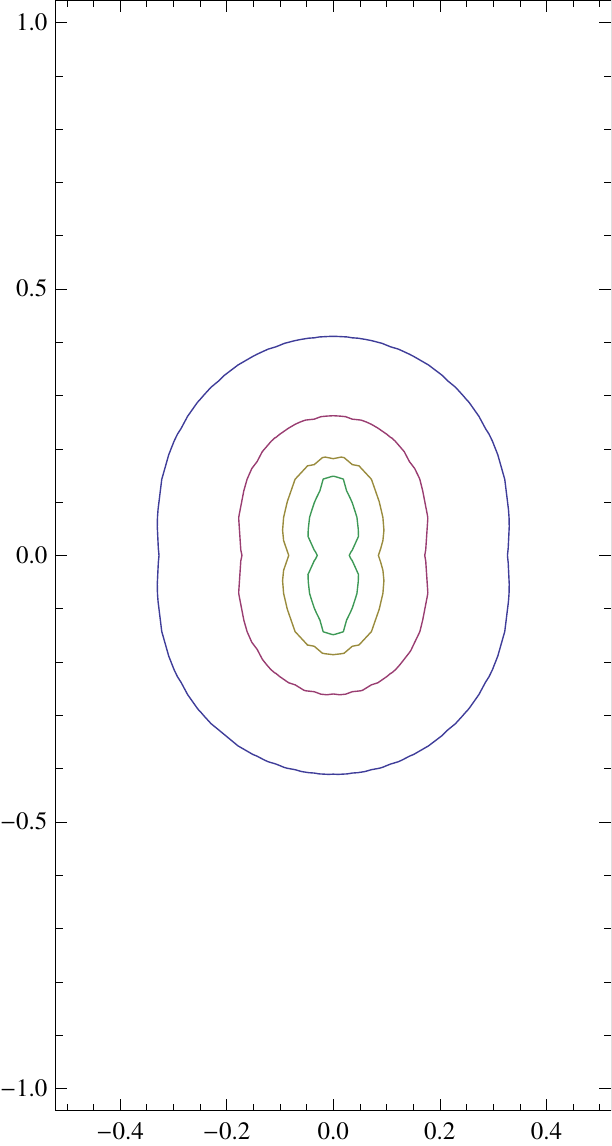}
\caption{Plots of the pseudospectrum of $\mathcal{M}(k)$ with $n=4$, for $k$ decreasing from $1$ to $0$. Plate seperations are all $10$, and $U=(+0.15,+0.3,0,-0.3,-0.15)$. The plots are given by $\|(\mathcal{M}-\lambda \mathbf{I})^{-1}\|=\epsilon^{-1}$, where $\epsilon = 10^{-1}, 10^{-1.5}, 10^{-2},10^{-2.5}$. The width of $\sigma_\epsilon (\mathcal{M})$ increases as $k$ decreases, indicating the non-normality of the matrix $\mathcal{M}$ for small $k$.}
\end{center}
\end{figure}

At this stage we recall a result of Chrighton \& Oswell \cite{crighton1991flm}, who proved that in the case of a single elastic plate driven by one mean flow (above) the configuration was convectively stable for $U<U_c =0.074$. However, the configuration in their work differs considerably from that considered here -- most importantly the flow is not bound above (i.e. their channel is infinite in width). In our problem, the fluid in $\Omega_i$ transfers vertical momentum betwen $\eta_i$ and $\eta_{i+1}$, and in particular the rigid boundaries of the channel reflect back waves carrying vertical momentum. In the case of non-zero mean flow, as the fluid interacts with the plates it is able increase the vertical component of the momentum -- this process continues and instabilities are liable to grow in time. However, in \cite{crighton1991flm} the infinite vertical extend of the channel means that disturbances can ``escape'' from the channel, since the fluid can carry vertical momentum away from the plate and out to infinity, and the aforementioned incease in vertical momentum does not occur. One can heuristically reconcile their work with ours by taking $|\Omega|\rightarrow \infty$ in \eqref{leadingorder}.

\section{Variations of the Problem}
In this section we briefly discuss some variations and extensions of our problem. For the sake of brevity, some of the details are omitted, but all the would-be calculations are similar to those seen in the prequal.

\subsection{An Infinite Channel}
In some certain circumstances it is more appropriate to treat the surfaces $\mathcal{B}_\pm$ at infinity. In this case, the analysis changes on the top and bottom domains. Recall that in the non-local formulation on the top domain, the initial step \eqref{divint_top} involved the integral:
\begin{multline*} \int_{\Gamma_n} e^{-\ii k x + \kappa y}(\kappa  \partial_x \phi_i-\ii k  \partial_y\phi_i ,\kappa  \partial_y\phi_i + \ii k \partial_x\phi_i)\cdot N(\Gamma_n)\, \dd x \\
- \int_{\mathcal{B}^+}  e^{-\ii k x + \kappa y}(\kappa  \partial_x \phi_i-\ii k  \partial_y\phi_i ,\kappa  \partial_y\phi_i + \ii k \partial_x\phi_i)\cdot N(\mathcal{B}^+)\, \dd x =0.
\end{multline*}
In the case of the infinite channel, we are required to take the limit $y\rightarrow \infty$ in the second integral and as such we need to impose $\kappa = k<0$. In this case, the integral equation in the top domain $\Omega_n$ becomes:
\begin{equation} \int e^{-\ii k x + k\eta_n}\Big(  \partial_{U_n}\eta_n  + \ii  \partial_x\xi_n^- \Big)\, \dd x =0, \label{k<0} \end{equation}
with the restriction $k<0$. Alternatively, changing $k\mapsto -k$, \eqref{k<0} becomes:
\begin{equation} \int e^{+\ii k x - k\eta_n}\Big(  \partial_{U_n}\eta_n  +\ii  \partial_x\xi_n^- \Big)\, \dd x =0, \qquad k>0. \label{infinitetop} \end{equation}
The corresponding equation for the bottom domain $\Omega_0$ is:
\begin{equation} \int e^{-\ii k x +k\eta_1}\Big( \partial_{U_0}\eta_1  +\ii  \partial_x\xi_0^+ \Big)\, \dd x =0, \qquad k>0.  \label{infinitebottom}\end{equation}
The analysis on the internal domains will be exactly the same as in the case of the finite channel. The restrictions that $k>0$ in equations \eqref{infinitetop} and \eqref{infinitebottom} does not cause any restrictions, at least in the linear case, because a \emph{real} function can be recovered from the knowledge of its Fourier transform on $k>0$. This follows from the observation:
\begin{align*} \int e^{\ii kx} \hat{f}(k)\, \dd k &= \int_0^{\infty} \left[ e^{\ii kx} \hat{f}(k) + e^{-\ii kx}\hat{f}(-k)\right] \dd k \\
 &= \int_0^\infty \left[ e^{\ii kx} \hat{f}(k) + e^{-\ii kx} \overline{\hat{f}(k)}\,\right ] \dd k,
\end{align*}
which is a consequence of the reality of $f$ and the definition of the Fourier transform. Since the functions in our problem are manifestly real, we see that the restriction that $k>0$ on the top and bottom domains is permissable. A similar restriction was made in \cite{ashton2008nms} in which the authors prove well-posedness results for the initial value problem for a single fluid loaded elastic plate on both the full and half lines.

\subsection{Non-zero Vorticity}
In the presence of non-zero vorticity, one cannot invoke a scalar velocity potential, so the methods outlined thus far are inappropriate. In this case, one refers to the stream function $\psi$ defined so that $\psi_y$ represents the horizontal velocity, and $-\psi_x$ represents the vertical velocity of the fluid. For a given vorticity $\omega$, the stream function satisfies:
\begin{equation} -\Delta \psi = \omega. \label{harmvort}\end{equation}
In \cite{ashton2008nfr} the authors derive a non-local formulation for rotational water waves with constant vorticity $\omega=\gamma$, by introducing the pseudo-potential $\varphi$ defined as the harmonic conjugate to the function:
\[ \psi^h = \psi + \tfrac{1}{2} \gamma y^2. \]
which is harmonic by virtue of equation \eqref{harmvort}. The problem can be stated in terms of the harmonic function $\varphi$, and a non-local formulation follows in a similar manner as that seen in \S 1. 

\subsection{Higher Dimensions}
An important generalisation of the problem is to extend it to higher dimensions, three spatial dimensions being the most physically relevant case. We analyse the problem in $m+1$ spatial dimensions, with $m>1$, using the coordinates:
\[ (x_1, \ldots, x_m, y) \in \R ^{m+1} \]
where $y$ corresponds to the vertical direction. The elastic plates are now described by the $m$ dimensional hypersurfaces:
\[ \Gamma_i = \{ (x,y) \in \R ^{m+1}: y=\eta_i (x,t) \}, \] 
where $x=(x_1, \ldots, x_m)$ is the ``horizontal'' coordinate. The mean flow in each $\Omega_i$ is horizontal and can be represented by $(U_i,0)\in \R ^{m+1}$. The scalar differential operator $\partial_U$ needs a suitable modifcation in the higher dimensional setting -- it becomes:
\[ \partial_U = \partial_t + U\cdot \nabla\! _x \]
where $\nabla\! _x =(\partial_{x_1}, \ldots, \partial_{x_m},0)$ is the horizonal gradient operator. In addition, the beam equation for each elastic plate gets modified so that:
\[ \partial_t^2 \eta_i + \Delta_x^2 \eta_i = p_{i-1}-p_i \qquad \textrm{on $\Gamma_i$}, \]
where $\Delta_x$ is the laplacian in the horizontal coordinates. The relevant kinematic boundary conditions are modified in an obvious fashion and by introducing analogues of the $\{\xi_i^\pm\}$, one can express the derivatives of $\phi$ in terms of $\eta$ and $\xi$. Indeed, the $\{\nabla\! _x \phi_i\}$ satisfy the following non-singular equation:
\begin{equation} \nabla\! _x \xi_i^- - (\partial_{U_i} \eta_i) \nabla\! _x \eta_i = (\mathbf{I} + \nabla\! _x \eta_i \otimes \nabla\! _x \eta_i) \nabla\! _x \phi_i \qquad \textrm{on $\Gamma_i$}, \label{highdim} \end{equation}
which comes from eliminating $\partial_y \phi_i$ from the higher dimensional analogues of equations \eqref{chainrules(i)a} and \eqref{kinematic_pair}. From this we can express $\partial_t \phi_i$ and $\partial_y\phi_i$ in terms of $\xi^-_i$ and $\eta_i$ on the surface $\Gamma_i$. The higher dimensional analogue of the identity in lemma \ref{global_lem} is:
\[ \nabla_x \cdot \Big((\partial_y u )\nabla_x v+ (\partial_y v) \nabla_x u\Big) + \partial_y \Big( (\partial_y u)( \partial_y v) - \nabla_x u \cdot \nabla_x v\Big) =0 \]
which holds for all harmonic functions $u,v$ on $\mathbf{R}^{m+1}$. Replacing $v$ by $\exp(\ii k\cdot x \pm \|k\|y)$, with $k\in\R^m$, the governing equations \eqref{alleqns}, \eqref{alleqnsup}, \eqref{alleqnsdown}, in the case of $m+1$ spatial dimensions, can be reformulated in terms of a non-local problem.
\begin{proposition}\label{summary_prop_(n+1)}
The solution to the boundary value problem described by \eqref{alleqns}, \eqref{alleqnsup} and \eqref{alleqnsdown} in the case of $m+1$ spatial dimensions is completely determined by the $3n+2$ functions $\{\eta_i\}_{i=1}^n$, $\{\xi_i^\pm \}_{i=0}^n$ which satisfy:
\begin{itemize}
 \item The $2n-2$ non-local \textbf{internal equations}:
\begin{multline}
 \int e^{-\ii k\cdot x} \Big( \partial_{U_i}\eta_i \sinh (\|k\|\eta_i) + \ii \hat{k}\cdot \nabla\! _x \xi_i^- \cosh (\|k\|\eta_i) \\ - \partial_{U_i} \eta_{i+1} \sinh(\|k\|\eta_{i+1}) - \ii \hat{k}\cdot \nabla\! _x\xi_i^+\cosh(\|k\|\eta_{i+1}) \Big)\dd x =0, \label{summaryint1_(n+1)}\end{multline}
\begin{multline}
 \int e^{-\ii k\cdot x} \Big( \partial_{U_i}\eta_i \cosh (\|k\|\eta_i) + \ii \hat{k}\cdot \nabla\! _x \xi_i^- \sinh (\|k\|\eta_i) \\ - \partial_{U_i} \eta_{i+1} \cosh(\|k\|\eta_{i+1}) - \ii \hat{k}\cdot \nabla\! _x\xi_i^+\sinh(\|k\|\eta_{i+1}) \Big)\dd x =0, \label{summaryint2_(n+1)}
\end{multline}
where $\hat{k}$ is the unit vector in the $k$-direction and $\dd x \equiv \dd x_1 \wedge \cdots \wedge \dd x_m$ is the volume form on $\R ^m$. These are valid for $k\in \R ^m$ and $1\leq i \leq n-1$.
 \item The $4$ non-local \textbf{external equations}:
\begin{subequations}\label{summaryintegrals_(n+1)}
\begin{multline}
 \int e^{-\ii k\cdot x} \Big( \partial_{U_n}\eta_n \sinh (\|k\|\eta_n) + \ii \hat{k}\cdot \nabla\! _x \xi_n^- \cosh (\|k\|\eta_n)  \\  - \ii \hat{k}\cdot \nabla\! _x\xi_n^+\cosh(\|k\|(h_0+h_+)) \Big)\dd x =0, \label{summaryint1-n_(n+1)}\end{multline}
\begin{multline}
 \int e^{-\ii k\cdot x} \Big( \partial_{U_n}\eta_n \cosh (\|k\|\eta_n) + \ii \hat{k}\cdot \nabla\! _x \xi_n^- \sinh (\|k\|\eta_n)  \\  - \ii \hat{k}\cdot \nabla\! _x\xi_n^+\sinh(\|k\|(h_0+h_+)) \Big)\dd x =0, \label{summaryint2-n_(n+1)}\end{multline}
\begin{multline}
 \int e^{-\ii k\cdot x} \Big( \partial_{U_0}\eta_1 \sinh (\|k\|\eta_1) + \ii \hat{k}\cdot \nabla\! _x \xi_0^+ \cosh (\|k\|\eta_1)  \\  - \ii \hat{k}\cdot \nabla\! _x\xi_0^-\cosh(\|k\|(h_0-h_-)) \Big)\dd x =0, \label{summaryint1-0_(n+1)}\end{multline}
\begin{multline}
 \int e^{-\ii k\cdot x} \Big( \partial_{U_0}\eta_1 \cosh (\|k\|\eta_1) + \ii \hat{k}\cdot \nabla\! _x \xi_0^+ \sinh (\|k\|\eta_1)  \\  + \ii \hat{k}\cdot \nabla\! _x\xi_0^-\sinh(\|k\|(h_0-h_-)) \Big)\dd x =0, \label{summaryint2-0_(n+1)}\end{multline}
which are valid for $k\in\R ^m$. 
\end{subequations}
 \item The $n$ \textbf{Bernoulli-type equations}:
\begin{equation} \partial_t^2\eta_i + \Delta_x^2 \eta_i + \partial_{U_{i-1}}\phi_{i-1} + \tfrac{1}{2}\|\nabla\! \phi_{i-1}\|^2   =\partial_{U_i}\phi_i+\tfrac{1}{2} \|\nabla\!  \phi_i\|^2 \qquad \textrm{on $\Gamma_i$}\label{summarybernoulli_(n+1)} \end{equation}
for $1\leq i \leq n$, where $\nabla \phi_i|_{\Gamma_i}$ and $\partial_t \phi_i |_{\Gamma_i}$ etc. are given in terms of $\xi_i^\pm$ via \eqref{highdim} and analogues thereof.
\end{itemize}
\end{proposition}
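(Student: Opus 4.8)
The plan is to re-run the domain-by-domain argument of \S 2 essentially verbatim, tracking the three changes forced by the higher-dimensional geometry. The scalar identity of Lemma~\ref{global_lem} is to be replaced by its stated $(m+1)$-dimensional counterpart, which I would verify the same way: expanding its left-hand side gives $(\partial_y v)\Delta u+(\partial_y u)\Delta v$, which vanishes when $u,v$ are harmonic on $\mathbf{R}^{m+1}$. The separated harmonic test function becomes $v(x,y)=\exp(\ii k\cdot x+\kappa y)$ with $k\in\mathbf{R}^m$ and $\kappa=\pm\|k\|$, still harmonic and bounded on each $\Omega_i$. Finally the scalar chain-rule relations \eqref{chainrules(i)} are to be replaced by the vector identity \eqref{highdim}; since $\mathbf{I}+\nabla_x\eta_i\otimes\nabla_x\eta_i$ is positive definite on $\Gamma_i$ it is invertible (Sherman--Morrison), so \eqref{highdim} genuinely determines $\nabla_x\phi_i|_{\Gamma_i}$ in terms of $\nabla_x\xi_i^-$ and $\nabla_x\eta_i$, and combined with the kinematic condition it also fixes $\partial_y\phi_i|_{\Gamma_i}$. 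With these substitutions installed, the remainder is bookkeeping.

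First I would substitute $u=\phi_i$ and $v=\exp(\ii k\cdot x+\kappa y)$ into the $(m+1)$-dimensional identity to obtain a divergence-free field on $\Omega_i$, namely $e^{\ii k\cdot x+\kappa y}\big(\kappa\nabla_x\phi_i+\ii k\,\partial_y\phi_i,\ \kappa\partial_y\phi_i-\ii k\cdot\nabla_x\phi_i\big)$, the analogue of \eqref{totdiv}. Integrating over an internal $\Omega_i$ and applying the divergence theorem, the contributions at $\|x\|\to\infty$ are discarded via the a priori decay assumption (now imposed in every horizontal direction, uniformly in $y$), leaving surface integrals over $\Gamma_i$ and $\Gamma_{i+1}$. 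On $\Gamma_i$, with upward normal $N(\Gamma_i)=(-\nabla_x\eta_i,1)$, the integrand $F\cdot N(\Gamma_i)$ should collapse to $e^{\ii k\cdot x+\kappa\eta_i}\big[\kappa(\partial_y\phi_i-\nabla_x\phi_i\cdot\nabla_x\eta_i)-\ii k\cdot(\nabla_x\phi_i+(\partial_y\phi_i)\nabla_x\eta_i)\big]$; the higher-dimensional kinematic condition identifies the first parenthesis with $\partial_{U_i}\eta_i$, and \eqref{highdim} together with that same kinematic condition identifies the second with $\nabla_x\xi_i^-$. After writing $k\cdot\nabla_x\xi_i^-=\|k\|\,\hat k\cdot\nabla_x\xi_i^-$ and doing the mirror computation on $\Gamma_{i+1}$, I would obtain the one-parameter family of relations (the analogue of \eqref{globalint}); a harmless relabelling $k\mapsto-k$ restores the kernel $e^{-\ii k\cdot x}$, and forming the half-sum and half-difference of the $\kappa=+\|k\|$ and $\kappa=-\|k\|$ equations would produce the $\sinh$/$\cosh$ pair \eqref{summaryint1_(n+1)}--\eqref{summaryint2_(n+1)}, exactly as \eqref{int1}--\eqref{int2} arose in the plane.

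For the top and bottom domains only the fixed-boundary term changes: on $\mathcal{B}^+$ the Neumann condition and the definition $\xi_n^+=\phi_n|_{\mathcal{B}^+}$ reduce the surface integral to a pure $\hat k\cdot\nabla_x\xi_n^+$ term carrying the weight $e^{\kappa(h_0+h_+)}$ (and symmetrically on $\mathcal{B}^-$), which gives \eqref{summaryint1-n_(n+1)}--\eqref{summaryint2-0_(n+1)} after the same $\kappa=\pm\|k\|$ combination; when $\mathcal{B}^\pm$ are flat these collapse further as in Remarks~\ref{flat_rem1}--\ref{flat_rem2}. The $n$ Bernoulli-type equations \eqref{summarybernoulli_(n+1)} retain their form: I would transcribe \eqref{bernoullitotal} with $\partial_x^4$ replaced by $\Delta_x^2$ and then substitute the boundary values of $\nabla_x\phi_i$, $\partial_y\phi_i$, $\partial_t\phi_i$ (and of the corresponding derivatives of $\phi_{i-1}$) on $\Gamma_i$ obtained from \eqref{highdim}, the kinematic conditions, and the time chain rule for the $\xi_i^\pm$. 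The count is then $2(n-1)+4+n=3n+2$, matching the unknowns $\{\eta_i\}_{i=1}^n$, $\{\xi_i^\pm\}_{i=0}^n$, and that these functions reconstruct each $\phi_i$ follows as in the two-dimensional treatment of the new dependent coordinates. The part I expect to be genuinely delicate is the pair of vector identities ``first parenthesis $=\partial_{U_i}\eta_i$'' and ``second parenthesis $=\nabla_x\xi_i^-$'': the second is the hard one, since it requires combining \eqref{highdim} with the kinematic relation so that the terms $(\nabla_x\eta_i\cdot\nabla_x\phi_i)\nabla_x\eta_i$ and $(\partial_{U_i}\eta_i)\nabla_x\eta_i$ cancel, and it is here that invertibility of $\mathbf{I}+\nabla_x\eta_i\otimes\nabla_x\eta_i$ is implicitly needed. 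A secondary technical point will be pinning down the decay hypotheses precisely enough --- in all $m$ horizontal directions, uniformly in $y$ --- for the divergence theorem to apply and the boundary-at-infinity terms to drop.
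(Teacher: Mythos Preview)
Your proposal is correct and matches the paper's approach: the paper does not give a separate proof of this proposition but simply indicates that one substitutes the $(m+1)$-dimensional divergence identity and the harmonic test function $\exp(\ii k\cdot x\pm\|k\|y)$ into the same domain-by-domain argument as in \S2, and you have spelled out exactly that. One small simplification: the identification of the ``second parenthesis'' $\nabla_x\phi_i+(\partial_y\phi_i)\nabla_x\eta_i$ with $\nabla_x\xi_i^-$ is not the delicate step you anticipate --- it is the bare chain rule $\nabla_x\xi_i^-=\nabla_x\phi_i|_{\Gamma_i}+(\partial_y\phi_i)|_{\Gamma_i}\nabla_x\eta_i$, and \eqref{highdim} is a \emph{consequence} of this together with the kinematic condition rather than a prerequisite for it; the invertibility of $\mathbf{I}+\nabla_x\eta_i\otimes\nabla_x\eta_i$ is only needed later, when one wants to recover $\nabla_x\phi_i|_{\Gamma_i}$ from $\xi_i^-$ and $\eta_i$ for use in the Bernoulli-type equation.
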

Using this result it is straightforward to prove analogous results to those seen in \S 3 and \S 4. We quickly arrive at the following lemma, which is analogous to lemma \ref{matriceslem} in the case of two spatial dimensions.
\begin{lemma}\label{matricesdim}
In $m+1$ spatial dimensions, the linearised equations take the form:
\[ \mathcal{N}_1(k) \partial_t \begin{bmatrix} \varphi \\ \pi \end{bmatrix} = \mathcal{N}_2(k) \begin{bmatrix} \varphi \\ \pi \end{bmatrix}. \]
The matrices $\mathcal{N}_1(k)$ and $\mathcal{N}_2(k)$ take the form
\[ \mathcal{N}_1(k) = \left(\begin{smallmatrix} \mathbf{I}_n & 0 \\ 0 & \mathcal{P}(k)\end{smallmatrix}\right)\qquad \textrm{and}\qquad \mathcal{N}_2(k) = \left(\begin{smallmatrix} 0 & \mathbf{I}_n \\ \mathcal{Q}(k)-\|k\|^4\mathbf{I}_n & -2\ii \mathcal{R}(k)\end{smallmatrix}\right),\]
where $\mathcal{P}(k),\mathcal{Q}(k),\mathcal{R}(k) \in \mathrm{Mat}_n(\mathbf{R})$ are symmetric, tridiagonal and real for $k\in \R^m $. The matrix $\mathcal{P}(k)$ is given by:
\[ \left[\begin{smallmatrix}\tfrac{\|k\|+\coth (\|k\|\Delta h_1) +\coth(\|k\|\Delta h_0)}{\|k\|} & - \tfrac{ \mathrm{csch}(\|k\| \Delta h_1)}{\|k\|}   & \textstyle{\cdots}\\
    -  \tfrac{ \mathrm{csch}(\|k\| \Delta h_1)}{\|k\|} & \tfrac{\|k\|+\coth (\|k\|\Delta h_2)+\coth(\|k\|\Delta h_1)}{\|k\|} & \textstyle{\cdots} \\
\vdots & \vdots & \ddots \end{smallmatrix} \right]. \]
The matrix $\mathcal{Q}(k)$ is given by:
\[ \left[\begin{smallmatrix}  \tfrac{(k\cdot U_0)^2 \coth(\|k\|\Delta h_0)+(k\cdot U_1)^2 \coth (\|k\|\Delta h_1)}{\|k\|} & - \tfrac{(k\cdot U_1)^2 \mathrm{csch}(\|k\|\Delta h_1)}{\|k\|}    & \textstyle{\cdots}\\
    - \tfrac{(k\cdot U_1)^2 \mathrm{csch}(\|k\|\Delta h_1)}{\|k\|} &  \tfrac{(k\cdot U_1)^2 \coth(\|k\|\Delta h_1)+(k\cdot U_2)^2 \coth (\|k\|\Delta h_2)}{\|k\|} & \textstyle{\cdots} \\
 \vdots & \vdots & \ddots \end{smallmatrix} \right]. \]
The matrix $\mathcal{R}(k)$ is given by:
\[ \left[\begin{smallmatrix}  \tfrac{(k\cdot U_0) \coth(\|k\|\Delta h_0)+(k\cdot U_1) \coth (\|k\|\Delta h_1)}{\|k\|} & - \tfrac{(k\cdot U_1) \mathrm{csch}(\|k\|\Delta h_1)}{\|k\|}    & \textstyle{\cdots} \\
    - \tfrac{(k\cdot U_1) \mathrm{csch}(\|k\|\Delta h_1)}{\|k\|} &  \tfrac{(k\cdot U_1) \coth(\|k\|\Delta h_1)+(k\cdot U_2) \coth (\|k\|\Delta h_2)}{\|k\|} & \textstyle{\cdots} \\
 \vdots & \vdots & \ddots \end{smallmatrix} \right]. \]
\end{lemma}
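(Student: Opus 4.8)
The strategy is to repeat, in the $m+1$-dimensional setting, the reduction of \S3 that produced Lemma~\ref{matriceslem}. Starting from the non-local system of Proposition~\ref{summary_prop_(n+1)}, I would linearise about the flat equilibrium $\eta_i\equiv h_i$ with $h_\pm\equiv 0$, writing $\eta_i=h_i+\tilde\eta_i$ and discarding every term quadratic or higher in $\tilde\eta_i$ and $\xi_i^\pm$, and then take the Fourier transform in $x\in\R^m$. The only facts needed at this step are that $\partial_{U_i}$ becomes $\partial_t+\ii\,(k\cdot U_i)$, that the horizontal biharmonic $\Delta_x^2$ becomes $\|k\|^4$, and -- crucially -- that the directional operator $\hat k\cdot\nabla_x$ appearing in \eqref{summaryint1_(n+1)}--\eqref{summaryint2_(n+1)} becomes multiplication by $\ii\,(\hat k\cdot k)=\ii\|k\|$. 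Thus all dependence on the \emph{direction} of $k$ collapses, and the linearised equations depend on $k$ only through the scalar $\|k\|$ and the scalars $k\cdot U_i$. With these substitutions the linearised internal, external and Bernoulli equations are, term by term, the same as \eqref{interlin}, \eqref{exterlin} and \eqref{bernoullilin}, with $k\mapsto\|k\|$ inside every hyperbolic function and in the prefactor of each $\hat\xi_i^\pm$, with $k^4\mapsto\|k\|^4$, and with $U_i\mapsto k\cdot U_i$ wherever $U_i$ entered through $\partial_{U_i}$.

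Next I would carry out the same elimination. Using flatness of $\mathcal B^\pm$ (the higher-dimensional analogues of Remarks~\ref{flat_rem1} and \ref{flat_rem2}) removes $\hat\xi_n^+$ and $\hat\xi_0^-$, leaving $3n$ equations in the $3n$ unknowns. For $1\le i\le n-1$ the two internal equations at index $i$ form a non-singular $2\times 2$ system determining $(\hat\xi_i^-,\hat\xi_i^+)$ in terms of $\hat\eta_i,\hat\eta_{i+1}$ and their $t$-derivatives, while the two external equations determine $\hat\xi_n^-$ from $\hat\eta_n$ and $\hat\xi_0^+$ from $\hat\eta_1$ (these supply the $\coth(\|k\|\Delta h_0)$ and $\coth(\|k\|\Delta h_n)$ corner terms). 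Substituting the resulting expressions for $\hat\xi_{i-1}^+$ and $\hat\xi_i^-$ into the $i$-th Bernoulli equation \eqref{summarybernoulli_(n+1)}, one finds that $(\partial_t+\ii(k\cdot U_i))\hat\xi_i^-$ produces a $\|k\|^{-1}(\partial_t+\ii(k\cdot U_i))^2$ acting on a $\coth/\mathrm{csch}$ combination of the $\hat\eta$'s; expanding the square separates cleanly into a $\partial_t^2$-part, a $\partial_t$-part and a pure part. Collecting these, together with the trivial relations $\dot\varphi=\pi$, gives a first-order system $\mathcal N_1(k)\partial_t(\varphi,\pi)^T=\mathcal N_2(k)(\varphi,\pi)^T$: the $\partial_t^2\hat\eta$-coefficients assemble into the matrix $\mathcal P(k)$ multiplying $\dot\pi$, the pure $\hat\eta$-coefficients give $\mathcal Q(k)-\|k\|^4\mathbf I_n$, and the $\partial_t\hat\eta$-coefficients give $-2\ii\mathcal R(k)$. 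Reading these off reproduces the stated tridiagonal forms: $\mathcal P$ is literally $\mathcal A$ of Lemma~\ref{matriceslem} with $k\mapsto\|k\|$ (the $U_i$ never enter the $\partial_t^2$-part of the elimination), while $\mathcal Q$ and $\mathcal R$ come from $\mathcal B$ and $\mathcal C$ by the substitutions $k\mapsto\|k\|$, $U_i\mapsto k\cdot U_i$ and an overall factor $\|k\|^{-1}$. Symmetry and tridiagonality are manifest from the displayed matrices, and reality for $k\in\R^m$ follows since every entry is built from $\coth$ and $\mathrm{csch}$ of the real numbers $\|k\|\Delta h_i$ and from the real scalars $k\cdot U_i$, $(k\cdot U_i)^2$ and $\|k\|$.

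The computation is essentially mechanical, and the main work is bookkeeping: tracking how the two scalars $\|k\|$ and $k\cdot U_i$ thread through the $2\times 2$ inversions and the substitution into \eqref{summarybernoulli_(n+1)}, and checking the corner entries arising from the external equations. The single conceptual point -- and the only place where the higher dimensionality is actually used -- is the identity $\hat k\cdot k=\|k\|$, which makes the transformed $\xi$-terms independent of the direction of $k$ and collapses the problem onto exactly the scalar structure already treated in two dimensions; once this is observed, the $n\times n$ blocks follow from the formal substitutions above. As in \S2 one must also assume the fields $\{\eta_i,\phi_i\}$ and their derivatives decay fast enough in $|x|$ for the divergence-theorem step underlying Proposition~\ref{summary_prop_(n+1)} to be valid; this is taken as an a priori hypothesis, justified by the same Sobolev estimates used earlier.
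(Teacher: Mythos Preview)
Your proposal is correct and follows essentially the same approach as the paper: the paper does not give an explicit proof of this lemma, treating it as the direct higher-dimensional analogue of Lemma~\ref{matriceslem} obtained by repeating the linearisation and elimination of \S3 on the system of Proposition~\ref{summary_prop_(n+1)}. You have correctly identified the one genuinely new ingredient, the identity $\hat k\cdot k=\|k\|$, which collapses the directional dependence and reduces the computation to the scalar substitutions $k\mapsto\|k\|$ and $kU_i\mapsto k\cdot U_i$ in the two-dimensional derivation.
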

It is clear form of the matrices $\mathcal{P}(k)$, $\mathcal{Q}(k)$ and $\mathcal{R}(k)$ that the results of \S 4 \& 5 will follow in a similar fashion, and as such we simply state the following.
\begin{theorem}[Well-posedness in higher dimensions]
Let the $n$ elastic plates be distributed horiztonally throughout the channel $\Omega \subset \mathbf{R}^{m+1}$. If the initial amplitudes $\eta_0$ and velocities $\partial_t\eta_0$ belong to $H^s_{\dd x}(\R^m)$, with $s\geq2$, then the Cauchy problem is locally well-posed in $L^2_{\dd x}(\R^m)$. If, in addition, there is zero mean flow, so that that $U_i=0$ for $i = 0, \ldots, n$, the problem is corresponding Cauchy problem is globally well-posed.
\end{theorem}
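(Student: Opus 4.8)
The plan is to transcribe, almost verbatim, the two arguments already carried out in two dimensions --- the proof of Theorem~\ref{well-posed} for the local statement and the two-dimensional zero-flow global theorem for the global one --- with Lemma~\ref{matricesdim} playing the role of Lemma~\ref{matriceslem}. By Parseval's theorem the Cauchy problem reduces, in Fourier space, to a pointwise bound on the propagator $\exp(t\mathcal{N})(k)$, where $\mathcal{N}=\mathcal{N}_1^{-1}\mathcal{N}_2$: if $\|\exp(t\mathcal{N})(k)\|\le C_n(T)\langle k\rangle^2$ for $t\in[0,T]$, then $\|(\eta,\partial_t\eta)\|_{L^2}\le C_n(T)\|(\eta_0,\partial_t\eta_0)\|_{H^s}$ for $s\ge2$, and a $t$-independent bound of the same shape gives the global statement.

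First I would show $\mathcal{N}_1(k)$ is invertible for $k\ne0$ and that $\mathcal{N}(k)$ extends continuously across $k=0$. Invertibility follows from $\mathcal{P}(k)\succ0$: running the AM--GM / diagonal-dominance estimate of Lemma~\ref{inverse} on the symmetric tridiagonal matrix $\mathcal{P}(k)$ reduces positivity to the elementary inequality $\tfrac{1}{\|k\|}\big(\|k\|+\coth(\|k\|\Delta h_i)-\mathrm{csch}(\|k\|\Delta h_i)+\coth(\|k\|\Delta h_{i+1})-\mathrm{csch}(\|k\|\Delta h_{i+1})\big)>0$. The only point not already present for $m=1$ is that $\|k\|$ fails to be a smooth coordinate at the origin; here one uses that $\|k\|\coth(\|k\|\Delta h)$ and $\|k\|\,\mathrm{csch}(\|k\|\Delta h)$ are even analytic functions of $\|k\|$, hence power series in $\|k\|^2$, so the apparent $\|k\|^{-2}$ poles in the entries of $\mathcal{P}(k)$ and $\mathcal{Q}(k)$ cancel in the combinations that build $\mathcal{N}(k)$, exactly as the Laurent computation at $k=0$ in Lemma~\ref{analyticprop}. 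Continuity of $\mathcal{N}$ then gives $\sup_{\|k\|\le R}\|\exp(t\mathcal{N})(k)\|\le C(R,T)$.

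Next I would handle $\|k\|$ large. From $\coth x\to1$, $\mathrm{csch}\,x\to0$ and Lemma~\ref{matricesdim}, uniformly in $\hat k=k/\|k\|$ one has $\mathcal{P}(k)\sim\mathbf{I}_n$, $\mathcal{Q}(k)\sim\|k\|\,\mathrm{diag}\big((\hat k\cdot U_{i-1})^2+(\hat k\cdot U_i)^2\big)$ and $\mathcal{R}(k)\sim\mathrm{diag}\big((\hat k\cdot U_{i-1})+(\hat k\cdot U_i)\big)$, the off-diagonal cosecant terms being exponentially small. The characteristic polynomial of $\mathcal{N}(k)$ therefore factors to leading order into quadratics $\lambda^2+2\ii\big((\hat k\cdot U_{i-1})+(\hat k\cdot U_i)\big)\lambda-\|k\|\big((\hat k\cdot U_{i-1})^2+(\hat k\cdot U_i)^2\big)+\|k\|^4$, whose discriminant is $\sim-\|k\|^4$ and hence negative once $\|k\|$ is large, uniformly in $\hat k$ since $|\hat k\cdot U_i|\le|U_i|$; so the roots become purely imaginary. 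Writing $\mathcal{N}=\mathcal{N}_\infty(k)+\mathcal{E}(k)$ with $\sigma(\mathcal{N}_\infty)\subset\ii\R$ and $\|\mathcal{E}(k)\|\to0$, the perturbation estimate \eqref{semigroupbound} together with $\|\exp(t\mathcal{N}_\infty)\|\le n\|k\|^2\sqrt{t}$ (as in Theorem~\ref{well-posed}) gives $\|\exp(t\mathcal{N})(k)\|\le c\,n\,\langle k\rangle^2\sqrt T\,e^{c\epsilon T}$ for $\|k\|$ large, which combined with the previous step proves local well-posedness in $L^2_{\dd x}(\R^m)$.

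For the zero-flow case $U_i=0$ forces $\mathcal{Q}(k)=\mathcal{R}(k)=0$, so $\mathcal{N}(k)=\left(\begin{smallmatrix}0&\mathbf{I}_n\\ -\|k\|^4\mathcal{P}^{-1}(k)&0\end{smallmatrix}\right)$ has eigenvalues $\pm\ii\|k\|^2/\sqrt{\lambda(k)}$, $\lambda(k)\in\sigma(\mathcal{P}(k))$ --- purely imaginary because $\mathcal{P}\succ0$ and simple because $\mathcal{P}$ is a Jacobi matrix --- so $\mathcal{N}(k)$ is diagonalizable with spectral abscissa $0$. Then the bound \eqref{betterbound}, the eigenvector asymptotics $\mathcal{V}(k)\sim\left(\begin{smallmatrix}\ii\|k\|^2\mathbf{I}_n&-\ii\|k\|^2\mathbf{I}_n\\ \mathbf{I}_n&\mathbf{I}_n\end{smallmatrix}\right)$ giving $\kappa(\mathcal{N}(k))=\mathcal{O}(\|k\|^2)$ for $\|k\|$ large, together with continuity on compact sets for $\|k\|$ small, yield $\|\exp(t\mathcal{N})(k)\|\le C_n\langle k\rangle^2$ with $C_n$ independent of $t$, hence global well-posedness. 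I expect the only genuinely new obstacles --- everything else being a mechanical translation of the $m=1$ proofs --- to be the two points already flagged: the removability of the singularity at the single point $k=0$, where $\|k\|$ is no longer a smooth coordinate, and the uniformity in the angular variable $\hat k$ of all the large-$\|k\|$ spectral estimates, which is what makes the asymptotic analysis of Lemma~\ref{analyticprop} and Theorem~\ref{well-posed} carry over without change.
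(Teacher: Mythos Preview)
Your proposal is correct and follows exactly the route the paper intends: the paper itself offers no proof of this theorem, merely remarking before stating it that ``the results of \S 3 and \S 4 will follow in a similar fashion'' from Lemma~\ref{matricesdim}. You have in fact gone further than the paper by isolating the two genuinely new points in higher dimensions --- the removability of the apparent singularity at $k=0$ (where $\|k\|$ is no longer a smooth coordinate, handled via evenness in $\|k\|$) and the uniformity in $\hat k$ of the large-$\|k\|$ spectral asymptotics (handled via $|\hat k\cdot U_i|\le |U_i|$) --- and indicating correctly why neither causes trouble.
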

\begin{corollary}
The linear problem in $m+1$ spatial dimensions is asymptotically stable if and only if there is no mean flow, i.e. $U_i=0$ for $i = 0, \ldots, n$.
\end{corollary}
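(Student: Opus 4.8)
This statement is the $m+1$-dimensional counterpart of Theorem~\ref{stability}, and the plan is to run the proof of that theorem with the matrices $\mathcal{N}_1,\mathcal{N}_2$ of Lemma~\ref{matricesdim} in place of $\mathcal{M}_1,\mathcal{M}_2$. First I would record the analogue of Lemma~\ref{inverse}: since $\mathcal{P}(k)$ has precisely the form of $\mathcal{A}$ with every $k$ replaced by $\|k\|$, the same AM-GM/diagonal-dominance estimate (now reducing to $\|k\|+\tanh(\tfrac{1}{2}\|k\|\Delta h_i)+\tanh(\tfrac{1}{2}\|k\|\Delta h_{i-1})>0$) shows $\mathcal{P}(k)\succ0$ for every $k\in\R^m$, so $\mathcal{N}_1(k)$ is invertible and $\mathcal{M}(k):=\mathcal{N}_1^{-1}(k)\mathcal{N}_2(k)$ is analytic on $\R^m\setminus\{0\}$. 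Setting $\mathcal{G}(x,t;k)=\ii(k\cdot x/t)\mathbf{I}_{2n}+\mathcal{M}(k)$, the Green's function is the $\R^m$-analogue of \eqref{expint}, and asymptotic stability is equivalent to $G(x,t)\to0$ as $t\to\infty$ with $V=x/t$ fixed. Because $\mathcal{R}(k)\in\mathrm{Mat}_n(\R)$, the block form of $\mathcal{M}(k)$ gives $\mathrm{Tr}\,\mathcal{M}(k)=-2\ii\,\mathrm{Tr}(\mathcal{P}^{-1}\mathcal{R})(k)\in\ii\R$, hence $\sum_{\lambda\in\sigma(\mathcal{G})}\mathrm{Re}\,\lambda=0$ for every $k$; so either (a) $\sigma(\mathcal{G})\subset\ii\R$ for all $k$, or (b) some $k$ produces a member of $\sigma(\mathcal{G})$ with strictly positive real part, and in case (b) the leading term of $G$ grows exponentially, so the configuration is unstable.

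For the ``if'' direction, suppose $U_i=0$ for every $i$. Each entry of $\mathcal{Q}(k)$ and $\mathcal{R}(k)$ carries a factor $k\cdot U_i$, so $\mathcal{Q}\equiv\mathcal{R}\equiv0$ and $\mathcal{M}(k)=\left(\begin{smallmatrix}0&\mathbf{I}_n\\-\|k\|^4\mathcal{P}^{-1}(k)&0\end{smallmatrix}\right)$. As $\mathcal{P}(k)$ is a Jacobi matrix it has $n$ distinct positive eigenvalues $\lambda_j(\|k\|)$, so $\mathcal{M}(k)$ has the $2n$ distinct purely imaginary eigenvalues $\pm\ii\|k\|^2/\sqrt{\lambda_j(\|k\|)}$ and is diagonalizable, and the same holds for $\mathcal{G}(x,t;k)$. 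The stationary-phase / Riemann-Lebesgue argument of \S4 then gives $G(x,t)\to0$, so the quiescent configuration is asymptotically stable.

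For the ``only if'' direction, suppose $U_j\neq0$ for some $j$ and, towards a contradiction, assume (a). The key reduction is to restrict to a ray $k=r\hat k$, $r>0$, with $\hat k\in S^{m-1}$ generic: so that $\hat k\cdot U_j\neq0$, and more generally $\hat k\cdot U_i\neq\hat k\cdot U_{i-1}$ whenever $U_i\neq U_{i-1}$ (each such constraint is codimension one, so a full-measure set of directions works). Writing $u_i=\hat k\cdot U_i$, inspection of Lemma~\ref{matricesdim} shows that on this ray $\mathcal{P}(r\hat k),\mathcal{R}(r\hat k),\mathcal{Q}(r\hat k)$ coincide \emph{identically} with the matrices $\mathcal{A},\mathcal{C},k\mathcal{B}$ of Lemma~\ref{matriceslem} under the substitution $k\mapsto r$, $U_i\mapsto u_i$, with $\|k\|^4\mapsto r^4$. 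Hence $\ii\mu\in\sigma(\mathcal{M}(r\hat k))$ if and only if $\det\bigl(\mu^2\mathcal{P}(r\hat k)+2\mu\mathcal{R}(r\hat k)+\mathcal{Q}(r\hat k)-r^4\mathbf{I}_n\bigr)=0$, exactly as in \eqref{detmatrix}, and the discriminant argument behind \eqref{ineq} transfers verbatim: for $r$ small, \eqref{ineq} with each $U_i$ replaced by $u_i$ holds for all $\mu\in\R$, the quadratic matrix pencil is positive definite, its determinant never vanishes for real $\mu$, and $\mathcal{M}(r\hat k)$ has no purely imaginary eigenvalue, contradicting (a). Thus (b) holds and the configuration is unstable, which together with the previous paragraph proves the equivalence.

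The main obstacle is the step that is also the technical heart of Theorem~\ref{stability}: establishing positive definiteness of $\mu^2\mathcal{P}+2\mu\mathcal{R}+\mathcal{Q}-\|k\|^4\mathbf{I}_n$ for small $\|k\|$, that is, the transcription and small-$r$ expansion of \eqref{ineq}. Two points deserve care that were trivial or absent in two dimensions. First, one must verify that a single generic direction $\hat k$ can be chosen to serve \emph{simultaneously} every index $i$ for which $U_i$ or $U_{i-1}$ is non-zero; this is where the vectorial nature of the mean flows enters, and it is the only genuinely new ingredient. Second, in degenerate configurations -- a long run of vanishing $U_i$, or all $U_i$ equal to a common non-zero vector -- the diagonal-dominance bound on the pencil is not by itself conclusive, and one instead counts the roots of the degree-$2n$ real polynomial $\mu\mapsto\det(\mu^2\mathcal{P}+2\mu\mathcal{R}+\mathcal{Q}-\|k\|^4\mathbf{I}_n)$, using the $-\|k\|^4\mathbf{I}_n$ term to drive a conjugate pair off the imaginary axis. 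Beyond these points everything -- analyticity of $\mathcal{M}$ away from the origin, the trace identity, and the decay estimate for $G$ -- is the same bookkeeping as in \S3--\S4.
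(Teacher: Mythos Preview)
Your proposal is correct and follows exactly the route the paper intends: the corollary is stated without proof, the preceding text simply asserting that ``the results of \S3 \& 4 will follow in a similar fashion'' from the matrices of Lemma~\ref{matricesdim}, and your ray reduction $k=r\hat k$, $U_i\mapsto\hat k\cdot U_i$ is precisely how one makes that transference explicit. Your treatment is in fact more careful than the paper's own proof of Theorem~\ref{stability}, since you flag the degenerate configurations (consecutive $u_{i-1}=u_i=0$) where the discriminant bound \eqref{ineq} can fail and an auxiliary argument is needed---an issue already latent in the two-dimensional case.
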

These results are very much similar to that seen in the two dimensional case. The one difference is the conditions needed on the initial data to ensure well-posedness. For example, in the physically interesting case of three dimensions, we only need the initial data to be continuous and drop the need for differentiablity. This is a consequence of the well-known embedding:
\[ H^2_{\dd x} (\mathbf{R}^2) \hookrightarrow C^0_0(\mathbf{R}^2). \]
In higher dimensions, the conditions are even weaker: it is enough for the initial data to belong to $L^{2m/(m-4)}_{\dd x}(\mathbf{R}^m)$ for $m>4$. The case $m=4$ is special, and in this instance the initial data need be in $L^p_{\dd x}(\mathbf{R}^4)$ for each $p\in [1,\infty)$ to ensure well-posedness.

The fact that the linear problem is unstable for non-zero fluid speeds is no great surprise. Again, the fluid carries vertical momentum between the plates, which will be reflected back from the rigid boundary of the channel $\Omega$. If the fluid has non-zero velocity, and the initial data is non-zero, then the fluid can increase the vertical momentum of the plates, and this process is able to continue, allowing instabilities to develop.

\subsection{Periodic Motion}
Problems of physical interest will always involve finite spatial domains, so in certain circumstances treatment of the problem on the infinite line is inappropriate. For this reason, it is important to be able the analyse the case in which the plates are of finite length -- one way of achieving this is by considering periodic motion.

It is well known that for a bounded flows (i.e. flows on compact domains), there is in general only a \emph{countable} infinity of normal modes. As such, one expects the relevant global relations, i.e. the integral equations in Proposition \ref{summary_prop} etc, to be valid for $k$ in some countable subset of $\R $, and this is indeed the case. 

For simplicity assume all functions have period $2\pi$. In the non-local formulation, after constructing a divergence free expression we integrated it over the entire domain $\Omega_i$ (see equation \eqref{divint} and those thereafter). In the periodic case, the appropriate adjustment is to integrate over one cell in $\Omega_i$, of width $2\pi$. In the case of the infinite line, the contribution from the vertical components of $\partial\Omega_i$ vanished due to our assumptions on the decay of the fields. In the periodic case, we only have the fact that all the functions agree on either side of the cell, so to eliminate the corresponding integrals, we must choose $k$ so that: 
\[ \exp(-\ii kx) = \exp(-\ii kx -2\pi \ii k), \]
i.e. we choose $k\in \mathbf{Z}$. The modified version of Proposition \ref{summary_prop} would now involve integral equations valid for $k\in\mathbf{Z}$. Of course, this is sufficient\footnote{With moderate a priori conditions on the relevant function spaces.} to rebuild the functions $\eta_i$ and $\xi^\pm_i$. Indeed, we know from standard Fourier analysis on $L^2_{\mathrm{per}}$ that the functions $\eta_i$, $\xi_i^\pm$ can be reconstructed from their relevant Fourier coefficients, so the global relations, valid for $k\in \mathbf{Z}$, are simply equations for these coefficients. In the case of the infinite line, the global relations can be treated as equations for the Fourier transforms $\hat{\eta}_i$ and $\hat{\xi}_i^\pm$. In fact, the global relations can be viewed as pseudodifferential operators acting on the Fourier transforms, but we shall not elaborate on this point in this paper.

\section{Conclusions}
In this paper we have given a detailed study of the linear problem, involving the motion of a collection of elastic plates driven by different mean flows. In particular, we study the necessary and sufficient conditions needed for asymptotic stability, and also the well-posedness of the associated Cauchy problem. In the latter case, we find sufficient conditions on the initial amplitudes and speeds of the plates so that the solution of the problem changes continuously with the initial data.

Our approach involves a non-local formalism, in which the underlying system of nonlinear equations are converted to a coupled system of 1-parameter integral equatoins. This approach is largely motivated by a new, unified approach to boundary value problems introduced by A.S Fokas in the late 1990's \cite{fokas2008uab}.

In the two dimensional problem, we show that it is enough for the initial data to be continuously differentiable with decay at infinity, for the problem to remain well-posed. We then extended the results so that they are applicable to the problem in higher dimensions. Using some standard Sobolev embedding results, we find that in the case of three spatial dimensions, the need for differentiability can be dropped, and it is sufficient that the initial data be continuous (up to a set of measure zero).

Finally, we study the asymptotic stability of the problem: for which configurations does the Green's function decay to zero after long times, in a frame moving at arbitrary velocity? We find that if there is any mean flow, the problem is asymptotically unstable, although the unstable modes grow very slowly if the channel is large, and the fluid velocities are small. We offer support of this result on physical grounds, based on momentum transfer between the fluid and the plates. In the case in which there is no mean flow, we show that the Green's function decays like $\mathcal{O}(t^{-1/2})$ for large $t$, which follows from analysis of the spectrum of the infinitesimal generator of the semigroup for the Cauchy problem in the Fourier space. We also study the pseudospectrum of this operator, and discover that for long wavelength modes, the operator becomes highly non-normal. This indicates that whilst theoretically the configuration should remain stable for zero mean flow, and instabilities should only develop after long time periods in the case of slow flow, in practice it is likely that these instabilities will develop at a much faster rate.

\newpage
\nocite{whitham1974lan}
\bibliography{ants_bib}
\bibliographystyle{plain}

\end{document}